\newtheorem{theorem}{Theorem}[section]
\newtheorem{corollary}[theorem]{Corollary}
\newtheorem{lemma}[theorem]{Lemma}
\newtheorem{proposition}[theorem]{Proposition}
\theoremstyle{definition}
\newtheorem{definition}[theorem]{Definition}
\newtheorem{remark}[theorem]{Remark}
\newtheorem{question}[theorem]{Question}
\newtheorem{example}[theorem]{Example}
\numberwithin{equation}{section}
\newcommand{\dist}{\text{dist}}
\newcommand{\supp}{\text{supp}}
\newcommand\eps{\varepsilon}
\newcommand\R{\mathbb{R}}
\newcommand\s{{\operatorname{s}}}
\newcommand\g{{\operatorname{g}}}
\newcommand\BL{{\operatorname{BL}}}
\newcommand\BLg{{\operatorname{BL_\g}}}
\newcommand\BLs{{\operatorname{BL_\s}}}
\newcommand\D{{\operatorname{D}}}
\newcommand\Dg{{\operatorname{D_\g}}}
\newcommand\ABL{{\operatorname{ABL}}}
\newcommand\ABLg{{\operatorname{ABL_\g}}}
\newcommand\ABLs{{\operatorname{ABL_\s}}}
\newcommand\entropy{{\operatorname{h}}}
\begin{document}

\title{Adjoint Brascamp--Lieb inequalities}

\author[J. Bennett]{Jonathan Bennett}
\address{School of Mathematics, University of Birmingham\\
Edgbaston \\
Birmingham B15 2TT \\
UK}
\email{j.bennett@bham.ac.uk}

\author[T. Tao]{Terence Tao}
\address{Department of Mathematics, UCLA\\
405 Hilgard Ave\\
Los Angeles CA 90095\\
USA}
\email{tao@math.ucla.edu}
\keywords{Brascamp--Lieb inequalities, Radon-like transforms, Gowers norms}
\subjclass{44A12, 11B30}

\begin{abstract}  The Brascamp--Lieb inequalities are a generalization of the H\"older, Loomis--Whitney, Young, and Finner inequalities that have found many applications in harmonic analysis and elsewhere.  In this paper we introduce an ``adjoint'' version of these inequalities, which can be viewed as an $L^p$ version of the entropic Brascamp--Lieb inequalities of Carlen and Cordero--Erausquin.  As applications, we reprove a log-convexity property of the Gowers uniformity norms, and establish some reverse $L^p$ inequalities for various tomographic transforms.  We conclude with some open questions.
\end{abstract}

\maketitle

\tableofcontents

\section{Introduction}
The Brascamp--Lieb inequalities \cite{BL1976} (also known as the \emph{H\"older--Brascamp--Lieb inequalities}) are a fundamental family of inequalities in analysis, generalizing such classical inequalities as H\"older's inequality, the Loomis--Whitney inequality \cite{Loomis-Whitney}, Young's inequality, and Finner's inequality \cite{finner}.  They have had many recent applications in harmonic analysis, most notably in establishing Vinogradov's mean value conjecture via decoupling inequalities \cite{bdg}; see \cite{zhang} for a recent survey of these developments.  The algorithmic aspects of these inequalities are also related to operator scaling \cite{GGOW2018}, and there are numerous generalizations of the inequalities to other contexts, such as nonlinear inequalities (see e.g., \cite{BBBCF2020}) or to more general quivers \cite{quiver}, as well as a reverse form of the Brascamp--Lieb inequalities due to Barthe \cite{barthe}.  

One can study Brascamp--Lieb inequalities in both continuous and discrete settings:

\begin{definition}[Continuous Brascamp--Lieb constants]\label{cbl-def}  For any surjective linear maps $B_i \colon \R^d \to \R^{d_i}$ between Euclidean spaces and exponents $c_i > 0$ for $i=1,\dots,k$, define the \emph{Brascamp--Lieb constant} $0 < \mathrm{BL}(\mathbf{B}, \mathbf{c}) \leq \infty$ to be the best constant for which one has the \emph{Brascamp--Lieb inequality}
\begin{equation}\label{blc}
 \int_{\R^d} \prod_{i=1}^k f_i^{c_i} \circ B_i \leq \BL(\mathbf{B}, \mathbf{c}) \prod_{i=1}^k \left(\int_{\R^{d_i}} f_i\right)^{c_i}
\end{equation}
for non-negative integrable $f_i \colon \R^{d_i} \to \R$ (where $\mathbf{B} := (B_1,\dots,B_k)$ and $\mathbf{c} := (c_1,\dots,c_k)$).  We refer to the pair $(\mathbf{B}, \mathbf{c})$ as a \emph{(continuous) Brascamp--Lieb datum}.

We define the \emph{gaussian Brascamp--Lieb constant} $\BLg(\mathbf{B}, \mathbf{c})$ similarly, but with the $f_i$ restricted to be (centred) gaussians $f_i(x_i) = C_i e^{-\pi \langle A_i x_i, x_i \rangle}$ for some $C_i>0$ and positive definite $A_i$.
\end{definition}

Of course, the requirement of non-negativity in \eqref{blc} can be dropped by inserting absolute values around the $f_i$ on both sides.

\begin{definition}[Discrete Brascamp--Lieb constants]\label{disc-bl-def}  For any homomorphisms $B_i: G \to G_i$ between discrete abelian groups obeying the non-degeneracy condition that $\bigcap_{i=1}^k \operatorname{ker} B_i$ is finite, and exponents $c_i > 0$ for $i=1,\dots,k$, define the \emph{Brascamp--Lieb constant} $0 < \BL(\mathbf{B}, \mathbf{c}) \leq \infty$ to be the best constant for which one has the \emph{Brascamp--Lieb inequality}
\begin{equation}\label{blcd}
\sum_G \prod_{i=1}^k f_i^{c_i} \circ B_i \leq \BL(\mathbf{B}, \mathbf{c}) \prod_{i=1}^k \Bigl(\sum_{G_i} f_i\Bigr)^{c_i}
\end{equation}
for non-negative absolutely summable $f_i \colon G_i \to \R$, where $\sum_G f$ is shorthand for $\sum_{x \in G} f(x)$. We refer to the pair $(\mathbf{B}, \mathbf{c})$ as a \emph{(discrete) Brascamp--Lieb datum}.

We define the \emph{subgroup Brascamp--Lieb constant} $\BLs(\mathbf{B}, \mathbf{c})$ similarly, but with the $f_i$ restricted to be indicator functions of finite subgroups of $G_i$.
\end{definition}

Note that if $\bigcap_{i=1}^k \operatorname{ker} B_i$ is infinite then $\BL(\mathbf{B}, \mathbf{c})$ will also be infinite, as may be seen by setting all the $f_i$ to be Kronecker delta functions at the origin; the adjoint Brascamp--Lieb constant $\ABLs(\mathbf{B}, \mathbf{c}, \mathbf{\theta}, p)$ defined in Definition \ref{disc-adj} below will similarly be infinite for $0 < p < 1$.  Thus there is little to be lost by imposing the stated non-degeneracy condition.
\begin{example}[H\"older's inequality]\label{Example:Holder}
If $\sum_{i=1}^k c_i=1$ then necessarily $d_i=d$ for all $i$ and the surjections $B_i$ become invertible. The natural changes of variables then convert \eqref{blc} and \eqref{blcd} to classical $k$-linear H\"older inequalities. This yields
$
\BL(\mathbf{B}, \mathbf{c})=\prod_{i=1}^k(\det B_i)^{-c_i}
$ and $\BL(\mathbf{B}, \mathbf{c})=1$ in the continuous and discrete settings respectively.
\end{example} 
\begin{example}[The Loomis--Whitney inequality]\label{Example:LW}
Of particular importance to our applications in Sections \ref{Section:loomis} and \ref{Section:X-ray} is the Loomis--Whitney inequality, which has its origins in \cite{Loomis-Whitney}. This may be formulated in both the continuous and discrete settings. On $\mathbb{R}^d$ it states that
\begin{eqnarray}\label{LWhat}
\begin{aligned}
\int_{\mathbb{R}^d}\prod_{i=1}^d f_i(x_1,\hdots,\widehat{x}_i,\hdots,x_d)^{\frac{1}{d-1}}dx\leq \prod_{i=1}^d\left(\int_{\mathbb{R}^{d-1}}f_i\right)^{\frac{1}{d-1}}
\end{aligned}
\end{eqnarray}
for all measurable functions $f_1,\hdots,f_d:\mathbb{R}^{d-1}\rightarrow\mathbb{R}_+$. Here we use $\widehat{\;\;}$ to denote omission. The underlying linear map $B_ix:=(x_1,\hdots,\widehat{x}_i,\hdots,x_d)$ may be identified with the orthogonal projection $P_{e_i}:\mathbb{R}^d\rightarrow\langle e_i\rangle^\perp$, at which point \eqref{LWhat} becomes the manifestly geometric inequality
\begin{equation*}
\int_{\mathbb{R}^d}\prod_{i=1}^d f_j(P_{e_i}x)^{\frac{1}{d-1}}dx\leq \prod_{i=1}^d\left(\int_{\langle e_i\rangle^\perp}f_i\right)^{\frac{1}{d-1}}
\end{equation*}
for all measurable $f_i:\langle e_i\rangle^\perp\rightarrow\mathbb{R}_+$.
We refer to Section \ref{Section:loomis} for a detailed discussion of the Loomis--Whitney inequality along with its generalization by Finner \cite{finner}.
\end{example}

\begin{example}[Young's convolution inequality on $\mathbb{R}$]\label{Example:Young}
The sharp Young's convolution inequality established in \cite{beckner-rn} and \cite{BL1976} may be interpreted as the Brascamp--Lieb inequality
$$
\int_{\mathbb{R}^{2}}f_1(x)^{c_1}f_2(y)^{c_2}f_3(x-y)^{c_3}dxdy\leq\left(\prod_{i=1}^3\frac{(1-c_i)^{1-c_i}}{c_i^{c_i}}\right)^{\frac{1}{2}}\left(\int_{\mathbb{R}}f_1\right)^{c_1}\left(\int_{\mathbb{R}}f_2\right)^{c_2}\left(\int_{\mathbb{R}}f_3\right)^{c_3}
$$
where $0\leq c_1,c_2,c_3\leq 1$ satisfy $c_1+c_2+c_3=2$.
\end{example}

Of particular importance to the theory of Brascamp--Lieb constants is the celebrated result of Lieb \cite{lieb} (see also \cite{BL1976} for the rank one case), which states that
\begin{equation}\label{lieb-thm}
 \BL(\mathbf{B}, \mathbf{c}) = \BLg(\mathbf{B}, \mathbf{c})
\end{equation}
for any continuous Brascamp--Lieb datum $(\mathbf{B}, \mathbf{c})$.  The discrete analogue
\begin{equation}\label{lieb-thm-disc}
\BL(\mathbf{B}, \mathbf{c}) = \BLs(\mathbf{B}, \mathbf{c})
\end{equation}
of Lieb's theorem for any discrete Brascamp--Lieb datum $(\mathbf{B}, \mathbf{c})$ was obtained by Christ \cite{christ} (see \cite{CDKSY} for further discussion of the torsion-free case, where $\BL(\mathbf{B}, \mathbf{c})$ is equal to $1$ if it is finite).

There are many further useful results on Brascamp--Lieb constants, including characterisations of the data for which they are finite, and for which gaussian extremizers exist; we refer the interested reader to \cite{BCCT2008} and the references there in the first instance. One of the striking features of the theory is the effectiveness of the heat flow monotonicity method (semigroup interpolation) in the context of continuous Brascamp--Lieb inequalities -- see \cite{CLL2004}, \cite{BCCT2008}. As we shall see, the role played by gaussians is rather less fundamental in the setting of the \emph{adjoint} Brascamp--Lieb inequalities that we introduce next.

\subsection{Adjoint Brascamp--Lieb inequalities} 

In this paper we study ``adjoint'' versions of continuous and discrete Brascamp--Lieb inequalities.  To state these adjoint formulations, we need the notion of a pushforward.  If $B \colon G \to H$ is a homomorphism between two discrete abelian groups, we can define the \emph{pushforward} $B_* f \colon H \to \R^+$ of any absolutely summable function $f \colon G \to \R^+$ by the formula
$$ B_* f(h) \coloneqq \sum_{g \in B^{-1}(\{h\})} f(g),$$
or equivalently that
$$ \sum_{h \in H} B_* f(h) F(h) = \sum_{g \in G} f(g) F \circ B(g)$$
for all $F \colon H \to \R^+$.  In a similar vein, if $B \colon \R^d \to \R^{d'}$ is a surjective linear map, we can define the \emph{pushforward} $B_* f \colon \R^{d'} \to \R^+$ of an absolutely integrable function $f \colon \R^d \to \R^+$ by requiring that
\begin{equation}\label{adj-def}
 \int_{\R^{d'}} B_* f(y) F(y)\ dy = \int_{\R^d} f(x) F \circ B(x)\ dx
\end{equation}
for all measurable $F \colon \R^{d'} \to \R^+$.  More explicitly, it follows from a routine change of variables and the Fubini--Tonelli theorem that
\begin{equation}\label{pushform} 
B_* f(y) = \frac{1}{\sqrt{\det(B B^*)}} \int_{B^{-1}(\{y\})} f(x)\ dx
\end{equation}
where the integral on the right-hand side is with respect to surface measure on the $d-d'$-dimensional affine subspace $B^{-1}(\{y\})$ of $\R^d$. We clarify that if $BB^*$ is the identity on $\mathbb{R}^{d'}$ then $B_*f$ may be interpreted as a \emph{marginal} of $f$ with respect to some orthonormal basis of $\mathbb{R}^d$.

We now introduce the following adjoint versions of the continuous and discrete Brascamp--Lieb constants:

\begin{definition}[Continuous adjoint Brascamp--Lieb constants]\label{cts-adj}  For $i=1,\dots,k$, let $B_i: \R^d \to \R^{d_i}$ be surjective linear maps between Euclidean spaces, $\theta_i > 0$, $c_i > 0$, and $0 < p \leq 1$ with $\theta_1+\dots+\theta_k=1$.  Define $0 < p_i \leq 1$ by the formula
\begin{equation}\label{pi-def}
 c_i \left(1-\frac{1}{p}\right) = \theta_i \left(1-\frac{1}{p_i}\right).
\end{equation}
Let $\ABL(\mathbf{B}, \mathbf{c}, \mathbf{\theta}, p)$ denote the best constant such that the \emph{adjoint Brascamp--Lieb inequality}
\begin{equation}\label{abli}
 \| f \|_{L^p(\mathbb{R}^d)} \leq \ABL(\mathbf{B}, \mathbf{c}, \mathbf{\theta}, p) \prod_{i=1}^k \| (B_i)_* f \|_{L^{p_i}(\mathbb{R}^{d_i})}^{\theta_i}
\end{equation}
holds for any non-negative $f \colon \R^d \to \R$.
Define $\ABLg(\mathbf{B}, \mathbf{c}, \mathbf{\theta}, p)$ similarly, but with $f$ restricted to be (centred or uncentred) gaussians.
\end{definition}

\begin{remark}  The condition $\theta_1+\dots+\theta_k=1$ is needed in order for the adjoint Brascamp--Lieb inequality \eqref{abli} to be homogeneous in $f$.  The condition \eqref{pi-def} does not actually constrain the exponents $p, p_i, \theta_i$ appearing in \eqref{abli} (except in the endpoint case $p=1$) since it can be viewed as a formula for the parameters $c_i$; instead, it determines which Brascamp--Lieb inequality the adjoint inequality \eqref{abli} is associated with.  It is irrelevant to the definition of $\ABLg(\mathbf{B}, \mathbf{c}, \mathbf{\theta}, p)$ whether we require $f$ to be a centred gaussian $f(x) = C e^{-\pi \langle A x, x \rangle}$ or an uncentred gaussian $f(x) = C e^{-\pi \langle A (x-x_0), (x-x_0) \rangle}$, since the inequality \eqref{abli} is translation-invariant.
\end{remark}
\begin{remark}\label{Remark:deg}
There are some simple degenerate situations in which \eqref{abli} is an identity, the simplest being when $p=1$. In this case we have $p_i=1$ for all $i$ by \eqref{pi-def}, and since pushforward preserves $L^1$ norms, we have $\ABLg(\mathbf{B}, \mathbf{c}, \mathbf{\theta}, p)=1$ and identical equality in \eqref{abli}. Similarly, if $\mathbf{c}=\mathbf{\theta}$, then $p_i=p$ for all $i$ by \eqref{pi-def}, and since $B_j$ is invertible (see Example \ref{Example:Holder}), we have $(B_i)_*f=(\det B_i)^{-1}f$ for each $i$ by \eqref{pushform}. This also yields identical equality in \eqref{abli} with the appropriate choice of constant $\ABL(\mathbf{B}, \mathbf{c}, \mathbf{\theta}, p)$. In all other cases \eqref{abli} ceases to be an identity.
\end{remark}
\begin{remark}\label{Remark:p<1} In general one cannot expect  to be able to bound $\|f\|_p$ from above in terms of (nontrivial) pushforwards $(B_i)_{*}f$ for any $p>1$.
For example, suppose that $d_i<d$ for all $j$, and select a codimension-1 subspace $V$ of $\mathbb{R}^d$ that is transverse to $\ker B_i$ (i.e. does not contain $\ker B_i$) for every $i$, and let $f(x)=\dist(x,V)^{-1/p}$ on the unit ball, and zero elsewhere. Evidently $f\not\in L^p$, while if $p>1$ then $(B_i)_{*}f$ is bounded and compactly supported. In particular, $(B_i)_{*}f\in L^q$ for any $q$. As may be expected, if $p>1$ a \emph{reverse form} of the adjoint Brascamp--Lieb inequality \eqref{abli} is available -- see Section \ref{adj-sec}.
\end{remark}

\begin{definition}[Discrete adjoint Brascamp--Lieb constants]\label{disc-adj}  For $i=1,\dots,k$, let $B_i: G \to G_i$ be homomorphisms between discrete abelian groups with $\bigcap_{i=1}^k \operatorname{ker} B_i$ finite, $\theta_i > 0$, $c_i > 0$, and $0 < p \leq 1$ with $\theta_1+\dots+\theta_k=1$.  Define $p_i$ by the formula \eqref{pi-def}.
Let $\ABL(\mathbf{B}, \mathbf{c}, \mathbf{\theta}, p)$ denote the best constant such that the \emph{adjoint Brascamp--Lieb inequality}
$$ \| f \|_{\ell^p(G)} \leq \ABL(\mathbf{B}, \mathbf{c}, \mathbf{\theta}, p) \prod_{i=1}^k \| (B_i)_* f \|_{\ell^{p_i}(G_i)}^{\theta_i}$$
holds for any non-negative measurable $f \colon \R^d \to \R$.
Define $\ABLs(\mathbf{B}, \mathbf{c}, \mathbf{\theta}, p)$ similarly, but with $f$ restricted to be the indicator function of a finite subgroup $H$ of $G$.
\end{definition}

We now give our main results relating adjoint Brascamp--Lieb constants to forward Brascamp--Lieb constants.

\begin{theorem}[Main theorem, continuous case]\label{main-cts-thm}  Let $B_i, \theta_i, c_i, p, d, d_i, p_i$ for $i=1,\dots,k$ be as in Definition \ref{cts-adj}.  Then
$$
c(\mathbf{c}, \theta, \mathbf{d}, p) 
\BL(\mathbf{B}, \mathbf{c})^{\frac{1}{p}-1} = \ABLg(\mathbf{B}, \mathbf{c}, \mathbf{\theta}, p)\leq \ABL(\mathbf{B}, \mathbf{c}, \mathbf{\theta}, p) \leq \BL(\mathbf{B}, \mathbf{c})^{\frac{1}{p}-1}$$
where $\mathbf{d} \coloneqq (d,d_1,\dots,d_k)$ and
\begin{equation}\label{const} c(\mathbf{c}, \theta, \mathbf{d}, p) \coloneqq p^{-\frac{d}{2p}} \prod_{i=1}^k p_i^{\frac{\theta_i d_i}{2p_i}}.
\end{equation}
Here we adopt the convention that $0^0 = \infty^0 = 1$.
\end{theorem}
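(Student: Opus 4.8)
The plan is to prove the three relations separately. The middle inequality $\ABLg(\mathbf{B},\mathbf{c},\theta,p)\le\ABL(\mathbf{B},\mathbf{c},\theta,p)$ is immediate, since gaussians form a subclass of the non-negative functions over which $\ABL$ is defined. So the substance lies in the outer upper bound $\ABL(\mathbf{B},\mathbf{c},\theta,p)\le\BL(\mathbf{B},\mathbf{c})^{1/p-1}$ and the outer identity $\ABLg(\mathbf{B},\mathbf{c},\theta,p)=c(\mathbf{c},\theta,\mathbf{d},p)\BL(\mathbf{B},\mathbf{c})^{1/p-1}$. Throughout one may assume $\BL(\mathbf{B},\mathbf{c})<\infty$ (otherwise the upper bound is vacuous, and the gaussian computation below returns $\ABLg=\infty$ too) and, by the homogeneity noted after Definition \ref{cts-adj}, that $f$ is a probability density on $\R^d$, so that each $(B_i)_*f$ is a probability density on $\R^{d_i}$. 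The endpoint $p=1$ is the first degenerate case of Remark \ref{Remark:deg}, where all $p_i=1$ and both sides equal $1$; assume henceforth $0<p<1$.

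For the upper bound I would argue by duality against the forward inequality \eqref{blc}. Recall that for $0<r\le1$ and non-negative $g$ one has $\|g\|_{L^r}=\min\{\int gh:\ h\ge0,\ \int h^{r'}\le1\}$ with $r'\coloneqq r/(r-1)\le0$, the minimum being attained at $h$ proportional to $g^{r-1}$; this is just Hölder applied to $g^r=(gh)^rh^{-r}$. Applying this with $r=p_i$ and $g=(B_i)_*f$, choose for each $i$ a non-negative $h_i$ on $\R^{d_i}$ with $\int h_i^{p_i'}\le1$ and $\int f\,(h_i\circ B_i)=\int(B_i)_*f\cdot h_i=\|(B_i)_*f\|_{L^{p_i}}$; explicitly $h_i=\|(B_i)_*f\|_{L^{p_i}}^{1-p_i}\,((B_i)_*f)^{p_i-1}$. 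Now set $h\coloneqq\BL(\mathbf{B},\mathbf{c})^{1/p-1}\prod_{i=1}^k(h_i\circ B_i)^{\theta_i}$. On one hand, since $\sum_i\theta_i=1$, Hölder with exponents $1/\theta_i$ gives
$$\int f h\ \le\ \BL(\mathbf{B},\mathbf{c})^{\frac1p-1}\prod_{i=1}^k\Bigl(\int f\,(h_i\circ B_i)\Bigr)^{\theta_i}=\BL(\mathbf{B},\mathbf{c})^{\frac1p-1}\prod_{i=1}^k\|(B_i)_*f\|_{L^{p_i}}^{\theta_i}.$$
On the other hand, $h$ is admissible for the dual problem on $\R^d$ at exponent $p$, i.e. $\int h^{p'}\le1$: expanding $h^{p'}$ and substituting the formula for $h_i$, the relation \eqref{pi-def} is precisely what forces the exponent of $((B_i)_*f)\circ B_i$ appearing in $h^{p'}$ to equal $c_ip_i$, so that $\int\prod_i\bigl(((B_i)_*f)\circ B_i\bigr)^{\,c_ip_i}=\int\prod_i\bigl(((B_i)_*f)^{p_i}\circ B_i\bigr)^{c_i}$, to which \eqref{blc} applies (with $f_i=((B_i)_*f)^{p_i}$) and yields at most $\BL(\mathbf{B},\mathbf{c})\prod_i\|(B_i)_*f\|_{L^{p_i}}^{c_ip_i}$; the normalising constants of the $h_i$ contribute exactly $\prod_i\|(B_i)_*f\|_{L^{p_i}}^{-c_ip_i}$, and the prefactor is $\BL(\mathbf{B},\mathbf{c})^{p'(1/p-1)}=\BL(\mathbf{B},\mathbf{c})^{-1}$, so everything cancels to leave $\int h^{p'}\le1$. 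Since $p'<0$, the reverse Hölder inequality then gives $\int fh\ge\|f\|_{L^p}\|h\|_{L^{p'}}\ge\|f\|_{L^p}$ because $\|h\|_{L^{p'}}=(\int h^{p'})^{1/p'}\ge1$, and combining with the previous display completes the upper bound. (The measure-theoretic points — $h_i$ infinite where $(B_i)_*f=0$, and the cases $\|(B_i)_*f\|_{L^{p_i}}\in\{0,\infty\}$ — are handled by a routine truncation of $f$.)

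For the identity, specialise to a gaussian $f(x)=Ce^{-\pi\langle Ax,x\rangle}$, $A>0$. Elementary gaussian integrals give $\|f\|_{L^p}=C\,p^{-d/(2p)}(\det A)^{-1/(2p)}$, while $(B_i)_*f$ is again a gaussian, with matrix $A_i=(B_iA^{-1}B_i^*)^{-1}$ and the same total mass as $f$, whence $\|(B_i)_*f\|_{L^{p_i}}=C(\det A)^{-1/2}(\det A_i)^{\frac12-\frac1{2p_i}}p_i^{-d_i/(2p_i)}$. Forming the ratio in \eqref{abli} the amplitude $C$ cancels since $\sum_i\theta_i=1$, and using \eqref{pi-def} in the form $\theta_i(1-1/p_i)=c_i(1-1/p)$ one obtains
$$\frac{\|f\|_{L^p}}{\prod_{i=1}^k\|(B_i)_*f\|_{L^{p_i}}^{\theta_i}}=c(\mathbf{c},\theta,\mathbf{d},p)\Bigl((\det A)^{1/2}\prod_{i=1}^k(\det B_iA^{-1}B_i^*)^{c_i/2}\Bigr)^{1-\frac1p}.$$
Taking the supremum over $A>0$: since $1-1/p\le0$ the bracket is to be minimised, and writing $M=A^{-1}$ this minimum equals $\bigl(\sup_{M>0}(\det M)^{1/2}\prod_i(\det B_iMB_i^*)^{-c_i/2}\bigr)^{-1}$. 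It remains to recognise the inner supremum as $\BLg(\mathbf{B},\mathbf{c})$ — the classical ``dual'' description of the gaussian Brascamp--Lieb constant. One inequality follows by taking near-extremal gaussian inputs (with matrices $A_i$) for \eqref{blc}, putting $M=(\sum_i c_iB_i^*A_iB_i)^{-1}$, and invoking the geometric determinant inequality $\prod_i\det(C_iC_i^*)^{c_i}\le1$ whenever $\sum_i c_iC_i^*C_i$ is the identity; the reverse follows by taking $A_i=(B_iMB_i^*)^{-1}$ and using $\det(\sum_i c_iP_i)\le1$ for orthogonal projections $P_i$ of ranks $d_i$ with $\sum_i c_id_i=d$ (AM--GM on eigenvalues, the trace being $d$), both sides being $+\infty$ if the scaling identity $\sum_i c_id_i=d$ fails. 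Finally Lieb's theorem \eqref{lieb-thm} gives $\BLg(\mathbf{B},\mathbf{c})=\BL(\mathbf{B},\mathbf{c})$, so $\ABLg(\mathbf{B},\mathbf{c},\theta,p)=c(\mathbf{c},\theta,\mathbf{d},p)\BLg(\mathbf{B},\mathbf{c})^{1/p-1}=c(\mathbf{c},\theta,\mathbf{d},p)\BL(\mathbf{B},\mathbf{c})^{1/p-1}$.

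The main obstacle is the exponent bookkeeping in the upper bound: one must verify that the single constraint \eqref{pi-def} \emph{simultaneously} makes the Hölder step and the admissibility step $\int h^{p'}\le1$ close, and that they close with precisely the constant $\BL(\mathbf{B},\mathbf{c})^{1/p-1}$ rather than something weaker. Once this is arranged, the upper bound is robust and in particular makes no reference to gaussians, while the identity is comparatively soft, relying only on explicit gaussian computations, the classical dual formula for $\BLg(\mathbf{B},\mathbf{c})$, and Lieb's theorem. This structure is consistent with the fact that the factor $c(\mathbf{c},\theta,\mathbf{d},p)$ is in general strictly less than $1$, so that $\ABLg(\mathbf{B},\mathbf{c},\theta,p)$ can lie strictly below the upper bound $\BL(\mathbf{B},\mathbf{c})^{1/p-1}$; tracking the equality cases of the two Hölder-type steps above indicates that gaussians are extremal for $\ABL$ exactly when the forward datum $(\mathbf{B},\mathbf{c})$ admits gaussian extremisers.
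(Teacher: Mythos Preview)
Your proof is correct and follows essentially the same route as the paper. Your upper bound repackages the paper's Section~\ref{adj-sec} argument via the dual characterisation of $L^p$ for $p<1$: the paper writes $f^p$ as a product and applies a single $(k{+}1)$-linear H\"older step (exponents $\theta_1p,\dots,\theta_kp,1-p$) together with the forward inequality \eqref{blc} applied to $f_i=((B_i)_*f)^{p_i}$, whereas you build the dual witness $h$ and verify $\int h^{p'}\le1$; unwinding, both use \eqref{pi-def} and \eqref{blc} in exactly the same way. Your gaussian computation and the determinant identity it reduces to are the paper's \eqref{ag}--\eqref{ai}, and your AM--GM sketch for the two directions of \eqref{ai} is precisely the paper's proof.

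One caution on your closing remark: the claim that ``gaussians are extremal for $\ABL$ exactly when the forward datum admits gaussian extremisers'' is not supported by the argument and is in fact false. The paper's Theorem~\ref{ABLg<ABL} shows $\ABL(\mathbf{B},\mathbf{c},\theta,p)>\ABLg(\mathbf{B},\mathbf{c},\theta,p)$ strictly whenever $0<p<1$ and $\mathbf{c}\ne\theta$, regardless of whether $(\mathbf{B},\mathbf{c})$ has gaussian extremisers. In your argument, equality in the forward H\"older step $\int fh\le\prod_i(\int f\,h_i\circ B_i)^{\theta_i}$ would force the functions $f\cdot(h_i\circ B_i)$ to be mutually proportional, and simultaneously equality in the reverse H\"older step forces $h$ proportional to $f^{p-1}$; for a gaussian $f$ these constraints are generically incompatible. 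This does not affect your proof of the theorem itself.
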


\begin{remark}\label{Remark:iso}
It is possible to pass from the adjoint Brascamp--Lieb inequality
\begin{equation}\label{pto0}
 \| f \|_{L^p(\R^d)} \leq \BL(\mathbf{B}, \mathbf{c})^{\frac{1}{p}-1} \prod_{i=1}^k \| (B_i)_* f \|_{L^{p_i}(\R^{d_i})}^{\theta_i}
\end{equation}
of Theorem \ref{main-cts-thm} back to the Brascamp--Lieb inequality \eqref{blc} by taking a limit as $p\rightarrow 0$. To see this let $f$ be the indicator function of a measurable set $\Omega$ and observe that $\supp((B_i)_*f)\subseteq B_i\Omega$. Letting $p\rightarrow 0$, which forces $p_i\rightarrow 0$ by \eqref{pi-def}, the adjoint inequality \eqref{pto0} becomes
\begin{equation}\label{isop}
|\Omega|\leq \BL(\mathbf{B},\mathbf{c})\prod_{i=1}^k|B_i\Omega|^{c_i}.
\end{equation}
This is simply the Brascamp--Lieb inequality \eqref{blc} applied with $f_i=1_{B_i\Omega}$, which is easily seen to be equivalent to \eqref{blc} for general indicator functions $f_1,\hdots,f_k$. Finally, an application of the tensor power trick (see \cite{BCW2005}, or Section \ref{Section:entropy} for a similar argument) allows one to upgrade \eqref{blc} from indicator functions to general functions. We remark that since pushforwards of indicator functions are typically far from being indicator functions themselves, the adjoint Brascamp--Lieb inequality \eqref{pto0} is not readily interpreted as a functional form of \eqref{isop} for any $p>0$. It is interesting to contrast \eqref{pto0} with the functional forms of \eqref{isop} in \cite[Section 4]{ABBC2021}.
\end{remark}
\begin{remark}\label{Remark:Renyi}
A rather different limiting argument (as $p\rightarrow 1$ now) provides an alternative passage from \eqref{pto0} back to \eqref{blc} via the entropic Brascamp--Lieb inequalities (or generalized entropy subadditivities) of Carlen and Cordero--Erausquin \cite{CCE2009}; see Section \ref{Section:entropy}. It was pointed out to us by Tom Courtade, and independently by Shohei Nakamura and Hiroshi Tsuji, that the adjoint Brascamp--Lieb inequalities \eqref{abli} actually have an entropic formulation for every $p<1$. Specifically, on taking logarithms and applying \eqref{pi-def}, the adjoint Brascamp--Lieb inequalities \eqref{abli} become
\begin{equation}\label{renyi}
\entropy_p(f)\leq\sum_{i=1}^kc_i\entropy_{p_i}((B_i)_*f)+\frac{p}{1-p}\log\ABL(\mathbf{B},\mathbf{c},\mathbf{\theta}, p))
\end{equation}
where $\entropy_p(f)\coloneqq\frac{p}{1-p}\log\|f\|_p$ is the \emph{R\'enyi entropy of order $p$}. Theorem \ref{main-cts-thm} provides the uniform bound $\frac{p}{1-p}\log\ABL(\mathbf{B},\mathbf{c},\mathbf{\theta}, p))\leq \log\BL(\mathbf{B},\mathbf{c})$, from which the entropic Brascamp--Lieb inequalities of Carlen and Cordero--Erausquin are seen to follow in the $p=1$ limit. We refer to \cite{MM2019} and the references there for some related inequalities involving the R\'enyi entropy. Similar remarks may be made in the discrete setting and are left to the interested reader.
\end{remark}

\begin{remark}\label{Remark:degenerate case}
An analysis of the expression \eqref{const} reveals that $c(\mathbf{c}, \theta, \mathbf{d}, p)=1$ if and only if $p=1$ or $\mathbf{c}=\theta$. As discussed in Remark \ref{Remark:deg}, in these degenerate cases \eqref{abli} is an identity, and so evidently
$\ABLg(\mathbf{B}, \mathbf{c}, \mathbf{\theta}, p)= \ABL(\mathbf{B}, \mathbf{c}, \mathbf{\theta}, p).$ As we shall see in Section \ref{Section:ablg-abl}, there is strict inequality here in all other cases. This is in stark contrast with Lieb's theorem, which states that $\BLg(\mathbf{B},\mathbf{c})=\BL(\mathbf{B},\mathbf{c})$ for all data.
\end{remark}

\begin{theorem}[Main theorem, discrete case]\label{main-disc-thm}  Let $B_i, \theta_i, c_i, p, G, G_i, p_i$ for $i=1,\dots,k$ be as in Definition \ref{disc-adj}. Then
$$
\ABLs(\mathbf{B}, \mathbf{c}, \mathbf{\theta}, p) = \ABL(\mathbf{B}, \mathbf{c}, \mathbf{\theta}, p) = \BL(\mathbf{B}, \mathbf{c})^{\frac{1}{p}-1}.$$
\end{theorem}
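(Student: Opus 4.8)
The plan is to establish the chain
$$\BL(\mathbf{B},\mathbf{c})^{\frac1p-1}\le\ABLs(\mathbf{B},\mathbf{c},\mathbf{\theta},p)\le\ABL(\mathbf{B},\mathbf{c},\mathbf{\theta},p)\le\BL(\mathbf{B},\mathbf{c})^{\frac1p-1},$$
which forces all three to coincide. The middle inequality is immediate, since indicator functions of finite subgroups of $G$ form a subclass of the non-negative functions on $G$; and by Christ's theorem \eqref{lieb-thm-disc} we may freely interchange $\BLs(\mathbf{B},\mathbf{c})$ and $\BL(\mathbf{B},\mathbf{c})$. So it remains to prove the outer two bounds. (If $\BL(\mathbf{B},\mathbf{c})=\infty$ the lower bound below already makes all three quantities infinite, so assume $\BL(\mathbf{B},\mathbf{c})<\infty$.)

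For the lower bound I would test the adjoint inequality against $f=1_H$, with $H$ ranging over the finite subgroups of $G$. Writing $K_i:=H\cap\ker B_i$, a direct computation gives $(B_i)_*1_H=|K_i|\,1_{B_i(H)}$, and since $|B_i(H)|=|H|/|K_i|$ we get $\|1_H\|_{\ell^p(G)}=|H|^{1/p}$ and $\|(B_i)_*1_H\|_{\ell^{p_i}(G_i)}=|H|^{1/p_i}|K_i|^{1-1/p_i}$. Substituting into \eqref{pi-def} (used in the forms $\theta_i(1-1/p_i)=c_i(1-1/p)$ and $\sum_i\theta_i/p_i=1-\sum_ic_i+\frac1p\sum_ic_i$), the ratio of the two sides of the adjoint inequality collapses to $\bigl(|H|^{1-\sum_ic_i}\prod_i|K_i|^{c_i}\bigr)^{\frac1p-1}$. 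To finish, recall that $\BLs(\mathbf{B},\mathbf{c})=\sup\,|\bigcap_iB_i^{-1}(L_i)|\big/\prod_i|L_i|^{c_i}$, the supremum over finite subgroups $L_i\le G_i$. Choosing near-optimal $L_i$ and setting $H:=\bigcap_iB_i^{-1}(L_i)$ — a finite subgroup of $G$, by the non-degeneracy hypothesis — the inclusion $B_i(H)\subseteq L_i$ gives $|K_i|\ge|H|/|L_i|$, hence $|H|^{1-\sum_ic_i}\prod_i|K_i|^{c_i}\ge|\bigcap_iB_i^{-1}(L_i)|\big/\prod_i|L_i|^{c_i}$, which approaches $\BLs(\mathbf{B},\mathbf{c})=\BL(\mathbf{B},\mathbf{c})$. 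Since $\frac1p-1>0$, this yields $\ABLs(\mathbf{B},\mathbf{c},\mathbf{\theta},p)\ge\BL(\mathbf{B},\mathbf{c})^{\frac1p-1}$.

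The upper bound $\ABL\le\BL(\mathbf{B},\mathbf{c})^{\frac1p-1}$ is the substantive part, and I would argue as for the continuous inequality \eqref{pto0}. A convenient discrete entry point is the elementary observation that $B_i\{f>t\}\subseteq\{(B_i)_*f>t\}$ for every $t>0$ (a single fibre point already contributes its full value to the pushforward), so that \eqref{blcd} applied to the indicators of these sets gives $|\{f>t\}|\le\BL(\mathbf{B},\mathbf{c})\prod_i|\{(B_i)_*f>t\}|^{c_i}$ for all $t$; integrating against $p\,t^{p-1}\,dt$ and using the layer-cake identity $\|g\|_{\ell^q}^q=q\int_0^\infty t^{q-1}|\{g>t\}|\,dt$ then reduces matters to controlling $\int_0^\infty t^{p-1}\prod_i g_i(t)^{c_i}\,dt$ in terms of the quantities $\int_0^\infty t^{p_i-1}g_i(t)\,dt$, for the monotone distribution functions $g_i(t)=|\{(B_i)_*f>t\}|$. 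Alternatively, one can work from the multilinear form $\sum_G\prod_i(h_i\circ B_i)\le\BL(\mathbf{B},\mathbf{c})\prod_i\|h_i\|_{\ell^{1/c_i}(G_i)}$ of \eqref{blcd} by taking $h_i$ to be suitable powers of $(B_i)_*f$ and exploiting $((B_i)_*f)\circ B_i\ge f$ pointwise.

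The main obstacle, in either route, is the matching of exponents. A single application of the forward inequality most naturally produces the power $\BL(\mathbf{B},\mathbf{c})^1$ together with norms $\|(B_i)_*f\|_{\ell^{s_i}}$ constrained only by $\sum_is_ic_i=p$, whereas the target requires the power $\BL(\mathbf{B},\mathbf{c})^{1-p}$ and the specific exponents $p_i$ of \eqref{pi-def}, for which $\sum_ip_ic_i=p$ fails outside the degenerate cases of Remark \ref{Remark:degenerate case}. Closing this gap — by combining the forward-Brascamp--Lieb bound with the nesting properties of the $\ell^r$-norms on $G$ and the $G_i$, and (in the layer-cake formulation) by tracking the common value $\int_0^\infty g_i(t)\,dt=\|f\|_{\ell^1}$ shared by all the distribution functions — is where the real work lies; it is also the place where the discrete theory turns out cleaner than the continuous one, as no analogue of the correction factor $c(\mathbf{c},\theta,\mathbf{d},p)$ of Theorem \ref{main-cts-thm} survives. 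Assembling the lower bound, the trivial inequality $\ABLs\le\ABL$, and this upper bound then gives the asserted triple equality.
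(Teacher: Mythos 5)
Your chain of inequalities is the right skeleton, and your lower bound is correct and essentially the paper's own argument: testing against $f=1_H$, using $(B_i)_*1_H=|K_i|1_{B_iH}$ and \eqref{pi-def} to collapse the ratio to $\bigl(|H|^{1-\sum_ic_i}\prod_i|K_i|^{c_i}\bigr)^{\frac1p-1}$, and then taking $H=\bigcap_iB_i^{-1}(L_i)$ for near-extremal subgroups $L_i$ recovers $\BLs(\mathbf{B},\mathbf{c})^{\frac1p-1}=\BL(\mathbf{B},\mathbf{c})^{\frac1p-1}$ via Christ's theorem. The middle inequality is indeed trivial.

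However, the upper bound $\ABL(\mathbf{B},\mathbf{c},\mathbf{\theta},p)\le\BL(\mathbf{B},\mathbf{c})^{\frac1p-1}$ is not proved: you sketch two entry points (layer-cake on superlevel sets, or the multilinear form with powers of the pushforwards), correctly diagnose that a single application of the forward inequality produces the wrong power of $\BL$ and the wrong Lebesgue exponents, and then explicitly leave ``the real work'' of closing that gap undone. That gap is the entire content of the upper bound, and neither of your sketched routes closes it as stated (the layer-cake route, for instance, would require a sharp inequality for the monotone distribution functions that reinstates a factor $\BL^{-p}$, which is not available from rearrangement alone). The missing idea, which is how the paper proceeds in Section \ref{adj-sec}, is a $(k+1)$-linear H\"older interpolation with weights $\theta_1p,\dots,\theta_kp,1-p$ (summing to $1$): setting $f_i:=(B_i)_*f$ and $g_i:=f_i^{c_ip_i}$, one has the $k$ exact identities $\sum_G f\cdot(g_i\circ B_i)^{-\frac{1-p}{\theta_ip}}=\|f_i\|_{\ell^{p_i}}^{p_i}$ (a consequence of \eqref{pi-def} and the defining property of the pushforward), together with the single forward bound $\sum_G\prod_ig_i\circ B_i\le\BL(\mathbf{B},\mathbf{c})\prod_i\|f_i\|_{\ell^{p_i}}^{c_ip_i}$; the choice of $g_i$ is rigged so that the weighted geometric mean of the $k+1$ integrands is exactly $f^p$, and the exponent bookkeeping $\theta_ipp_i+(1-p)c_ip_i=\theta_ip$ then delivers $\|f\|_{\ell^p}^p\le\BL(\mathbf{B},\mathbf{c})^{1-p}\prod_i\|f_i\|_{\ell^{p_i}}^{\theta_ip}$. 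Without this (or an equivalent mechanism) your argument establishes only the lower bound $\ABLs\ge\BL^{\frac1p-1}$ and not the asserted equalities.
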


Combining Theorem \ref{main-cts-thm} with Lieb's theorem \eqref{lieb-thm} and the finiteness theorem from \cite[Theorem 1.13]{BCCT2008} (or \cite[Theorem 2.1]{BCCT2010}) we conclude

\begin{corollary}[Finiteness criterion, continuous case]  Let $(\mathbf{B}, \mathbf{c})$ be a continuous Brascamp--Lieb datum, let $0 < p < 1$, and let $\theta_1,\dots,\theta_k>0$ be real numbers summing to $1$. Then the following are equivalent:
\begin{itemize}
\item[(i)]  $\BL(\mathbf{B}, \mathbf{c})$ is finite.
\item[(ii)]  $\BLg(\mathbf{B}, \mathbf{c})$ is finite.
\item[(iii)] $\ABL(\mathbf{B}, \mathbf{c}, \mathbf{\theta}, p)$ is finite.
\item[(iv)] $\ABLg(\mathbf{B}, \mathbf{c}, \mathbf{\theta}, p)$ is finite.
\item[(v)]  One has
$$ \dim(V) \leq \sum_{i=1}^k c_i \dim( B_i V ),$$
for every subspace $V$ of $\R^d$, with equality when $V=\R^d$.
\end{itemize}
\end{corollary}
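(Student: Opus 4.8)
The plan is to obtain the five-fold equivalence by chaining together three already-available inputs: Lieb's theorem \eqref{lieb-thm}, the two-sided bound of Theorem \ref{main-cts-thm}, and the finiteness criterion of \cite[Theorem 1.13]{BCCT2008} (equivalently \cite[Theorem 2.1]{BCCT2010}). All the analytic content sits in those results; the task here is the bookkeeping needed to check that the multiplicative constants relating the various quantities are finite and strictly positive, so that the property of being finite transfers unambiguously between them.

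First I would note that $\BL(\mathbf{B},\mathbf{c}) > 0$ always, and that $\frac1p - 1 > 0$ since $0 < p < 1$; hence $\BL(\mathbf{B},\mathbf{c})$ is finite if and only if $\BL(\mathbf{B},\mathbf{c})^{\frac1p-1}$ is. Next I would check that the constant $c(\mathbf{c},\theta,\mathbf{d},p)$ of \eqref{const} is finite and positive: from \eqref{pi-def} one computes $\frac{1}{p_i} = 1 + \frac{c_i}{\theta_i}\bigl(\frac1p-1\bigr) > 1$, so each $p_i$ lies in $(0,1)$; consequently each factor $p_i^{\theta_i d_i/(2p_i)}$ lies in $(0,1]$ (with value $1$ exactly when $d_i = 0$, where the convention $0^0 = 1$ is invoked) while $p^{-d/(2p)} \in [1,\infty)$, so $0 < c(\mathbf{c},\theta,\mathbf{d},p) < \infty$.

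Armed with this, Theorem \ref{main-cts-thm} reads
$$ c(\mathbf{c},\theta,\mathbf{d},p)\, \BL(\mathbf{B},\mathbf{c})^{\frac1p-1} = \ABLg(\mathbf{B},\mathbf{c},\theta,p) \leq \ABL(\mathbf{B},\mathbf{c},\theta,p) \leq \BL(\mathbf{B},\mathbf{c})^{\frac1p-1}, $$
and since the extreme quantities differ only by the finite positive factor $c(\mathbf{c},\theta,\mathbf{d},p)$, all of $\BL(\mathbf{B},\mathbf{c})^{\frac1p-1}$, $\ABLg(\mathbf{B},\mathbf{c},\theta,p)$ and $\ABL(\mathbf{B},\mathbf{c},\theta,p)$ are simultaneously finite or simultaneously infinite. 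Combined with the first paragraph this gives (i) $\Leftrightarrow$ (iii) $\Leftrightarrow$ (iv). Lieb's theorem $\BL(\mathbf{B},\mathbf{c}) = \BLg(\mathbf{B},\mathbf{c})$ gives (i) $\Leftrightarrow$ (ii). Finally (i) $\Leftrightarrow$ (v) is exactly the cited finiteness criterion, which applies to any continuous Brascamp--Lieb datum; stitching these equivalences together closes the loop.

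I do not expect a real obstacle here, since the statement is genuinely a corollary of results already in hand. The one point I would be careful about — and the only place the argument could slip — is the verification that $c(\mathbf{c},\theta,\mathbf{d},p)$ and $\BL(\mathbf{B},\mathbf{c})^{\frac1p-1}$ are both finite and nonzero; this is exactly where the sign of $\frac1p-1$ for $0<p<1$, and the stated convention $0^0 = \infty^0 = 1$, are needed.
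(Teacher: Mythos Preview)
Your proposal is correct and follows exactly the same approach as the paper, which simply states that the corollary follows by combining Theorem \ref{main-cts-thm}, Lieb's theorem \eqref{lieb-thm}, and the finiteness criterion of \cite[Theorem 1.13]{BCCT2008}. You have additionally spelled out the routine verification that the constants $c(\mathbf{c},\theta,\mathbf{d},p)$ and $\BL(\mathbf{B},\mathbf{c})^{\frac1p-1}$ are finite and strictly positive; the only (harmless) slip is that when $d_i=0$ the factor $p_i^{\theta_i d_i/(2p_i)} = p_i^{0} = 1$ needs no special $0^0$ convention, since $0 < p_i < 1$.
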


Similarly, by combining Theorem \ref{main-disc-thm} with the discrete Lieb theorem \eqref{lieb-thm-disc} and the finiteness theorem from \cite[Theorem 2.4]{BCCT2010}, we have

\begin{corollary}[Finiteness criterion, discrete case]  Let $(\mathbf{B}, \mathbf{c})$ be a discrete Brascamp--Lieb datum, let $0 < p < 1$, and let $\theta_1,\dots,\theta_k>0$ be real numbers summing to $1$. Then the following are equivalent:
\begin{itemize}
\item[(i)]  $\BL(\mathbf{B}, \mathbf{c})$ is finite.
\item[(ii)]  $\BLs(\mathbf{B}, \mathbf{c})$ is finite.
\item[(iii)] $\ABL(\mathbf{B}, \mathbf{c}, \mathbf{\theta}, p)$ is finite.
\item[(iv)] $\ABLs(\mathbf{B}, \mathbf{c}, \mathbf{\theta}, p)$ is finite.
\item[(v)]  One has
$$ \operatorname{rank}(V) \leq \sum_{i=1}^k c_i \operatorname{rank}( B_i V ),$$
for every subgroup $V$ of $G$.
\end{itemize}
\end{corollary}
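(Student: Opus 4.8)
The plan is to read off every equivalence from three ingredients already established: the chain of identities $\ABLs(\mathbf{B}, \mathbf{c}, \mathbf{\theta}, p) = \ABL(\mathbf{B}, \mathbf{c}, \mathbf{\theta}, p) = \BL(\mathbf{B}, \mathbf{c})^{\frac{1}{p}-1}$ of Theorem \ref{main-disc-thm}, the discrete Lieb theorem $\BL(\mathbf{B}, \mathbf{c}) = \BLs(\mathbf{B}, \mathbf{c})$ recorded in \eqref{lieb-thm-disc}, and the combinatorial finiteness criterion of \cite[Theorem 2.4]{BCCT2010}. The single elementary observation needed is that, since $0 < p < 1$, the exponent $\frac{1}{p}-1$ is a fixed positive real number (so the convention $0^0 = \infty^0 = 1$ is never invoked), and hence $t \mapsto t^{\frac{1}{p}-1}$ is a strictly increasing bijection of $[0,\infty]$ onto itself; in particular $\BL(\mathbf{B}, \mathbf{c})^{\frac{1}{p}-1}$ is finite if and only if $\BL(\mathbf{B}, \mathbf{c})$ is finite.

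Granting this, Theorem \ref{main-disc-thm} immediately yields the equivalence of (i), (iii) and (iv): both $\ABL(\mathbf{B}, \mathbf{c}, \mathbf{\theta}, p)$ and $\ABLs(\mathbf{B}, \mathbf{c}, \mathbf{\theta}, p)$ equal $\BL(\mathbf{B}, \mathbf{c})^{\frac{1}{p}-1}$, which is finite exactly when $\BL(\mathbf{B}, \mathbf{c})$ is. (Here (iii) and (iv) are in fact not merely equivalent but identical, in contrast with the continuous corollary, where $\ABLg$ and $\ABL$ may genuinely differ.) The discrete Lieb theorem \eqref{lieb-thm-disc} then adjoins (ii) to this list, since $\BLs(\mathbf{B}, \mathbf{c}) = \BL(\mathbf{B}, \mathbf{c})$. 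Finally, \cite[Theorem 2.4]{BCCT2010} asserts precisely that $\BL(\mathbf{B}, \mathbf{c})$ (equivalently $\BLs(\mathbf{B}, \mathbf{c})$) is finite if and only if $\operatorname{rank}(V) \leq \sum_{i=1}^k c_i \operatorname{rank}(B_i V)$ for every subgroup $V$ of $G$, which is exactly (v). Stringing these together gives the full cycle of equivalences.

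There is no substantial obstacle here; the corollary is an amalgamation of the three cited facts, and the only step that calls for a moment's care is matching the hypotheses and the phrasing of (v) with the source. I would verify that the standing non-degeneracy assumption ``$\bigcap_{i=1}^k \operatorname{ker} B_i$ finite'' is indeed the hypothesis under which \cite[Theorem 2.4]{BCCT2010} is proved (and note that in any case taking $V = \bigcap_{i=1}^k \operatorname{ker} B_i$ in (v) only forces this subgroup to have rank $0$, which is the most that finiteness of $\BL$ can register), and that ``rank'' is read throughout as torsion-free rank, so that $\operatorname{rank}(B_i V)$ enjoys the additivity on short exact sequences that the cited proof uses. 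I would also point out explicitly --- to avoid confusion with the continuous corollary --- that no scaling condition (``equality when $V = G$'') is present in the discrete statement, the discrete setting having no dilation symmetry that would force such an identity.
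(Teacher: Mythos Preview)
Your proposal is correct and follows precisely the same approach as the paper, which simply combines Theorem \ref{main-disc-thm}, the discrete Lieb theorem \eqref{lieb-thm-disc}, and \cite[Theorem 2.4]{BCCT2010}. Your additional remarks about the positivity of $\frac{1}{p}-1$ and the matching of hypotheses are sound elaborations of what the paper leaves implicit.
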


The inequalities in Theorem \ref{main-cts-thm} are strict in general; we discuss this phenomenon further in Sections \ref{Section:ablg-abl} and \ref{Section:abl-bl}.  One may also take adjoints of further variants of the Brascamp--Lieb inequalities, such as the nonlinear Brascamp--Lieb inequalities; see Section \ref{Section:nonlinear}.

In Section \ref{Section:loomis} we present the simpler special case of adjoint Loomis--Whitney inequalities, which are already of interest.  As one application of these inequalities we reprove a log-convexity property of the Gowers uniformity norms that was previously observed by Shkredov \cite{shkredov} and Manners \cite{manners}. In a further application we obtain some reverse $L^p$ inequalities for tomographic transforms such as the X-ray transform in Section \ref{Section:X-ray}.  Sections \ref{adj-sec} and \ref{Section:lower-adjoint} are then devoted to establishing Theorems \ref{main-cts-thm} and \ref{main-disc-thm} in full generality.  In Section \ref{Section:entropy} we explore the connections between the adjoint Brascamp--Lieb inequalities \eqref{pto0} and the entropic Brascamp--Lieb inequalities of Carlen and Cordero-Erausquin \cite{CCE2009}. Finally, in Section \ref{Section:questions} we pose some questions.
\begin{remark}
The adjoint Brascamp--Lieb inequalities presented here are one of several ``dual" formulations of the Brascamp--Lieb inequalities in the literature. As we have  seen in Remark \ref{Remark:Renyi} (see also Section \ref{Section:entropy}), its closest relatives are the entropic Brascamp--Lieb inequalities of Carlen and Cordero-Erausquin \cite{CCE2009}, which may be viewed as certain Legendre duals of the Brascamp--Lieb inequalities. A rather different duality involving factorization has recently been developed by Carbery, H\"anninen and Valdimarsson \cite{CHV2023}, which has the virtue of also applying to the so-called Kakeya--Brascamp--Lieb inequalities (of which Guth's endpoint multilinear Kakeya inequality \cite{Guth2010} is an example); see also \cite{Ben2021}. Barthe's reverse Brascamp--Lieb inequality \cite{barthe} is a further manifestation of (convex) duality that appears to have no obvious direct connection to ours. The inverse Brascamp--Lieb inequalities of Barthe and Wolff \cite{BW2022} have some similarities with the reverse adjoint Brascamp--Lieb inequalities presented in Section \ref{adj-sec}, although no substantial connection is found. Somewhat further afield, there is a notion of a (Fourier) dual Brascamp--Lieb datum in \cite{BJ2022} that allows one Brascamp--Lieb inequality to be traded for another.
\end{remark}
\subsection{Acknowledgments}

The first author is supported by EPSRC grant EP/W032880/1.
The second author is supported by NSF grant DMS-1764034 and by a Simons Investigator Award.  We thank Tony Carbery, Eric Carlen,  Tom Courtade, Alex Koldobsky, Freddie Manners, Shohei Nakamura, Ron Peled and Hiroshi Tsuji for useful comments.


\section{Adjoint Loomis--Whitney inequalities and the Gowers uniformity norms}\label{Section:loomis}
We begin by demonstrating the process of taking adjoints of a multilinear inequality in a simple case, establishing the adjoint of the well-known Loomis--Whitney inequality \cite{Loomis-Whitney} discussed in Example \ref{Example:LW}.
\begin{theorem}\label{lw-alw}  Let $(\Omega_i,\mu_i)$ be measure spaces for $i=1,\dots,d$.
\begin{itemize}
\item[(i)]  (Loomis--Whitney inequality)  If $f_i \colon \prod_{1 \leq j \leq d; j \neq i} \Omega_j \to \R^+$ is a measurable function for $i=1,\dots,d$, then
\begin{align*}
& \int_{\prod_{j=1}^d \Omega_j} \prod_{i=1}^d f_i( x_1,\dots,\widehat{x}_{i},\dots,x_d)\ d\mu_1(x_1) \dots d\mu_d(x_d) 
 \leq \prod_{i=1}^d \|f_i\|_{L^{d-1}(\prod_{1 \leq j \leq d: j \neq i} \Omega_i)},
\end{align*}
where $\prod_{1 \leq j \leq d: j \neq i} \Omega_i$ is endowed with the product measure $\prod_{1 \leq j \leq d: j \neq i} \mu_i$.
\item[(ii)]  (Adjoint Loomis--Whitney inequality)  If $f \colon \prod_{1 \leq j \leq d} \Omega_j \to \R^+$ is a measurable function, $0 < p \leq 1$, and $\theta_1,\dots,\theta_d > 0$ are such that
\begin{equation}\label{theta-sum} \sum_{i=1}^d \theta_i = 1,
\end{equation}
then
\begin{equation}\label{ano}
 \|f\|_{L^{p}(\prod_{j=1}^d \Omega_j)} \leq \prod_{i=1}^d \| f_i \|_{L^{p_i}(\prod_{1 \leq j \leq d: j \neq i} \Omega_i)}^{\theta_i},
\end{equation}
where $p_i$ is defined by the equation
\begin{equation}\label{od}
 \frac{1}{d-1} \left(1-\frac{1}{p}\right) = \theta_i \left(1-\frac{1}{p_i}\right)
\end{equation}
and
$$ f_i( x_1,\dots,\widehat{x}_{i},\dots,x_d) \coloneqq \int_{\Omega_i} f(x_1,\dots,x_d)\ d\mu_i(x_i)$$
is the $i^{\mathrm{th}}$ marginal of $f$.  For instance, setting all the $\theta_i$ equal to $\frac{1}{d}$, we have
\begin{equation}\label{lw}
 \|f\|_{L^{p}(\prod_{j=1}^d \Omega_j)} \leq \prod_{i=1}^d \| f_i \|_{L^{\frac{p(d-1)}{d-p}}(\prod_{1 \leq j \leq d: j \neq i} \Omega_i)}^{\frac{1}{d}}
\end{equation}
for all $0 < p \leq 1$. 
\item[(iii)]  (Equality)  If in (ii) one has $p<1$, and the two sides of \eqref{ano} are finite and non-zero, then equality holds in \eqref{ano}  if and only if there exist measurable subsets $E_j$ of $\Omega_j$ of positive finite measure for $1,\dots,d$ such that $f(x_1,\dots,x_d) = c \prod_{j=1}^d 1_{E_j}(x_j)$ for almost all $(x_1,\dots,x_d) \in \prod_{j=1}^d \Omega_j$ and some $c>0$.
\end{itemize}
\end{theorem}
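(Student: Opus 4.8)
The plan is to establish (i) as a standalone classical fact, deduce (ii) from it by a H\"older-duality argument — the prototype of the general ``adjoint'' construction carried out in Section~\ref{adj-sec} — and obtain (iii) by tracking the equality cases through that argument. For (i), the Loomis--Whitney inequality over abstract product spaces, I would induct on $d$. The case $d=2$ is an identity, namely $\int_{\Omega_1\times\Omega_2} f_1(x_2)f_2(x_1) = (\int f_1)(\int f_2)$, since $d-1=1$. For the inductive step, integrate first in $x_d$: inside, $f_d$ is independent of $x_d$, so one application of H\"older in $(x_1,\dots,x_{d-1})$ peels off $\|f_d\|_{L^{d-1}}$ (which does not depend on $x_d$), the inductive hypothesis on the $(d-1)$-fold product disposes of the surviving factors, and a final application of H\"older in $x_d$ with $d-1$ exponents all equal to $d-1$, together with the Fubini--Tonelli theorem, recombines everything.

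\emph{Part (ii).} We may assume $p<1$; if $p=1$ then $p_i=1$ by \eqref{od} and \eqref{ano} is an identity because marginals preserve the $L^1$ norm. Put $\lambda_i:=\frac{(1-p)p_i}{d-1}$ and split, using H\"older's inequality with the conjugate exponents $\frac1p$ and $\frac1{1-p}$,
\[
\int_{\prod_j\Omega_j} f^p
= \int_{\prod_j\Omega_j}\Bigl(f\prod_i f_i^{-\lambda_i/p}\Bigr)^{p}\Bigl(\prod_i f_i^{\lambda_i/(1-p)}\Bigr)^{1-p}
\le \Bigl(\int_{\prod_j\Omega_j}f\prod_i f_i^{-\lambda_i/p}\Bigr)^{p}\Bigl(\int_{\prod_j\Omega_j}\prod_i f_i^{\lambda_i/(1-p)}\Bigr)^{1-p}.
\]
To bound the first factor, write $f=\prod_i f^{\theta_i}$ (here $\sum_i\theta_i=1$ is exactly what makes this a valid H\"older partition), apply H\"older with exponents $1/\theta_i$, and in each resulting factor carry out the integration over the single variable $x_i$; since $\int_{\Omega_i}f\,d\mu_i = f_i(\widehat x_i)$ and \eqref{od} forces the residual power of $f_i$ to be precisely $p_i$, the first factor is at most $\prod_i\|f_i\|_{L^{p_i}}^{p_i\theta_i}$. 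For the second factor, note $\lambda_i/(1-p)=p_i/(d-1)$ and apply part (i) to the functions $f_i^{p_i/(d-1)}$, which gives the bound $\prod_i\|f_i\|_{L^{p_i}}^{p_i/(d-1)}$. Multiplying the two estimates, the exponent of each $\|f_i\|_{L^{p_i}}$ becomes $p\,p_i\theta_i+(1-p)\frac{p_i}{d-1}$, which by one more use of \eqref{od} equals $\theta_i p$; taking the $p$-th root yields \eqref{ano} with constant $1$, and the special case \eqref{lw} is $\theta_i\equiv 1/d$.

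\emph{Part (iii).} For the ``if'' direction one inserts $f=c\prod_j 1_{E_j}$, so that $f_i(\widehat x_i)=c\,\mu_i(E_i)\prod_{j\neq i}1_{E_j}(x_j)$, and checks that the exponent of $c$ is $\sum_i\theta_i=1$ on both sides while the exponent of each $\mu_j(E_j)$ on the right is $\theta_j+\sum_{i\neq j}\frac{\theta_i}{p_i}$, which equals $\frac1p$ by \eqref{od}. For the ``only if'' direction, assume the two sides of \eqref{ano} are finite, nonzero and equal; then each inequality used in the proof of (ii) is an equality. Equality in the outer H\"older step forces $f=c\prod_i f_i(\widehat x_i)^{\kappa_i}$ a.e., with $\kappa_i=\frac{p_i}{(d-1)p}>0$. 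When $d\ge3$, equality in the Loomis--Whitney inequality used for the second factor forces each marginal to factorise as $f_i(\widehat x_i)=c_i\prod_{j\neq i}\phi_j(x_j)$ for some non-negative $\phi_j$; equality in the inner H\"older step forces each $f_i$, hence each $\phi_j$, to be constant on its support, so $\phi_j=c_j'1_{E_j}$ and $f_i=c_i'1_{\prod_{j\neq i}E_j}$. Substituting back into $f=c\prod_i f_i^{\kappa_i}$ and using that positive powers of indicators are unchanged gives $f=c'\prod_j 1_{E_j}$, with each $\mu_j(E_j)$ finite and nonzero by the finiteness/nonvanishing hypothesis. The case $d=2$ is handled separately: part (i) is vacuous there, but equality in the outer H\"older step already gives a tensor factorisation $f(x_1,x_2)=\Phi(x_1)\Psi(x_2)$, and equality in the inner H\"older step forces $\Phi,\Psi$ to be constant on their supports, i.e.\ multiples of indicators of sets of positive finite measure.

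\emph{Main obstacle.} The arithmetic underlying (ii) is routine — every exponent identity needed is a one-line manipulation of \eqref{od} and $\sum_i\theta_i=1$. The genuinely delicate point is the ``only if'' part of (iii): it requires combining the equality conditions of three simultaneous applications of H\"older and of Loomis--Whitney, and in particular it requires the precise equality case of the Loomis--Whitney inequality for $d\ge3$, namely that the extremising tuples are products of functions of single variables and not merely of indicators. Without this one cannot exclude ``combinatorial box'' configurations such as $f=1_{\{(0,0,0),(1,1,1)\}}$, which meet the two H\"older equality conditions but fail the Loomis--Whitney one.
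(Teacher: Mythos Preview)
Your proof of (i) and (ii) is correct and essentially identical to the paper's: the paper writes $g_i=f_i^{p_i/(d-1)}$, records the identities $\int f\,g_i^{-(1-p)/(\theta_ip)}=\|f_i\|_{p_i}^{p_i}$, applies Loomis--Whitney to $\prod_i g_i$, and then combines everything with a single $(d+1)$-linear H\"older using exponents $\theta_1p,\dots,\theta_dp,1-p$. Your two-stage H\"older (an outer $p/(1-p)$ split followed by an inner $d$-way split with exponents $1/\theta_i$) is just a refactoring of that same $(d+1)$-linear H\"older, and the arithmetic is identical.

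For (iii) your argument works but the paper's route is a little cleaner and worth knowing. After using Finner's equality case for Loomis--Whitney and the H\"older equality case to obtain a tensor factorisation $f(x)=\prod_j h_j(x_j)$, the paper observes that for tensor products \eqref{ano} \emph{factorises} as a product over $j$ of one-variable H\"older inequalities
\[
\|h_j\|_{L^p(\Omega_j)}\le \|h_j\|_{L^1(\Omega_j)}^{\theta_j}\prod_{i\ne j}\|h_j\|_{L^{p_i}(\Omega_j)}^{\theta_i},
\]
valid because $\tfrac{\theta_j}{1}+\sum_{i\ne j}\tfrac{\theta_i}{p_i}=\tfrac{1}{p}$ by \eqref{od}; equality in each then forces $h_j$ to be a constant multiple of an indicator, uniformly in $d\ge2$. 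This avoids having to unpack the inner-H\"older equality condition by hand (which in your sketch is stated but not argued: one must note that proportionality of the functions $f\cdot f_i^{-(1-p_i)}$ for different $i$, combined with the factorisation $f_i=c_i\prod_{j\ne i}\phi_j$, forces each $\phi_j$ to be constant on its support by comparing the $x_a$- and $x_b$-dependence for $a\ne b$). Your approach is fine, but the reduction to one-dimensional H\"older handles all $d\ge2$ at once and makes the ``indicator'' conclusion immediate.
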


\begin{proof}  One can establish (i) by induction and H\"older's inequality: see for instance \cite[Corollary 2.1]{finner}.  To prove (ii), 
we may assume that the factors on the right-hand side of \eqref{ano} are positive and finite, since the claim is trivial otherwise.  

If we write $g_i \coloneqq f_i^{\frac{p_i}{d-1}}$, we see from \eqref{od} that
$$ f_i^{p_i} = f_i g_i^{-\frac{1-p}{\theta_i p}}$$
whenever $f_i \neq 0$, and thus
$$ \int_{\prod_{1 \leq j \leq d: j \neq i} \Omega_j} f_i g_i^{-\frac{1-p}{\theta_i p}}\ \prod_{1 \leq j \leq d: j \neq i} d\mu_j = \|f_i\|_{L^{p_i}(\prod_{1 \leq j \leq d: j \neq i} \Omega_i)}^{p_i},$$
with the understanding that the integrand vanishes when $f_i=0$.  By the Fubini--Tonelli theorem, we conclude that
\begin{equation}\label{id}
 \int_{\prod_{1 \leq j \leq d} \Omega_j} f(x_1,\dots,x_d) g_i( x_1,\dots,\widehat{x}_{i},\dots,x_d)^{-\frac{1-p}{\theta_i p}}\ \prod_{1 \leq j \leq d} d\mu_j(x_j) = \|f_i\|_{L^{p_i}(\prod_{1 \leq j \leq d: j \neq i} \Omega_i)}^{p_i}
\end{equation}
for all $i=1,\dots,d$ (again with the convention that the integrand vanishes when $f=0$).
On the other hand, from Part (i) (with $f_i$ replaced by $g_i$) we have
\begin{equation}\label{lw-conseq}
\int_{\prod_{1 \leq j \leq d} \Omega_j} \prod_{i=1}^d g_i( x_1,\dots,\widehat{x}_{i},\dots,x_d)\ \prod_{1 \leq j \leq d} d\mu_j(x_j) \leq \prod_{i=1}^d \|f_i\|_{L^{p_i}(\prod_{1 \leq j \leq d: j \neq i} \Omega_i)}^{\frac{p_i}{d-1}}.
\end{equation}
Applying H\"older's inequality (raising \eqref{id} to the power $\theta_i p$ and \eqref{lw-conseq} to the power $1-p$, which is permitted thanks to \eqref{theta-sum}) one obtains
$$
 \int_{\prod_{1 \leq j \leq d} \Omega_j} f^p\ \prod_{1 \leq j \leq d} d\mu_j \leq 
\prod_{i=1}^d \|f_i\|_{L^{p_i}(\prod_{1 \leq j \leq d: j \neq i} \Omega_i)}^{\theta_i p p_i + (1-p) \frac{p_i}{d-1}}.
$$
From \eqref{od} and some algebra we see that
$$ \theta_i p p_i + (1-p) \frac{p_i}{d-1} = p p_i \left(\theta_i - \frac{1}{d-1} \left(1-\frac{1}{p}\right)\right) = p p_i \left(\theta_i - \theta_i \left(1 - \frac{1}{p_i}\right)\right) = \theta_i p,$$
giving the desired inequality.

Now we prove (iii).  The ``if'' part of the implication follows from the Fubini--Tonelli theorem and a routine computation, so we focus on the ``only if'' direction.  We must have equality in \eqref{lw-conseq}.  Applying \cite[Theorem 2.1]{finner}, we see that each of the $g_i$ have a tensor product structure $g_i( x_1,\dots,\widehat{x}_{i},\dots,x_d) = \prod_{j \neq i} h_{ij}(x_j)$ almost everywhere for some measurable functions $h_{ij} \colon \Omega_j \to \R^+$.  On the other hand, since the final application of H\"older's inequality in (ii) must also hold with equality, we see that $f^p$ is a scalar multiple of $\prod_{i=1}^d g_i$, thus $f$ also has a tensor product structure
$$ f(x_1,\dots,x_d) = \prod_{j=1}^d h_j(x_j)$$
almost everywhere for some measurable functions $h_j \colon \Omega_j \to \R^+$, which we can take to be in $L^p$ and not zero almost everywhere.  By the Fubini--Tonelli theorem the inequality \eqref{ano} is then the product of the $d$ inequalities
$$  \|h_j\|_{L^{p}(\Omega_j)} \leq \|h_j \|_{L^1(\Omega_j)}^{\theta_j} \prod_{i \neq j} \| h_j \|_{L^{p_i}(\Omega_j)}^{\theta_i},$$
for $j=1,\dots,d$, each of which follows from H\"older's inequality since
$$ \frac{\theta_j}{1} + \sum_{i \neq j} \frac{\theta_i}{p_i} = 1 + \sum_{i \neq j} \theta_i \left(\frac{1}{p_i} - 1\right) =
1 + \sum_{i \neq j} \frac{1}{d-1} \left(\frac{1}{p} - 1\right) = \frac{1}{d}$$
thanks to \eqref{theta-sum} and \eqref{od}.  Since $p \neq 1$, we may apply the converse of H\"older's inequality and conclude that each $h_j$ is a constant multiple of an indicator function $1_{E_j}$ with $E_j$ of finite positive measure, from which the claim readily follows.
\end{proof}

\begin{remark}
In the case $\Omega_j = \R^{d_j}$ when the $\Omega_j$ are Euclidean spaces with Lebesgue measure, the deduction of Theorem \ref{lw-alw}(ii) from Theorem \ref{lw-alw}(i) is a special case of Theorem \ref{main-cts-thm}.  If instead the $\Omega_j$ are discrete abelian groups with counting measure, the deduction is similarly a special case of Theorem \ref{main-disc-thm}.
\end{remark}

\begin{remark}\label{finner-rem} The Loomis--Whitney inequality was generalized by Finner \cite{finner} as follows.  With the notation of Theorem \ref{lw-alw}, suppose that we have a family $S_1,\dots,S_k$ of non-empty subsets of $\{1,\dots,d\}$ and exponents $q_1,\dots,q_k \geq 1$ such that
$$ \sum_{i=1}^k \frac{1}{q_i} 1_{S_i}(j) = 1$$
for all $j=1,\dots,d$.  Setting $\Omega_{S_i} \coloneqq \prod_{j \in S_i} \Omega_j$ with the product measure $\mu_{S_i} \coloneqq \prod_{j \in S_i} \mu_j$, and taking any non-negative $f_i \colon \Omega_{S_i} \to \R^+$, the Finner inequality asserts that
$$ \int_{\prod_{j=1}^d \Omega_j} \prod_{i=1}^k f_i \circ \pi_i\ \prod_{j=1}^d d\mu_j \leq
\prod_{i=1}^k \| f_i \|_{L^{q_i}(\Omega_{S_i})},$$
where $\pi_i \colon \prod_{j=1}^d \Omega_i \to \Omega_{S_i}$ are the projection maps.  Applying the same adjoint procedure used to prove Theorem \ref{lw-alw}, one can conclude that
If $f \colon \prod_{1 \leq j \leq d} \Omega_j \to \R^+$ is measurable, $0 < p \leq 1$, and $\theta_1,\dots,\theta_k > 0$ sum to $1$, then
\begin{equation}\label{ano-finner}
 \|f\|_{L^{p}(\prod_{j=1}^d \Omega_j)} \leq \prod_{i=1}^k \| \pi_i^* f \|_{L^{p_i}(\Omega_{S_i})}^{\theta_i},
\end{equation}
where the exponent $p_i$ is defined by the formula
$$ \frac{1}{q_i} \left(1-\frac{1}{p}\right) = \theta_i \left(1-\frac{1}{p_i}\right)$$
and $(\pi_i)_*$ is the pushforward operator
$$ (\pi_i)_* f( (x_j)_{j \in S_i} ) \coloneqq \int_{\prod_{1 \leq j \leq d: j \not \in S_i} \Omega_j} f( (x_j)_{j=1}^d )\ \prod_{1 \leq j \leq d: j \not \in S_i} d\mu_j.$$
We leave the verification of \eqref{ano-finner} to the interested reader.  Again, this is a special case of Theorem \ref{main-cts-thm} or Theorem \ref{main-disc-thm} when the $\Omega_j$ are Euclidean spaces or discrete abelian groups respectively.  The situation in Theorem \ref{lw-alw} corresponds to the case when $k=d$, $S_i = \{1,\dots,d\}\backslash \{i\}$, and $q_i = \frac{1}{d-1}$.
\end{remark}

We now give an application of this adjoint Loomis--Whitney inequality to the Gowers uniformity norms \cite{gowers}.  Let $(G,+)$ be a locally compact abelian group with a Haar measure $\mu$.  For any non-negative measurable function $f \colon G \mapsto \R^+$ and any $d \geq 1$, we define the Gowers uniformity norm\footnote{This is in fact a seminorm when $d=1$.} $\|f\|_{U^d(G)}$ by the formula
$$ \|f\|_{U^d(G)} \coloneqq \left(\int_{G^{d+1}} \Delta_{h_1} \dots \Delta_{h_d} f(x)\ d\mu(x) d\mu(h_1) \dots d\mu(h_d)\right)^{1/2^d},$$
where $\Delta_h$ denotes the multiplicative derivative
$$ \Delta_h f(x) \coloneqq f(x+h) f(x).$$
One can also define these norms for complex-valued $f$ (if we replace $f(x+h) f(x)$ by $f(x+h) \overline{f(x)}$ in the definition of $\Delta_h f(x)$), but we will not consider this case here.  From H\"older's inequality we easily obtain the inequality
$$ \|f\|_{U^d(G)} \leq \|f\|_{L^{2^d/(d+1)}(G)};$$
see for instance \cite[(5)]{eisner-tao}. In fact the $U^d$ norm also inherits the log-convexity of the $L^{2^d/(d+1)}$ norms:

\begin{corollary}[Log-convexity of Gowers norms]  For any non-negative function $f \coloneqq G \to \R^+$, the function $\frac{d+1}{2^d} \mapsto \|f\|_{U^d(G)}$ for $d=1,2,\dots$ is log-convex.
\end{corollary}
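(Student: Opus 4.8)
The plan is to reduce the asserted log-convexity to a single three-term inequality for consecutive Gowers norms, and then to prove that inequality by feeding a cleverly chosen auxiliary function on $G^d$ into the adjoint Loomis--Whitney inequality \eqref{lw} (that is, Theorem~\ref{lw-alw}(ii)) followed by an interpolation via H\"older's inequality.

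First I would observe that the abscissae $x_d:=\tfrac{d+1}{2^d}$ are \emph{strictly decreasing} in $d$ (indeed $x_{d+1}/x_d=\tfrac12\cdot\tfrac{d+2}{d+1}<1$), so that log-convexity of the map $x_d\mapsto\|f\|_{U^d(G)}$ is equivalent to the assertion that for every $d\ge2$,
\[
\|f\|_{U^d(G)}\le \|f\|_{U^{d+1}(G)}^{\lambda_d}\,\|f\|_{U^{d-1}(G)}^{1-\lambda_d},\qquad\text{where}\quad \lambda_d:=\frac{x_{d-1}-x_d}{x_{d-1}-x_{d+1}}=\frac{2(d-1)}{3d-2}.
\]
I then fix $d\ge2$ and introduce the auxiliary function $\Phi=\Phi_{f,d}\colon G^d\to\R^+$,
\[
\Phi(h_1,\dots,h_d):=\int_G\ \prod_{\omega\in\{0,1\}^d}f\Big(x+\sum_{i=1}^d\omega_i h_i\Big)\,d\mu(x),
\]
and record three identities, each a consequence of Fubini--Tonelli together with the elementary ``peel off one shift'' factorisation of $\prod_{\omega\in\{0,1\}^d}$ into two copies of $\prod_{\omega'\in\{0,1\}^{d-1}}$ (one based at $x$, the other at $x+h_j$) and the translation invariance of Haar measure: \textup{(a)} $\|\Phi\|_{L^1(G^d)}=\|f\|_{U^d(G)}^{2^d}$, which is just the definition of the Gowers norm; \textup{(b)} $\|\Phi\|_{L^2(G^d)}^2=\|f\|_{U^{d+1}(G)}^{2^{d+1}}$, obtained by peeling the extra shift off $\|f\|_{U^{d+1}(G)}^{2^{d+1}}$ and integrating out that shift and the base point, which collapses the expression to $\int_{G^d}\Phi^2$; and \textup{(c)} for each $j$ the marginal of $\Phi$ in the variable $h_j$ equals the square of the integrand defining $\|f\|_{U^{d-1}(G)}^{2^{d-1}}$ in the remaining $d-1$ shift variables, so that, since $\|g^2\|_{L^{1/2}}=\|g\|_{L^1}^2$, one has $\|(\text{$j$-th marginal of }\Phi)\|_{L^{1/2}(G^{d-1})}=\|f\|_{U^{d-1}(G)}^{2^d}$ for every $j$ (using the symmetry of the Gowers norm in its shift variables).

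With these identities in hand, I would apply the adjoint Loomis--Whitney inequality \eqref{lw} to $\Phi$ on the product space $G^d=G\times\dots\times G$ (all factors $(G,\mu)$) at the exponent $\widetilde p:=\tfrac{d}{2d-1}$, which lies in $(0,1)$ for $d\ge2$; a short computation with \eqref{od} (equivalently, $\tfrac{\widetilde p(d-1)}{d-\widetilde p}=\tfrac12$) shows that the companion exponents are all equal to $\tfrac12$, so by \textup{(c)} the inequality reads
\[
\|\Phi\|_{L^{d/(2d-1)}(G^d)}\le\prod_{i=1}^d\|(\text{$i$-th marginal of }\Phi)\|_{L^{1/2}(G^{d-1})}^{1/d}=\|f\|_{U^{d-1}(G)}^{2^d}.
\]
Since $\lambda_d\cdot\tfrac12+(1-\lambda_d)\cdot\tfrac{2d-1}{d}=1$ with $\lambda_d=\tfrac{2(d-1)}{3d-2}$, H\"older's inequality (log-convexity of the $L^p$-norms of the single function $\Phi$) gives $\|\Phi\|_{L^1(G^d)}\le\|\Phi\|_{L^2(G^d)}^{\lambda_d}\|\Phi\|_{L^{d/(2d-1)}(G^d)}^{1-\lambda_d}$; substituting \textup{(a)}, \textup{(b)} and the displayed bound, and then taking $2^d$-th roots, yields precisely the three-term inequality above, which completes the proof.

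The only genuinely non-routine part is the choice of $\Phi$ and the verification of the ``self-similarity'' identities \textup{(b)} and \textup{(c)}, which are of the same flavour as the Cauchy--Schwarz--Gowers inequality. Once these are isolated, everything else is bookkeeping: one invocation of the adjoint Loomis--Whitney inequality already established in this section, one application of H\"older, and elementary arithmetic with the exponents.
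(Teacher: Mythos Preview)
Your proof is correct and essentially identical to the paper's own argument: the function $\Phi$ is exactly the paper's $F$, the three identities (a)--(c) are the same, and the application of the adjoint Loomis--Whitney inequality \eqref{lw} at $\widetilde p=\tfrac{d}{2d-1}$ followed by H\"older's inequality to interpolate between $L^{d/(2d-1)}$ and $L^2$ is precisely the paper's route. The only cosmetic difference is that the paper normalizes $\|f\|_{U^{d-1}}=\|f\|_{U^{d+1}}=1$ rather than tracking the explicit exponent $\lambda_d=\tfrac{2(d-1)}{3d-2}$.
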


Thus for instance one has
\begin{equation}\label{123}
 \|f\|_{U^2(G)} \leq \|f\|_{U^1(G)}^{1/2} \|f\|_{U^3(G)}^{1/2};
\end{equation}
as a special case, if we set $f=1_A$ for a finite set $A$ and endow $G$ with counting measure, we see (after some routine algebra) that if $A$ contains at least $\delta |A|^3$ parallelograms 
$$(x, x+h, x+k, x+h+k)$$
for some $\delta>0$ (where $x,h,k$ range over $G$), then it contains at least $\delta^4 |A|^4$ parallelopipeds
$$(x, x+h, x+k, x+l, x+h+k, x+h+l,x+k+l,x+h+k+l)$$
(where $x,h,k,l$ range over $G$). This inequality was previously observed by Shkredov \cite[Proposition 35]{shkredov}, with an alternate proof given by Manners \cite[Proposition 2.1]{manners}.  Our proof here can be viewed as the natural generalization of Manners' proof.

\begin{proof}  It suffices to prove the log-convexity for three consecutive values $\|f\|_{U^{d-1}(G)}$, $\|f\|_{U^{d}(G)}$, $\|f\|_{U^{d+1}(G)}$ of the Gowers norms for any $d \geq 2$, that is to say that
$$ \|f\|_{U^d(G)} \leq \|f\|_{U^{d-1}(G)}^\theta  \|f\|_{U^{d+1}(G)}^{1-\theta}$$
where $\theta$ is such that
$$ \frac{d+1}{2^d} = \theta \frac{d}{2^{d-1}} + (1-\theta) \frac{d+2}{2^{d+1}}.$$
We can of course assume that the right-hand side norms are positive and finite.
By scaling both $f$ and the Haar measure $\mu$, we may normalize
$$ \|f\|_{U^{d-1}(G)} = \|f\|_{U^{d+1}(G)} = 1$$
and our goal is now to show that
$$ \|f\|_{U^d(G)} \leq 1.$$
Let $F: G^d \to \R^+$ denote the function
$$ F(h_1,\dots,h_d) := \int_G \Delta_{h_1} \dots \Delta_{h_d} f(x)\ d\mu(x).$$
Then we have
\begin{equation}\label{F-1G}
 \|F\|_{L^1(G^d)} = \|f\|_{U^d(G)}^{2^d}
\end{equation}
and by a routine Fubini and change of variables
\begin{equation}\label{f2}
 \|F\|_{L^2(G^d)} = \|f\|_{U^{d+1}(G)}^{2^d} = 1.
\end{equation}
Also, the marginals $F_i: G^{d-1} \to \R^+$ all take the form
$$ F_i(h_1,\dots,h_{d-1}) = \left(\int_G \Delta_{h_1} \dots \Delta_{h_{d-1}} f(x)\ d\mu(x)\right)^2$$
so we also have
$$ \|F_i\|_{L^{1/2}(G^{d-1})} = \|f\|_{U^{d-1}(G)}^{2^d} = 1.$$
Applying \eqref{lw} with $p = \frac{d}{2d-1}$ (and $\Omega_j=G$ for all $j$), we conclude that
$$ \|F\|_{L^{\frac{d}{2d-1}}(G^d)} \leq 1$$
and hence by \eqref{f2} and H\"older's inequality we also have
$$ \|F\|_{L^1(G^d)} \leq 1,$$
giving the claim from \eqref{F-1G}.
\end{proof}

When $(G,\mu)$ is a probability space, the constant of $1$ implicit in \eqref{123} is sharp, as can be seen by testing against the constant function $f=1$.  However, in the case of the reals $G=\R$ with Lebesgue measure, we believe (in the spirit of the sharp Young and Hausdorff--Young inequalities \cite{beckner}, \cite{BL1976}) that the constant can be improved, thus we conjecture that there exists $\eps>0$ such that
\begin{equation}\label{fu2}
 \|f\|_{U^2(\R)} \leq (1-\eps) \|f\|_{U^1(\R)}^{1/2} \|f\|_{U^3(\R)}^{1/2}
\end{equation}
for all measurable $f \colon \R \to \R^+$, an similarly for higher uniformity norms.  We recall from \cite[Theorem 1.6]{eisner-tao} that we have the sharp inequalities
\begin{align*}
\|f\|_{U^1(\R)} &= \|f\|_{L^1(\R)} \\
\|f\|_{U^2(\R)} &\leq \frac{2^{1/2}}{3^{3/8}} \|f\|_{L^{4/3}(\R)} \\
\|f\|_{U^3(\R)} &\leq \frac{1}{2^{1/8}} \|f\|_{L^2(\R)} 
\end{align*}
and more generally
$$ \|f\|_{U^d(\R)} \leq \frac{2^{d/2^d}}{(d+1)^{(d+1)/2^{d+1}}} \|f\|_{L^{2^d/(d+1)}(\R)}$$
which is consistent with \eqref{fu2} (and its higher degree generalizations) but does not imply it.

While we were not able to establish the conjectured inequality \eqref{fu2}, we have the following partial result as evidence in its favor:

\begin{proposition}[Equality cannot be attained over the reals]\label{no-eq}  Let $f \colon \R \to \R^+$ be such that $\|f\|_{U^1(\R)}$, $\|f\|_{U^3(\R)}$ are non-zero and finite.  Then we have strict inequality
$$ \|f\|_{U^2(\R)} < \|f\|_{U^1(\R)}^{1/2} \|f\|_{U^3(\R)}^{1/2}$$
in \eqref{123}.
\end{proposition}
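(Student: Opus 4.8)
The plan is to argue by contradiction, re-running the proof of the log-convexity corollary in the case of the three norms $\|f\|_{U^1},\|f\|_{U^2},\|f\|_{U^3}$ (so $d=2$ there) and tracking the equality cases. We may assume $\|f\|_{U^2(\R)}\neq 0$, since otherwise the asserted strict inequality is immediate from the hypotheses $\|f\|_{U^1(\R)},\|f\|_{U^3(\R)}\neq 0$. Writing $F(h_1,h_2):=\int_\R\Delta_{h_1}\Delta_{h_2}f(x)\,dx$ as in that proof, we have $\|F\|_{L^1(\R^2)}=\|f\|_{U^2(\R)}^4$ and $\|F\|_{L^2(\R^2)}=\|f\|_{U^3(\R)}^4$, while both marginals of $F$ coincide with $A^2$, where $A(h):=\int_\R f(x)f(x+h)\,dx$ is the autocorrelation of $f$ and $\|A^2\|_{L^{1/2}(\R)}=\|f\|_{U^1(\R)}^4$. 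The argument in the corollary routes through the instance of the adjoint Loomis--Whitney inequality \eqref{lw} with two factors and $p=\tfrac23$, applied to $F$, followed by a single H\"older interpolation; equality in the proposition forces equality in that instance of \eqref{lw}. Since $p=\tfrac23<1$ and the two sides of \eqref{lw} are here finite and non-zero (finiteness from $\|f\|_{U^1(\R)}<\infty$, non-triviality from $\|f\|_{U^1(\R)},\|f\|_{U^2(\R)}\neq 0$), Theorem \ref{lw-alw}(iii) then furnishes measurable sets $E_1,E_2\subseteq\R$ of positive finite measure and a constant $c>0$ with $F(h_1,h_2)=c\,1_{E_1}(h_1)1_{E_2}(h_2)$ for almost every $(h_1,h_2)$.

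Integrating this identity in $h_2$ (using Tonelli's theorem) gives $A(h_1)^2=c|E_2|\,1_{E_1}(h_1)$ for almost every $h_1$, i.e.\ $A=\lambda 1_{E_1}$ almost everywhere for some $\lambda>0$ and some set $E_1$ of positive finite measure. It then remains --- and this is the step I expect to be the crux --- to show that the autocorrelation of a non-zero non-negative function in $L^1(\R)$ cannot agree almost everywhere with a positive multiple of the indicator of a set of positive finite measure. The obstruction to a direct appeal to continuity is that the equality analysis controls only the bilinear quantity $F$, and a priori $f$ need not lie in $L^2(\R)$. To deal with this I would first observe that, as $f\in L^1(\R)$ (because $\|f\|_{U^1(\R)}<\infty$), the truncations $f_N:=\min(f,N)$ obey $\int_\R f_N^2\le N\int_\R f<\infty$ and hence lie in $L^2(\R)$; each $A_N(h):=\int_\R f_N(x)f_N(x+h)\,dx$ is therefore continuous in $h$ (being the inner product of $f_N$ with a translate of itself), and $A_N\uparrow A$ pointwise by monotone convergence, so $A$ is lower semicontinuous. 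A lower semicontinuous function which is $\le\lambda$ almost everywhere is $\le\lambda$ everywhere; in particular $\|f\|_{L^2(\R)}^2=A(0)\le\lambda<\infty$, so in fact $f\in L^2(\R)$.

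Granting $f\in L^2(\R)$, the autocorrelation $A$ is now genuinely continuous on $\R$ (as the inner product of $f$ with its translate, by strong continuity of translation on $L^2$). Finally I would use connectedness of $\R$: the open set $\{A\neq 0\}$ equals $E_1$ up to a null set, so it has finite positive measure, and therefore each of its connected components is a bounded open interval $(a,b)$ with $-\infty<a<b<\infty$; on such an interval $A$ is continuous and almost everywhere equal to $\lambda$, hence identically $\lambda$, while $A(b)=0$ because $b\notin\{A\neq 0\}$ --- contradicting continuity of $A$ at the finite point $b$. This completes the proof. (Conceptually, this last step is exactly where $\R$ with Lebesgue measure differs from a probability space $(G,\mu)$: there the constant function is a genuine extremizer precisely because the relevant ``set $E$'' is all of $G$ and thus has empty boundary.)
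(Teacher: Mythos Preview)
Your proof is correct and follows essentially the same route as the paper's: reduce to the equality case of the adjoint Loomis--Whitney inequality \eqref{lw} with $d=2$, $p=\tfrac23$, invoke Theorem~\ref{lw-alw}(iii) to force $F$ to be a tensor indicator, and then rule out the autocorrelation $A=f*\tilde f$ being a nontrivial indicator. The only differences are in packaging: the paper obtains $f\in L^2(\R)$ by bounding $f$ below by simple functions $g$ and invoking the Steinhaus lemma to control $g*\tilde g$ near the origin, whereas you reach the same conclusion via the cleaner observation that $A=\sup_N A_N$ is lower semicontinuous (so $A\le\lambda$ a.e.\ forces $A(0)=\|f\|_{L^2}^2\le\lambda$); and the paper's final step simply notes that $f*\tilde f$ lies in the Wiener algebra and hence is continuous, while you spell out the connected-components contradiction explicitly.
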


\begin{proof}  Without loss of generality one can normalize so that
$$ \|f\|_{U^1(\R)} = \|f\|_{U^3(\R)} = 1.$$
Suppose for contradiction that the claim fails, then by \eqref{123} we also have
$$ \|f\|_{U^2(\R)} = 1.$$
If we now let $F \colon \R^2 \to \R^+$ be the function
$$ F(h,k) \coloneqq \int_{\mathbb{R}} \Delta_h \Delta_k f(x)\ dx$$
and let $G \colon \R \to \R^+$ denote the (common) marginal
\begin{equation}\label{gh}
 G(h) = \int_\R F(h,k)dk = \left(\int_\R \Delta_h f(x)\ dx\right)^2,
\end{equation}
then we have
\begin{align}
 \int_{\R} G(h)^{1/2}\ dh &= \|f\|_{U^1(\R)}^2 = 1 \label{g1}\\
 \int_{\R^2} F(h,k)\ dh dk &= \|f\|_{U^2(\R)}^4 = 1 \label{F1}\\
 \int_{\mathbb{R}^2} F^2(h,k)\ dh dk &= \|f\|_{U^3(\R)}^8 = 1.\label{F2}
\end{align}
From \eqref{g1} and \eqref{lw} we have
$$ \int_{\R^2} F^{2/3}(h,k)\ dh dk \leq \left(\int_\R G(h)^{1/2}\right)^{4/3} = 1.$$
On the other hand, from \eqref{F1}, \eqref{F2} and H\"older's inequality we also have
$$ \int_{\R^2} F^{2/3}(h,k)\ dh dk \geq 1.$$
Thus all inequalities here must in fact hold with equality.  By Theorem \ref{lw-alw}(iii), we see that $F(h,k) = c 1_E(h) 1_{E'}(k)$ almost everywhere for some $c>0$ and some positive measure subsets $E,E'$ of $\R$; since $F$ is symmetric we have $E=E'$, and from \eqref{F1}, \eqref{F2} we see that $c=1$ and $E$ has measure one.  This implies that $G = 1_E$ almost everywhere.  From \eqref{gh} we conclude that
$$ f * \tilde f = 1_E$$
almost everywhere, where $\tilde f(x) \coloneqq f(-x)$ is the reflection of $f$ and $*$ denotes the usual convolution operation.  But if we bound $f$ from below by any simple function $g$, $f * \tilde f$ is lower bounded by $g * \tilde g$ which converges to $\|g\|_{L^2(\R)}$ near the origin by the Steinhaus lemma.  Thus $\|g\|_{L^2(\R)} \leq 1$, so by monotone convergence $\|f\|_{L^2(\R)} \leq 1$ (in particular $f$ is square-integrable, which was not immediate from the hypotheses).  This implies that $f * \tilde f$ is continuous (in fact it is in the Wiener algebra), which contradicts the fact that it is an indicator function.
\end{proof}


\section{Upper bounds for adjoint Brascamp--Lieb constants}\label{adj-sec}
Let the notation and hypotheses be as in Theorem \ref{main-cts-thm}.  In this section we establish the inequality
\begin{equation}\label{lo}
 \ABL(\mathbf{B}, \mathbf{c}, \mathbf{\theta}, p) \leq \BL(\mathbf{B}, \mathbf{c})^{\frac{1}{p}-1}.
\end{equation}
A similar argument (which we leave to the reader) gives the corresponding inequality in Theorem \ref{main-disc-thm}.

The proof follows the same lines as that used to prove Theorem \ref{lw-alw}(ii).  Our task is to show that
\begin{equation}\label{fd}
 \| f \|_{L^p(\R^d)} \leq \BL(\mathbf{B}, \mathbf{c})^{\frac{1}{p}-1} \prod_{i=1}^k \| (B_i)_* f \|_{L^{p_i}(\R^{d_i})}^{\theta_i}
\end{equation}
holds for any non-negative measurable $f \colon \R^d \to \R$.  We may assume that the norms on the right-hand side are non-zero and finite, as the claim is trivial otherwise.

If we write $f_i \coloneqq (B_i)_* f$ and $g_i \coloneqq f_i^{c_i p_i}$, we see from \eqref{pi-def} that
$$ f_i^{p_i} = f_i g_i^{-\frac{1-p}{\theta_i p}}$$
whenever $f_i \neq 0$, and thus
$$ \int_{\R^{d_i}} f_i g_i^{-\frac{1-p}{\theta_i p}} = \|f_i\|_{L^{p_i}(\R^{d_i})}^{p_i},$$
with the understanding that the integrand vanishes when $f_i=0$.  Using \eqref{adj-def}, we conclude that
\begin{equation}\label{id2}
 \int_{\R^d} f(x) g_i(B_i x)^{-\frac{1-p}{\theta_i p}}dx = \|f_i\|_{L^{p_i}(\R^{d_i})}^{p_i}
\end{equation}
for all $i=1,\dots,d$ (again with the convention that the integrand vanishes when $f=0$).  
On the other hand, from \eqref{blc} (with $f_i$ replaced by $g_i$) we have
\begin{equation}\label{lw-conseq2}
\int_{\R^d} \prod_{i=1}^k g_i(B_i x)dx \leq \BL(\mathbf{B}, \mathbf{c}) \prod_{i=1}^k \|f_i\|_{L^{p_i}(\R^{d_i})}^{c_i p_i}.
\end{equation}
Applying the $(k+1)$-linear H\"older inequality (raising \eqref{id2} to the power $\theta_i p$ and \eqref{lw-conseq2} to the power $1-p$, which is permitted as $\theta_1+\cdots+\theta_k=1$) one obtains
\begin{equation}\label{afterholder}
 \int_{\R^d} f^p \leq \BL(\mathbf{B}, \mathbf{c})^{1-p}
\prod_{i=1}^k \|f_i\|_{L^{p_i}(\R^{d_i})}^{\theta_i p p_i + (1-p) c_i p_i}.
\end{equation}
From \eqref{pi-def} and some algebra we see that
$$ \theta_i p p_i + (1-p) c_i p_i = p p_i \left(\theta_i - c_i \left(1-\frac{1}{p}\right)\right) = p p_i \left(\theta_i - \theta_i \left(1 - \frac{1}{p_i}\right)\right) = \theta_i p $$
giving the desired inequality.

As mentioned in Remark \ref{Remark:p<1}, for $p>1$ a reverse form of the adjoint Brascamp--Lieb inequality \eqref{abli} is available.
\begin{theorem}[Reverse adjoint Brascamp--Lieb inequality]\label{Theorem:reverse} Suppose $p\geq 1$ and 
$\theta_1,\hdots,\theta_k$ are nonzero real numbers that sum to $1$ and are such that $\theta_i>0$ for precisely one $i$. If \eqref{pi-def} holds then 
\begin{equation}\label{rabli}
\|f\|_p\geq  \BL(\mathbf{B},\mathbf{c})^{\frac{1}{p}-1}\prod_{i=1}^k\|(B_i)_*f\|_{p_i}^{\theta_i}.
\end{equation}
\end{theorem}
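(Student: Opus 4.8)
The plan is to rerun the argument of Section~\ref{adj-sec} essentially verbatim, observing that the hypotheses $p\ge 1$ and ``$\theta_i>0$ for precisely one $i$'' cause exactly two of the inequalities used there to reverse direction. As in that section I would set $f_i\coloneqq (B_i)_*f$ and $g_i\coloneqq f_i^{c_ip_i}$; the algebraic identity $f_i^{p_i}=f_ig_i^{-\frac{1-p}{\theta_ip}}$ follows from \eqref{pi-def} with no input from the signs of the parameters, so the two basic facts used there survive unchanged: the \emph{exact} identity $\int_{\R^d}f(x)\,g_i(B_ix)^{-\frac{1-p}{\theta_ip}}\,dx=\|f_i\|_{L^{p_i}(\R^{d_i})}^{p_i}$ coming from \eqref{adj-def}, and the Brascamp--Lieb inequality $\int_{\R^d}\prod_i g_i(B_ix)\,dx\le \BL(\mathbf{B},\mathbf{c})\prod_i\|f_i\|_{L^{p_i}(\R^{d_i})}^{c_ip_i}$. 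One may assume $\BL(\mathbf{B},\mathbf{c})<\infty$, since otherwise the right-hand side of \eqref{rabli} is zero and there is nothing to prove.

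The first reversal: because $1-p\le 0$, raising the Brascamp--Lieb inequality to the power $1-p$ turns it into a \emph{lower} bound for $\bigl(\int_{\R^d}\prod_i g_i(B_ix)\,dx\bigr)^{1-p}$. The second reversal is in the Hölder step. Exactly as in Section~\ref{adj-sec} one has the pointwise identity $f^p=\prod_i\bigl(f\,g_i(B_ix)^{-\frac{1-p}{\theta_ip}}\bigr)^{\theta_ip}\cdot\bigl(\prod_i g_i(B_ix)\bigr)^{1-p}$, and integrating this calls for a $(k+1)$-linear Hölder inequality with conjugate exponents $\frac1{\theta_1p},\dots,\frac1{\theta_kp},\frac1{1-p}$ (reciprocals summing to $p\sum_i\theta_i+(1-p)=1$). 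The requirement that $\theta_i>0$ for exactly one index is precisely what guarantees that exactly one of these exponents is positive --- the one attached to that index --- while the other $k$, including $\frac1{1-p}$ since $p>1$, are negative; hence the relevant form of Hölder is the \emph{reverse} one, producing a \emph{lower} bound for $\int f^p$. Feeding in the two facts from the first paragraph then yields $\int_{\R^d}f^p\ge \BL(\mathbf{B},\mathbf{c})^{1-p}\prod_i\|f_i\|_{L^{p_i}(\R^{d_i})}^{\theta_ipp_i+(1-p)c_ip_i}$, and the exponent collapses to $\theta_ip$ via the same manipulation of \eqref{pi-def} as in Section~\ref{adj-sec}; taking $p$th roots gives \eqref{rabli}. (When $p=1$ everything is degenerate: $p_i=1$ for all $i$, pushforwards preserve the $L^1$ norm, and \eqref{rabli} becomes the trivial equality $\|f\|_1\ge\|f\|_1$.)

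The only genuinely new ingredient is the reverse Hölder inequality in the form: for non-negative $h_0,\dots,h_k$ and extended exponents $s_0,\dots,s_k$ with $\sum_j s_j^{-1}=1$, exactly one positive and the rest negative, one has $\int\prod_j h_j\ge\prod_j\|h_j\|_{s_j}$ (with the usual conventions for $0$ and $\infty$). I would obtain this by applying the ordinary Hölder inequality to the single factor whose exponent is positive, after writing an appropriate power of that factor as a product of the others --- the standard device behind the reverse Hölder inequality, and the one underlying Barthe's reverse Brascamp--Lieb inequality \cite{barthe}. The crux of the argument is simply recognizing that the sign hypothesis on the $\theta_i$ is exactly what licenses this reverse step; once that is in place I expect the remaining work to be bookkeeping rather than substance: checking that the integrals $\int h_j^{s_j}$ produced by the reverse Hölder step are exactly the quantities $\|f_i\|_{L^{p_i}}^{p_i}$ and $\int_{\R^d}\prod_i g_i(B_ix)\,dx$ already under control (so that no new finiteness assumption creeps in), and tracking the boundary conventions ($0^{\text{negative}}=\infty$, $\infty^{\text{negative}}=0$, $0\cdot\infty=0$) in the degenerate cases where $\BL(\mathbf{B},\mathbf{c})=\infty$ or some $\|(B_i)_*f\|_{p_i}$ is $0$ or $\infty$, so that \eqref{rabli} holds as stated without auxiliary hypotheses.
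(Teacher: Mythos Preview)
Your proposal is correct and follows essentially the same approach as the paper: rerun the argument of Section~\ref{adj-sec}, replacing the $(k+1)$-linear H\"older inequality by its reverse form (licensed precisely by the hypothesis that $\theta_i>0$ for exactly one $i$, so that among the weights $\theta_1p,\dots,\theta_kp,1-p$ all but one are negative), and observing that raising the Brascamp--Lieb inequality to the negative power $1-p$ reverses its direction. The paper's proof is terser but identical in substance; your additional discussion of degenerate cases and the mechanism behind reverse H\"older is not needed for the paper's purposes but is not wrong.
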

\begin{proof}
As \eqref{rabli} holds with equality when $p=1$ we may assume that $p>1$. We follow closely the proof of \eqref{lo}, although after establishing \eqref{lw-conseq2} we apply the $(k+1)$-linear reverse H\"older inequality (see for example \cite{BW2022}) with exponents $\theta_1 p,\hdots,\theta_k p,1-p$, which we may as all but one of these exponents is negative by hypothesis. Since \eqref{lw-conseq2} is raised to the negative power $1-p$, we obtain \eqref{afterholder} with the inequality reversed. The proof of \eqref{rabli} concludes as in the proof of \eqref{lo}.
\end{proof}


The reverse adjoint Brascamp--Lieb inequality \eqref{rabli}, and in particular the associated sign profile of the $\theta_j$, allows us to \emph{transfer bounds between marginals}. For example, consider Loomis--Whitney data with $p=\infty$ (or rather the limiting case as $p\rightarrow\infty$), $\theta_1=\cdots=\theta_{d-1}=-1$ and $\theta_d=d$. Since $c_i=\frac{1}{d-1}$, we have
$p_i=\frac{d-1}{d}$ for $1\leq i\leq d-1$, and $p_d=\frac{d(d-1)}{d(d-1)-1}$. If $\|f\|_\infty=1$, and $f_i$ denotes the $i$th marginal of $f$, then we have
$$
1\geq \prod_{i=1}^{d-1}\|f_i\|_{\frac{d-1}{d}}^{-1}\|f_d\|_{\frac{d(d-1)}{d(d-1)-1}}^d,
$$
or in other words,
$$
\|f_d\|_{\frac{d(d-1)}{d(d-1)-1}}\leq\prod_{i=1}^{d-1}\|f_i\|_{\frac{d-1}{d}}^{\frac{1}{d}}.
$$
We refer to Section \ref{Section:X-ray} for an application of such inequalities to the X-ray transform.


\section{Calculating adjoint Brascamp--Lieb constants for gaussians and subspaces}\label{Section:lower-adjoint}
We now complete the proofs of Theorems \ref{main-cts-thm} and \ref{main-disc-thm}.  As observed in Remark \ref{Remark:deg}, when $p=1$ we have $p_i=1$ for all $i$, and so all adjoint Brascamp--Lieb constants are equal to $1$. Thus we may assume that $0 < p < 1$.

To prove Theorem \ref{main-cts-thm}, it suffices by \eqref{lo}, Lieb's theorem \eqref{lieb-thm}, and the trivial inequality
$$\ABLg(\mathbf{B}, \mathbf{c}, \mathbf{\theta}, p)\leq \ABL(\mathbf{B}, \mathbf{c}, \mathbf{\theta}, p)$$
to establish the equality
\begin{equation}\label{adjoint-calc}
 c(\mathbf{c}, \theta, \mathbf{d}, p) 
\BLg(\mathbf{B}, \mathbf{c})^{\frac{1}{p}-1} = \ABLg(\mathbf{B}, \mathbf{c}, \mathbf{\theta}, p)
\end{equation}
under the hypotheses of that theorem.  Similarly, to complete the proof of Theorem \ref{main-disc-thm} it suffices to establish the identity
\begin{equation}\label{adjoint-calc-disc}
\BLs(\mathbf{B}, \mathbf{c})^{\frac{1}{p}-1} = \ABLs(\mathbf{B}, \mathbf{c}, \mathbf{\theta}, p)
\end{equation}
under the hypotheses of that theorem.

We begin with \eqref{adjoint-calc-disc}, which is simpler.  Let the hypotheses be as in Theorem \ref{main-disc-thm}.  Let $f=1_H$ be the indicator function of a finite subgroup $H$ of $G$.  The arguments in Section \ref{adj-sec} then establish that
$$
 \| f \|_{\ell^p(G)} \leq \BL(\mathbf{B}, \mathbf{c})^{\frac{1}{p}-1} \prod_{i=1}^k \| (B_i)_* f \|_{\ell^{p_i}(G_i)}^{\theta_i}.$$
Note that in the case that $f=1_H$, the functions $g_i \coloneqq ((B_i)_* f)^{p_i}$ are constant multiples of indicator functions $1_{B_i H}$ of the finite subgroups $B_i H$ of $G_i$.  Inspecting the arguments in Section \ref{adj-sec}, we see that we may replace $\BL(\mathbf{B}, \mathbf{c})$ with $\BLs(\mathbf{B}, \mathbf{c})$, thus
$$
 \| f \|_{\ell^p(G)} \leq \BLs(\mathbf{B}, \mathbf{c})^{\frac{1}{p}-1} \prod_{i=1}^k \| (B_i)_* f \|_{\ell^{p_i}(G_i)}^{\theta_i}.$$
Applying Definition \ref{disc-adj}, we conclude that
$$ \ABLs(\mathbf{B}, \mathbf{c}, \mathbf{\theta}, p) \leq \BLs(\mathbf{B}, \mathbf{c})^{\frac{1}{p}-1}$$
which gives the upper bound in \eqref{adjoint-calc-disc}.  To establish the lower bound, it suffices from Definition \ref{disc-bl-def} to show that
\begin{equation}\label{fing}
 \sum_G \prod_{i=1}^k f_i^{c_i} \circ B_i \leq \ABLs(\mathbf{B}, \mathbf{c}, \mathbf{\theta}, p)^{\frac{p}{1-p}} \prod_{i=1}^k \Bigl(\sum_{G_i} f_i\Bigr)^{c_i}
\end{equation}
whenever $f_i = 1_{H_i}$ are indicator functions of finite subgroups $H_i$ of $G$, that is to say that
$$ \# H \leq \ABLs(\mathbf{B}, \mathbf{c}, \mathbf{\theta}, p)^{\frac{p}{1-p}} \prod_{i=1}^k (\# H_i)^{c_i}$$
where $H$ is the subgroup $H \coloneqq \bigcap_{i=1}^k B_i^{-1} H_i$.  From the hypothesis that $\bigcap_{i=1}^k \operatorname{ker} B_i$ is finite, we see that $H$ is finite.   Applying Definition \ref{disc-adj} with $f=1_H$, we have
$$
 \| 1_H \|_{\ell^p(G)} \leq \ABLs(\mathbf{B}, \mathbf{c}, \mathbf{\theta}, p) \prod_{i=1}^k \| (B_i)_* 1_H \|_{\ell^{p_i}(G_i)}^{\theta_i}$$
which we can expand as
$$ (\# H)^{1/p} \leq \ABLs(\mathbf{B}, \mathbf{c}, \mathbf{\theta}, p) \prod_{i=1}^k\left(\frac{\# H}{\#(B_i H)} \#(B_i H)^{1/p_i}\right)^{\theta_i}$$
which rearranges using \eqref{pi-def} as
$$ \# H \leq \ABLs(\mathbf{B}, \mathbf{c}, \mathbf{\theta}, p)^{\frac{p}{1-p}} \prod_{i=1}^k \#(B_i H)^{c_i}.$$
Since $B_i H \subset H_i$, we obtain \eqref{fing} as desired. This completes the proof of \eqref{adjoint-calc-disc}.

Now we turn to \eqref{adjoint-calc}.  If $f_i(x_i) = e^{-\pi \langle A_i x_i, x_i \rangle}$ are centred gaussian functions for some positive-definite $d_i \times d_i$ matrix $A_i$, then as is well known we have
$$ \int_{\R^{d_i}} f_i = \det(A_i)^{-\frac{1}{2}},$$
and similarly
$$ \int_{\R^{d}}\prod_{i=1}^k f^{c_i}_i \circ B_i = \det\Biggl(\sum_{i=1}^k c_i B_i^* A_i B_i \Biggr)^{-\frac{1}{2}}.$$
Hence by Definition \ref{cbl-def},
$$ \BLg(\mathbf{B}, \mathbf{c}) = \sup_{A_i>0} \frac{\prod_{i=1}^k \det(A_i)^{\frac{c_i}{2}}}{\det\left( \sum_{i=1}^k c_i B_i^* A_i B_i \right)^{\frac{1}{2}} },$$
where the supremum is over tuples $(A_1,\dots,A_k)$ of positive-definite $d_i \times d_i$ matrices $A_i$.  Similarly, if $f(x) = e^{-\pi \langle Ax, x \rangle}$ for some positive-definite $d \times d$ matrix $A$, then a routine computation (using for instance the formula for a Fourier transform of a gaussian) shows that for each $i=1,\dots,k$, the pushforward $(B_i)_* f$ is the gaussian function
\begin{equation}\label{pushgauss} (B_i)_* f(x_i) = \frac{\det(A_i)^{\frac{1}{2}}}{\det(A)^{\frac{1}{2}}} e^{-\pi \langle A_i x_i, x_i \rangle},\end{equation}
where $A_i^{-1} \coloneqq B_i A^{-1} B_i^*$.  One can then calculate
$$ \|f\|_{L^p(\R^d)} = p^{-\frac{d}{2p}} \det(A)^{-\frac{1}{2p}}$$
and
$$ \| (B_i)_* f \|_{L^{p_i}(\R^{d_i})} = p_i^{-\frac{d_i}{2p_i}} \frac{\det(A_i)^{\frac{1}{2}}}{\det(A)^{\frac{1}{2}}} \det(A_i)^{-\frac{1}{2p_i}}.$$
From Definition \ref{cts-adj} and \eqref{const} we then see that
\begin{equation}\label{ag}
 \ABLg(\mathbf{B}, \mathbf{c}, \mathbf{\theta}, p) = c(\mathbf{c}, \theta, \mathbf{d}, p)
\sup_{A>0} \frac{\det(A)^{\frac{1}{2} - \frac{1}{2p}}}{\prod_{i=1}^k \det(A_i)^{\frac{\theta_i}{2} - \frac{\theta_i}{2p_i}}}
\end{equation}
where the supremum is over positive-definite $d \times d$ matrices $A$.  Using \eqref{pi-def} and the definition of $A_i$, and then replacing $A$ by $A^{-1}$, we can rewrite this as
$$ \ABLg(\mathbf{B}, \mathbf{c}, \mathbf{\theta}, p) = c(\mathbf{c}, \theta, \mathbf{d}, p)
\sup_{A>0} \frac{\det(A)^{\frac{1}{2p} - \frac{1}{2}}}{\prod_{i=1}^k \det(B_i A B_i^*)^{\frac{c_i}{2p} - \frac{c_i}{2}}},$$
and so \eqref{adjoint-calc} is equivalent to the identity
\begin{equation}\label{ai}
 \sup_{A_i>0} \frac{\prod_{i=1}^k \det(A_i)^{c_i}}{\det\left( \sum_{i=1}^k c_i B_i^* A_i B_i \right) }
=
\sup_{A>0} \frac{\det(A)}{\prod_{i=1}^k \det(B_i A B_i^*)^{c_i}}.
\end{equation}
This identity is somewhat implicit in at least three places in the literature. Most concretely it may be interpreted as the ``entropic'' characterization of (gaussian) Brascamp--Lieb constants from \cite[Section 3.1]{courtade-liu}, as clarified in Remark \ref{Remark:gaussentropic} of the next section; see also the forthcoming Remarks \ref{Remark:bcct} and \ref{Remark:Gurvits} for further context.  For the convenience of the reader we provide a direct proof of \eqref{ai} here.  We may assume that we have the dimensional analysis condition
\begin{equation}\label{ddi}
 d = \sum_{i=1}^k c_i d_i
\end{equation}
since both sides of \eqref{ai} are infinite otherwise.  

If $A$ is an arbitrary positive-definite $d \times d$ matrix, we set $A_i \coloneqq (B_i A B_i^*)^{-1}$ for $i=1,\dots,k$, and observe from the cyclic property of trace that
$$
\operatorname{tr}\left( \sum_{i=1}^k c_i A^{\frac{1}{2}} B_i^* A_i B_i A^{\frac{1}{2}} \right) = \sum_{i=1}^k c_i \operatorname{tr}\left( \mathrm{id}_{\R^{d_i}} \right) = d$$
thanks to \eqref{ddi}.  By the arithmetic mean-geometric mean inequality applied to the eigenvalues of the positive definite matrix $\sum_{i=1}^k c_i A^{\frac{1}{2}} B_i^* A_i B_i A^{\frac{1}{2}}$, we conclude that
$$ \det\left( \sum_{i=1}^k c_i A^{\frac{1}{2}} B_i^* A_i B_i A^{\frac{1}{2}} \right) \leq 1$$
which we can rearrange (using the definition of $A_i$ and the multiplicativity of the determinant) as
$$ \frac{\prod_{i=1}^k \det(A_i)^{c_i}}{\det\left( \sum_{i=1}^k c_i B_i^* A_i B_i \right) } \geq \frac{\det(A)}{\prod_{i=1}^k \det(B_i A B_i^*)^{c_i}}.$$
Taking suprema over $A$, this gives the lower bound in \eqref{ai}.  To establish the corresponding upper bound, let $A_i$ be an arbitrary positive-definite $d_i \times d_i$ matrix for each $i=1,\dots,k$, and set
$$ A \coloneqq \left( \sum_{i=1}^k c_i B_i^* A_i B_i \right)^{-1}.$$
Then by the cyclic property of trace and \eqref{ddi} as before we have
$$
\sum_{i=1}^k c_i \operatorname{tr}\left( A_i^{\frac{1}{2}} B_i A B_i^* A_i^{\frac{1}{2}} \right) = \operatorname{tr}\left( \mathrm{id}_{\R^{d}} \right) = d$$
and hence by the arithmetic mean-geometric mean inequality 
$$
\sum_{i=1}^k \frac{c_i d_i}{d} \det\left( A_i^{\frac{1}{2}} B_i A B_i^* A_i^{\frac{1}{2}} \right)^{\frac{1}{d_i}} \leq 1.$$
By \eqref{ddi} and the (weighted) arithmetic mean-geometric mean inequality, we thus have
$$ \prod_{i=1}^k \det\left( A_i^{\frac{1}{2}} B_i A B_i^* A_i^{\frac{1}{2}} \right)^{\frac{c_i}{d}} \leq 1.$$
This rearranges (using the definition of $A$ and the multiplicativity of the determinant) to
$$ \frac{\prod_{i=1}^k \det(A_i)^{c_i}}{\det\left( \sum_{i=1}^k c_i B_i^* A_i B_i \right) } \leq \frac{\det(A)}{\prod_{i=1}^k \det(B_i A B_i^*)^{c_i}}.$$
Taking suprema over the $A_i$, we obtain the upper bound in \eqref{ai}.  This completes the proof of \eqref{adjoint-calc}.
\begin{remark}\label{Remark:bcct}  A similar calculation shows that if the extremum defining $\BLg(\mathbf{B}, \mathbf{c})$ is attained, then the extremum defining $\ABLg(\mathbf{B}, \mathbf{c}, \mathbf{\theta}, p)$ is attained, and conversely.  Indeed if $A_1,\hdots,A_k$ extremize $\BLg(\mathbf{B}, \mathbf{c})$, it follows from \cite[Proposition 3.6]{BCCT2008} that $A_i^{-1} = B_i A^{-1} B_i^*$ where $A = \sum_i c_i B_i^* A_i B_i$.  Then evidently
$$ \BLg(\mathbf{B}, \mathbf{c}) := \frac{\prod_{i=1}^k \det(A_i)^{\frac{c_i}{2}}}{\det\left(\sum_{i=1}^k c_i B_i^* A_i B_i \right)} = \frac{\prod_{i=1}^k \det( (B_i A^{-1} B_i^*)^{-1} )^{\frac{c_i}{2}}}{\det(A)^{\frac{1}{2}}},$$
from which it follows that $A$ extremizes \eqref{ag} thanks to \eqref{adjoint-calc}. The converse direction is established similarly, as a routine calculation shows that if $A$ extremizes \eqref{ag} then $A = \sum_i c_i B_i^* A_i B_i$ where $A_i^{-1} = B_i A^{-1} B_i^*$.  
Of course the above observations establish the identity \eqref{ai} in the case that the data $(\mathbf{B},\mathbf{c})$ is gaussian extremizable. It is possible to provide such a variational proof of \eqref{ai} for general data using some additional (regularisation) arguments that force gaussian extremizability; see \cite[Section 8]{BCCT2008} for some suitable constructions.
\end{remark}
\begin{remark}\label{Remark:Gurvits}
The proof that we give of the identity \eqref{ai} closely aligns with that of the duality theorem (Proposition 3.14) for capacities of completely positive operators in \cite{GGOW2018}, and indeed \eqref{ai} can be seen to follow from that theorem at least in the case that the exponents $c_1,\hdots, c_k$ are rational; see \cite[Section 4]{GGOW2018}.
\end{remark}


\section{Connections with entropic Brascamp--Lieb inequalities}\label{Section:entropy}
If $f \colon \R^d \to \R^+$ is a probability density (so that $\int_{\R^d} f = 1$) in the $L \log L$ class $\int_{\R^d} f \log(2+f) < \infty$, we define the continuous entropy $\entropy(f)$ by the formula
$$ \entropy(f) \coloneqq \int_{\R^d} f \log \frac{1}{f}$$
In \cite{CCE2009} (see also \cite{CLL2004}) it was observed that for any Brascamp--Lieb datum $(\mathbf{B}, \mathbf{c})$ with finite Brascamp--Lieb constant, one has the inequality\footnote{In \cite{CCE2009} the opposite sign convention $S(f) = -\entropy(f)$ for entropy was used.}
\begin{equation}\label{hf}
 \entropy(f) \leq \sum_{i=1}^k c_i \entropy((B_i)_* f) + \log \BL(\mathbf{B}, \mathbf{c})
\end{equation}
for probability densities $f \colon \R^d \to \R^+$ in the $L \log L$ class, and that the constant $\log \BL(\mathbf{B}, \mathbf{c})$ is best possible.  If one specializes \eqref{hf} to the situation of Loomis--Whitney or Finner data one obtains Shearer's inequality \cite{shearer}.

As indicated in Remark \ref{Remark:Renyi}, the inequality \eqref{hf} may be viewed as an infinitesimal version of the adjoint Brascamp-Lieb inequality \eqref{pto0} in the limit as $p \to 1$ via its R\'enyi-entropic formulation \eqref{renyi}, along with the well-known fact that $\entropy_p(f)\rightarrow\entropy(f)$ as $p\rightarrow 1$ for (suitably regular) probability densities $f$.  Equivalently one may pass from \eqref{pto0} to \eqref{hf} directly as follows. Let $\theta_1,\dots,\theta_k > 0$ be any parameters summing to one, and suppose for simplicity that $f$ is bounded and compactly supported (and thus automatically in $L \log L$).  If $p = 1 - \varepsilon$ for some small $\varepsilon>0$, then from Taylor expansion one has
$$
 \| f \|_{L^p(\R^d)}^p = 1 + \eps \entropy(f) + O(\eps^2)
$$
(where we allow implied constants in the $O()$ notation to depend on all parameters other than $\eps$), and hence also
\begin{equation}\label{taylor}
 \| f \|_{L^p(\R^d)} = 1 + \eps \entropy(f) + O(\eps^2).
\end{equation}
If we define $p_i$ by \eqref{pi-def} then
$$ p_i = 1 - \frac{c_i}{\theta_i} \eps + O(\eps^2)$$
and thus by another Taylor expansion
$$
 \| (B_i)_* f \|_{L^{p_i}(\R^{d_i})} = 1 + \frac{c_i}{\theta_i} \eps \entropy((B_i)_* f) + O(\eps^2).$$
The right-hand side of \eqref{pto0} can then be computed to be
$$ 1 + \eps \log \BL(\mathbf{B}, \mathbf{c}) + \sum_{i=1}^k c_i \eps \entropy((B_i)_* f) + O(\eps^2),$$
and thus on cancelling we obtain
$$ \entropy(f) \leq \sum_{i=1}^k c_i \entropy((B_i)_* f) + \log \BL(\mathbf{B}, \mathbf{c}) + O(\eps).$$
Sending $\eps \to 0$ we obtain \eqref{hf}.  By replacing $f$ with $\min(f,N) / \int \min(f,N)$ for a large $N$ and then sending $N \to \infty$ using dominated convergence, one can then extend the inequality \eqref{hf} to any $f$ of compact support in the $L \log L$ class, and then a further standard limiting argument can remove the compact support hypothesis.

A similar remark can be made for the discrete adjoint Brascamp--Lieb inequality and the discrete version of \eqref{hf} (with continuous entropy replaced by Shannon entropy); we leave the details to the interested reader.

\begin{remark}\label{Remark:gaussentropic}
In addition to $\D(\mathbf{B}, \mathbf{c}):=\log\BL(\mathbf{B}, \mathbf{c})$ being best possible in \eqref{hf}, it is shown in \cite[Section 3.1]{courtade-liu} that the gaussian entropic Brascamp--Lieb constant $\Dg(\mathbf{B}, \mathbf{c})$, defined as the supremum of $$\entropy(f)-\sum_{i=1}^k c_i \entropy((B_i)_* f)$$ over all gaussian probability density functions $f$, coincides with $\D(\mathbf{B}, \mathbf{c})$. By Lieb's theorem \eqref{lieb-thm} and direct computation using \eqref{pushgauss}, this amounts to the key identity \eqref{ai} in Section \ref{Section:lower-adjoint}.
\end{remark}

The above passage from the adjoint Brascamp--Lieb inequality \eqref{pto0} to the entropy inequality \eqref{hf} is of a similar nature to the passage from Nelson's hypercontractivity inequality to the gaussian log-Sobolev inequality discovered by Gross \cite{Gross1975}. In the spirit of \cite{Gross1975} this implication may also be (effectively) reversed, yielding an alternative ``entropic" proof of the adjoint Brascamp--Lieb inequality \eqref{pto0}. We conclude this section with a discussion of this reverse passage, beginning with a key proposition. For technical reasons we work with functions $f$ that are bounded and compactly supported, noting (as is the case above) that the inequalities \eqref{hf} and \eqref{pto0} may be extended to general functions by a standard limiting argument.
\begin{proposition}\label{pdiff} 
If 
$$
\Lambda(p)=\frac{\|f\|_p}{\BL(\mathbf{B},\mathbf{c})^{\frac{1}{p}-1}\prod_{i=1}^k\|f_i\|_{p_i}^{\theta_i}}
$$
then
$$p^2\frac{d}{dp}\log\Lambda (p)=\log\BL(\mathbf{B},\mathbf{c})-\entropy(f^p/\|f\|_p^p)+\sum_{i=1}^k c_i \entropy(f_i^{p_i}/\|f_i\|_{p_i}^{p_i}),$$
\end{proposition}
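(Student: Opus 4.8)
The plan is to take logarithms and differentiate directly in $p$, reducing the whole identity to one scalar computation about the derivative of a logarithmic $L^p$-norm. Writing $M(p) := \int_{\R^d} f^p$, so that $\log\|f\|_p = \frac1p \log M(p)$, I would first record that $M'(p) = \int_{\R^d} f^p \log f$ (differentiation under the integral sign, justified at the end) and that a one-line expansion of the definition $\entropy(g) = -\int g \log g$ with $g = f^p/M(p)$ gives
\[
\entropy\!\left(f^p/\|f\|_p^p\right) = \log M(p) - \frac{p\,M'(p)}{M(p)}.
\]
Feeding this back into $\frac{d}{dp}\log\|f\|_p = -\frac1{p^2}\log M(p) + \frac1p\frac{M'(p)}{M(p)}$ yields the clean identity
\[
p^2 \frac{d}{dp}\log\|f\|_p = -\,\entropy\!\left(f^p/\|f\|_p^p\right),
\]
together with the same identity for each pair $(f_i, p_i)$ in place of $(f, p)$, where $f_i = (B_i)_* f$.

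Next I would differentiate
\[
\log\Lambda(p) = \log\|f\|_p - \Bigl(\tfrac1p - 1\Bigr)\log\BL(\mathbf{B},\mathbf{c}) - \sum_{i=1}^k \theta_i \log\|f_i\|_{p_i}
\]
term by term. The first term is handled by the displayed identity and the second by $\frac{d}{dp}(\tfrac1p - 1) = -\tfrac1{p^2}$. For the sum, the chain rule together with differentiating the defining relation \eqref{pi-def}, which gives $\frac{c_i}{p^2} = \frac{\theta_i}{p_i^2}\frac{dp_i}{dp}$ and hence $\frac{dp_i}{dp} = \frac{c_i p_i^2}{\theta_i p^2}$, produces
\[
\frac{d}{dp}\log\|f_i\|_{p_i} = \frac{dp_i}{dp}\Bigl(-\frac1{p_i^2}\entropy\bigl(f_i^{p_i}/\|f_i\|_{p_i}^{p_i}\bigr)\Bigr) = -\frac{c_i}{\theta_i p^2}\,\entropy\bigl(f_i^{p_i}/\|f_i\|_{p_i}^{p_i}\bigr).
\]
Assembling the three contributions, the $\theta_i$'s cancel against the $\tfrac{c_i}{\theta_i p^2}$'s, every term carries a common factor $\tfrac1{p^2}$, and multiplying through by $p^2$ gives exactly the asserted formula.

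The only genuine point to be careful about is the analytic justification, which I would dispatch at the outset. Since $f$ is bounded and compactly supported (and not identically zero, the only nontrivial case), so is each $f_i = (B_i)_* f$ by \eqref{pushform}; hence $M(p)$ and its analogues for the $f_i$ are finite and strictly positive, all entropies in sight are finite, and on the fixed support of $f$ the integrand $f^p\log f$ is dominated, uniformly for $p$ in a small interval, by a constant multiple of $(f^{p-\delta}+f^{p+\delta})\,\abs{\log f}$, which is integrable; dominated convergence then licenses differentiation under the integral. I do not expect any deeper obstacle: the substance of the proposition is the bookkeeping that makes the parameters $\theta_i$ disappear, and this cancellation is forced precisely by \eqref{pi-def}.
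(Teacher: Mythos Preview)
Your proof is correct and follows exactly the same approach as the paper, which derives the formula as an immediate consequence of the two identities $\frac{dp_i}{dp}=\frac{c_i}{\theta_i}\bigl(\frac{p_i}{p}\bigr)^2$ (from differentiating \eqref{pi-def}) and $p^2\frac{d}{dp}\log\|f\|_p=-\entropy(f^p/\|f\|_p^p)$; you have simply spelled out the derivations of these identities and the final assembly in more detail than the paper does.
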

\begin{proof}
This is an immediate consequence of the elementary identities 
\begin{equation}\label{chain}
\frac{dp_i}{dp}=\frac{c_i}{\theta_i}\left(\frac{p_i}{p}\right)^2
\end{equation} (see \eqref{pi-def}) and 
\begin{equation}\label{normdiff}
p^2\frac{d}{dp}\log\|f\|_p=-\entropy(f^p/\|f\|_p^p).
\end{equation}
\end{proof}
Proposition \ref{pdiff} quickly captures our previous observation that \eqref{pto0}$\implies$\eqref{hf}. One simply observes that
since $\Lambda(p)\leq 1=\Lambda(1)$,
$$\log\BL(\mathbf{B},\mathbf{c})-\entropy(f)+\sum_{i=1}^k c_i \entropy(f_i)=\Lambda'(1)=\lim_{p\rightarrow 1-}\frac{\Lambda(1)-\Lambda(p)}{1-p}\geq 0$$
whenever $f$ is a probability density.
On the other hand, for functions $f$ for which the ``$p$-entropy inequality"
\begin{equation}\label{subadditivityp}
\entropy(f^p/\|f\|_p^p)\leq \log\BL(\mathbf{B},\mathbf{c})+\sum_{i=1}^k c_i \entropy(f_i^{p_i}/\|f_i\|_{p_i}^{p_i})
\end{equation}
holds for all $0<p\leq 1$, Proposition \ref{pdiff} tells us that $\Lambda$ is nondecreasing, recovering \eqref{pto0}.
\begin{remark}
In addition to the classical case when $p=1$, the inequality \eqref{subadditivityp} is true in the limit as $p\rightarrow 0$. To see this observe that
$$
\frac{f^p}{\|f\|_p^p}\rightarrow \frac{1_{\supp(f)}}{|\supp(f)|}
$$
and 
$$\entropy(f^p/\|f\|_p^p)\rightarrow\log\left(|\supp(f)|\right)$$
as $p\rightarrow 0$.
Hence \eqref{subadditivityp}, in the limit as $p\rightarrow 0$, becomes
\begin{equation}\label{p0}
\log\left(|\supp(f)|\right)\leq \log\BL(\mathbf{B},\mathbf{c})+\sum_{i=1}^k c_i\log\left(|\supp(f_i)|\right).
\end{equation}
Now, if $\supp(f)=\Omega$, then $\supp(f_i)\subseteq B_i\Omega$, and so \eqref{p0} becomes
$$
\log\left(|\Omega|\right)\leq \log\BL(\mathbf{B},\mathbf{c})+\sum_{i=1}^k c_i\log\left(|B_i\Omega|\right),
$$
which exponentiates to 
$$
|\Omega|\leq \BL(\mathbf{B},\mathbf{c})\prod_{i=1}^k|B_i\Omega|^{c_i}.
$$
However, this is the Brascamp--Lieb inequality \eqref{blc} applied with $f_i=1_{B_i\Omega}$, as recalled in Remark \ref{Remark:iso}.
\end{remark}
\begin{proposition}\label{prop:hp}
If $0<p\leq 1$ then \eqref{subadditivityp} holds whenever $f$ is an indicator function of a set.
\end{proposition}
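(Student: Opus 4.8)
The idea is that when $f = 1_\Omega$ is an indicator function, the left-hand side of \eqref{subadditivityp} does not depend on $p$, while the right-hand side is nonincreasing in $p$ on $(0,1]$; hence it suffices to verify the inequality at $p = 1$, where it reduces to the entropic Brascamp--Lieb inequality \eqref{hf}.

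First I would record the elementary identifications. If $f = 1_\Omega$ with $\Omega$ a bounded set of positive measure, then $f^p = 1_\Omega$ for every $p > 0$, so $f^p/\|f\|_p^p = 1_\Omega/|\Omega|$ and
$$\entropy(f^p/\|f\|_p^p) = \log|\Omega|$$
independently of $p$. Writing $f_i := (B_i)_* 1_\Omega$ (which is bounded and compactly supported, since $\Omega$ is bounded), pushforward preserves integrals, so $\|f_i\|_1 = |\Omega|$ and $(B_i)_*(1_\Omega/|\Omega|) = f_i/\|f_i\|_1$. If $\BL(\mathbf{B},\mathbf{c}) = \infty$, or $|\Omega| \in \{0,\infty\}$, the claim is trivial, so we may assume otherwise.

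The one nontrivial ingredient is the following monotonicity lemma: for any nonnegative, bounded, compactly supported $g$ that is not a.e.\ zero, the map $q \mapsto \entropy(g^q/\|g\|_q^q)$ is nonincreasing on $(0,\infty)$. I would prove this by setting $M(q) := \int g^q = \|g\|_q^q$ and noting, as in \eqref{normdiff}, that $\entropy(g^q/\|g\|_q^q) = \log M(q) - q\,M'(q)/M(q) = L(q) - qL'(q)$, where $L := \log M$; differentiating once more gives $\frac{d}{dq}\entropy(g^q/\|g\|_q^q) = -qL''(q)$, which is $\leq 0$ because $L$ is convex --- indeed $M(q) = \int e^{q\log g}$ is a moment generating function, hence log-convex (equivalently $MM'' \geq (M')^2$ by Cauchy--Schwarz). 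Since $0 < p \leq 1$ forces $0 < p_i \leq 1$ by \eqref{pi-def}, and $p_i$ is an increasing function of $p$ on $(0,1]$, applying the lemma with $g = f_i$, $q = p_i$ shows that the right-hand side of \eqref{subadditivityp} is nonincreasing in $p$ on $(0,1]$; in particular
$$\log\BL(\mathbf{B},\mathbf{c}) + \sum_{i=1}^k c_i\,\entropy(f_i^{p_i}/\|f_i\|_{p_i}^{p_i}) \geq \log\BL(\mathbf{B},\mathbf{c}) + \sum_{i=1}^k c_i\,\entropy(f_i/\|f_i\|_1).$$

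Finally I would apply the entropic Brascamp--Lieb inequality \eqref{hf} to the probability density $1_\Omega/|\Omega|$; using the identifications above, it reads $\log|\Omega| \leq \log\BL(\mathbf{B},\mathbf{c}) + \sum_{i=1}^k c_i\,\entropy(f_i/\|f_i\|_1)$. Chaining this with the displayed inequality of the previous paragraph and with the identity $\entropy(f^p/\|f\|_p^p) = \log|\Omega|$ yields \eqref{subadditivityp}. The main (and essentially only) obstacle is the monotonicity lemma, but this is just the log-convexity of $q \mapsto \|g\|_q^q$; everything else is bookkeeping together with the already-established inequality \eqref{hf}.
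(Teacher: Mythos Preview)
Your proposal is correct and follows essentially the same route as the paper: observe that for $f=1_\Omega$ the left-hand side $\entropy(f^p/\|f\|_p^p)$ is independent of $p$, apply the entropic Brascamp--Lieb inequality \eqref{hf} at $p=1$, and then use the monotonicity of $q\mapsto\entropy(g^q/\|g\|_q^q)$ (with $g=f_i$) to pass from $p_i\le 1$ back to $1$. Your proof of the monotonicity via $\frac{d}{dq}\entropy(g^q/\|g\|_q^q)=-qL''(q)\le 0$ with $L=\log\int g^q$ is exactly the paper's variance formula in different notation (the paper also offers a second, H\"older-based argument for the same monotonicity).
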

\begin{proof}
If $f$ is the indicator function of a set then $\entropy(f^p/\|f\|_p^p)=\entropy(f/\|f\|_1)$, and so by \eqref{hf},
$$
\entropy(f^p/\|f\|_p^p)\leq\log\BL(\mathbf{B},\mathbf{c})+\sum_{i=1}^k c_i \entropy(f_i/\|f_i\|_1).
$$
By \eqref{pi-def} we have $p_i\leq 1$ for all $i$, and so it is enough to show that $p\mapsto \entropy(f^p/\|f\|_p^p)$ is nonincreasing.
This may be done by establishing the formula $$-p\frac{d}{dp}\entropy(f^p/\|f\|_p^p)=\int \frac{f^p}{\|f\|_p^p}\left(\log\left(\frac{f^p}{\|f\|_p^p}\right)\right)^2-\left(\int\frac{f^p}{\|f\|_p^p}\log\left(\frac{f^p}{\|f\|_p^p}\right)\right)^2,$$
which may be interpreted as a certain variance with respect to the probability measure $f^p/\|f\|_p^p$.
Alternatively we may make further use of \eqref{taylor} to write 
$$\frac{\|f\|_{1+\varepsilon}}{\|f\|_1}= 1-\varepsilon \entropy(f/\|f\|_1)+O(\varepsilon^2),$$
so that
$$
\frac{\|f\|_{(1+\varepsilon)p}^p}{\|f\|_p^p}= 1-\varepsilon \entropy(f^p/\|f\|_p^p)+O(\varepsilon^2)
$$
for all $p>0$. The required monotonicity will therefore follow provided 
\begin{equation}\label{firstorder}
\|f\|_p^p\|f\|_{(1+\varepsilon)q}^q\leq\|f\|_q^q\|f\|_{(1+\varepsilon)p}^p
\end{equation}
for sufficiently small $\varepsilon>0$
whenever $q<p$. To see this fix $q<p$, choose $0<\varepsilon<\frac{p}{q}-1$ so that
$$
q<(1+\varepsilon)q<p<(1+\varepsilon)p,
$$
and apply H\"older's inequality (twice) to obtain
$$
\|f\|_p\leq\|f\|_q^{1-\theta}\|f\|_{(1+\varepsilon)p}^\theta, \;\;\;\frac{1}{p}=\frac{1-\theta}{q}+\frac{\theta}{(1+\varepsilon)p}
$$
and
$$
\|f\|_{(1+\varepsilon)q}\leq\|f\|_q^{1-\phi}\|f\|_{(1+\varepsilon)p}^\phi, \;\;\;\frac{1}{(1+\varepsilon)q}=\frac{1-\phi}{q}+\frac{\phi}{(1+\varepsilon)p}.
$$
Taking the appropriate powers and multiplying leads to \eqref{firstorder}. 
\end{proof}
While \eqref{subadditivityp} can be seen to fail for general functions $f$ (see Remark \ref{failure} below), it is possible to upgrade \eqref{pto0} from indicator functions to general (simple) functions by an application of the tensor power trick. 
The first observation to make is that \eqref{pto0} is tensor-power invariant. In particular, if $$B_i^{\oplus N}(x_1,\hdots,x_N):=(B_ix_1,\hdots,B_ix_N)$$ then $\BL(\mathbf{B}^{\oplus N},\mathbf{c})=\BL(\mathbf{B},\mathbf{c})^N$, as Brascamp--Lieb constants factor through critical subspaces; see \cite{BCCT2008}. Further, if $(B_i)_{*}f=f_i$ then $(B_i^{\oplus N})_{*}f^{\otimes N}=f_i^{\otimes N}$. Next we write
\begin{equation}\label{decompose}
f^{\otimes N}\sim\sum_{\ell=1}^K\phi_\ell,
\end{equation}
where the $\phi_\ell$ are nonnegative multiples of indicator functions of disjoint subsets of $\mathbb{R}^{Nd}$, and the implicit constants are independent of $N$. As $f$ is a simple function the number of terms $K$ in the decomposition \eqref{decompose} may be taken to grow linearly in $N$.
Since \eqref{pto0} holds for indicator functions,
\begin{eqnarray*}
\begin{aligned}
\|f\|_p^N=\|f^{\otimes N}\|_p&\leq c\left(\sum_{\ell=1}^K\|\phi_\ell\|_p^p\right)^{1/p}\\
&\leq c\BL(\mathbf{B}^{\oplus N},\mathbf{c})^{\frac{1}{p}-1}\left(\sum_{\ell=1}^K\prod_{i=1}^k\|(B_i^{\oplus N})_{*}\phi_\ell\|_{p_i}^{p\theta_i}\right)^{\frac{1}{p}}\\
&\leq cK^{1/p}\BL(\mathbf{B}^{\oplus N},\mathbf{c})^{\frac{1}{p}-1}\prod_{i=1}^k\|f_i^{\otimes N}\|_{p_i}^{\theta_i}\\
&\leq cK^{1/p} \BL(\mathbf{B},\mathbf{c})^{N\left(\frac{1}{p}-1\right)}\prod_{i=1}^k\|f_i\|_{p_i}^{N\theta_i},
\end{aligned}
\end{eqnarray*}
where the constant $c$ may change from line to line, but remains independent of $N$.
Taking $N$th roots and using the fact that $K^{1/N}\rightarrow 0$ establishes \eqref{pto0} for simple functions. This may be extended to general functions by a routine limiting argument. We refer to \cite{BCW2005} for a similar application of the tensor power trick in the context of Brascamp--Lieb inequalities.
\begin{remark}\label{failure}
Inequality \eqref{subadditivityp} does not hold in general, and a counterexample may be constructed in the setting of Loomis--Whitney data as follows. 
If $f=g_1\otimes\cdots\otimes g_d$ then it is straightforward to verify that
$$\entropy(f^p/\|f\|_p^p)=\sum_{i=1}^d\entropy(g_i^p/\|g_i\|_p^p)$$
and 
$$
\entropy(f_j^{p_j}/\|f_j\|_{p_j}^{p_j})=\sum_{i\not=j}\entropy(g_i^{p_j}/\|g_i\|_{p_j}^{p_j}),
$$
and so the inequality \eqref{subadditivityp} becomes
\begin{equation}\label{LW:tensorcase}
\sum_{i=1}^d\entropy(g_i^p/\|g_i\|_p^p)\leq \sum_{i=1}^d\frac{1}{d-1}\sum_{j\not=i}\entropy(g_i^{p_j}/\|g_i\|_{p_j}^{p_j}),
\end{equation}
where the exponents $p_j$ satisfy
$$
\frac{1}{d-1}\left(1-\frac{1}{p}\right)=\theta_j\left(1-\frac{1}{p_j}\right)
$$
and $\theta_1+\cdots+\theta_d=1$.
Since the $g_i$ may be chosen independently, \eqref{LW:tensorcase} reduces to
\begin{equation}\label{onedim}
\entropy(g^p/\|g\|_p^p)\leq \frac{1}{d-1}\sum_{j\not= i}\entropy(g^{p_j}/\|g\|_{p_j}^{p_j})
\end{equation}
for all $1\leq i\leq d$. If $\theta_j\leq\frac{1}{d-1}$ for all $j$ (as in the balanced case $\theta_j=\frac{1}{d}$), then $p_j\leq p$ for all $j$, and so \eqref{onedim} holds by the monotonicity of $p\mapsto \entropy(f^p/\|f\|_p^p)$ established in the proof of Proposition \ref{prop:hp}. For a counterexample we must therefore allow $\theta_j>\frac{1}{d-1}$ for some $j$. Specialising to $d=3$, the $i=3$ case of \eqref{onedim} becomes
\begin{equation}\label{LWten3}
\entropy(g^p/\|g\|_p^p)\leq\frac{1}{2}\left(\entropy(g^{p_1}/\|g\|_{p_1}^{p_1})+\entropy(g^{p_2}/\|g\|_{p_2}^{p_2})\right),
\end{equation}
and the condition on the Lebesgue exponents becomes
$$
\frac{p_1}{1-p_1}+\frac{p_2}{1-p_2}+\frac{p_3}{1-p_3}=\frac{2p}{1-p}.
$$
Since \eqref{LWten3} is independent of $p_3$, we see that if \eqref{subadditivityp} holds then necessarily \eqref{LWten3} holds whenever the exponents $0<p, p_1,p_2\leq 1$ satisfy
\begin{equation}\label{harmonic}
\frac{p}{1-p}=\frac{1}{2}\left(\frac{p_1}{1-p_1}+\frac{p_2}{1-p_2}\right).
\end{equation} 
Thus \eqref{LWten3} implies the convexity of $\frac{p}{1-p}\mapsto \entropy(g^p/\|g\|_p^p)$ for $0<p<1$. 
Suppose now that $g=1_{[0,1]}(1+\varepsilon h)$ where $h$ is supported in $[0,1]$, has mean value zero, and $\|h\|_2=1$. Observe that on $[0,1]$ we have
$$g^p=1+\varepsilon ph+\frac{p(p-1)}{2}\varepsilon^2 h^2+O(\varepsilon^3),$$
which on formally differentiating in $p$ becomes
$$
g^p\log g=\varepsilon h+\left(p-\frac{1}{2}\right)\varepsilon^2 h^2+O(\varepsilon^3).
$$
It follows that
$$
\entropy(g^p/\|g\|_p^p)=-\frac{p^2}{2}\varepsilon^2+O(\varepsilon^3),
$$
and so if $\entropy(g^p/\|g\|_p^p)$ were a convex function of $q:=\frac{p}{1-p}$ for $0<p<1$, then $q\mapsto \frac{q^2}{(q+1)^2}$ would be a concave function for $0<q<\infty$. However, a routine calculation shows that this fails for $q<\frac{1}{2}$.
\end{remark}


\section{Strict inequality in the lower bound}\label{Section:ablg-abl}
Recall from Remark \ref{Remark:deg} that in either of the degenerate situations $p=1$ or $\mathbf{c}=\theta$, the adjoint Brascamp--Lieb inequality \eqref{abli} is an identity, and so evidently $\ABL(\mathbf{B},\mathbf{c},\theta,p)=\ABLg(\mathbf{B},\mathbf{c},\theta,p)$. In this section we show that  $\ABL(\mathbf{B},\mathbf{c},\theta,p)$ and $\ABLg(\mathbf{B},\mathbf{c},\theta,p)$ differ in all other cases.
\begin{theorem}\label{ABLg<ABL}
If $0<p<1$ and $\mathbf{c}\not=\theta$ then there exists $\kappa>1$, depending only on $d, d_j, \mathbf{c}, \theta, p$, such that
$$\ABL(\mathbf{B},\mathbf{c},\theta,p)\geq \kappa\ABLg(\mathbf{B},\mathbf{c},\theta,p).$$
\end{theorem}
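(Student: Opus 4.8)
The plan is to produce a single non-negative competitor $f$ for which the ratio $\|f\|_p\big/\prod_{i=1}^k\|(B_i)_*f\|_{p_i}^{\theta_i}$ exceeds $\kappa\,\ABLg(\mathbf{B},\mathbf{c},\theta,p)$, obtained by perturbing an extremising gaussian in a direction transverse to the gaussian family. First I would discard the case $\BL(\mathbf{B},\mathbf{c})=\infty$, where both sides are infinite by the finiteness corollary, and recall from \eqref{adjoint-calc} and Lieb's theorem that $\ABLg(\mathbf{B},\mathbf{c},\theta,p)=c(\mathbf{c},\theta,\mathbf{d},p)\,\BL(\mathbf{B},\mathbf{c})^{1/p-1}$ with $c(\mathbf{c},\theta,\mathbf{d},p)<1$ strictly (Remark \ref{Remark:degenerate case}, using $p<1$ and $\mathbf{c}\neq\theta$). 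The quantity $\ABL(\mathbf{B},\mathbf{c},\theta,p)\big/\BL(\mathbf{B},\mathbf{c})^{1/p-1}$ is invariant under the substitutions $B_i\mapsto U_iB_iL$ ($U_i\in GL(\R^{d_i})$, $L\in GL(\R^d)$), since these merely rescale $\BL$ and the pushforward $L^{p_i}$-norms by compensating determinant factors (the relevant exponents cancel by \eqref{pi-def}); using this freedom I would place the datum in geometric position $B_iB_i^*=\mathrm{id}_{\R^{d_i}}$, $\sum_i c_iB_i^*B_i=\mathrm{id}_{\R^d}$, so that $g(x):=e^{-\pi\|x\|^2}$ is the gaussian extremiser, $\BL(\mathbf{B},\mathbf{c})=1$ and $\ABLg(\mathbf{B},\mathbf{c},\theta,p)=c(\mathbf{c},\theta,\mathbf{d},p)$. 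It then suffices to find $f\geq 0$ with $\|f\|_p\geq\kappa\,c(\mathbf{c},\theta,\mathbf{d},p)\prod_{i=1}^k\|(B_i)_*f\|_{p_i}^{\theta_i}$.

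I would then test the functional $J[f]:=\log\|f\|_p-\sum_{i=1}^k\theta_i\log\|(B_i)_*f\|_{p_i}$ along $f=g(1+\eps\phi)$ with $\phi$ bounded and $\eps>0$ small (so $f\geq 0$). Using the pushforward identity \eqref{adj-def} and the explicit pushforward \eqref{pushgauss} of a gaussian, a first-variation computation gives $\tfrac{d}{d\eps}\big|_{\eps=0}J[g(1+\eps\phi)]=\int_{\R^d}\phi\,W$, where
$$ W(x)=p^{d/2}e^{-\pi p\|x\|^2}-\sum_{i=1}^k\theta_i\,p_i^{d_i/2}\,e^{-\pi\langle(\mathrm{id}-(1-p_i)P_i)x,\,x\rangle},\qquad P_i:=B_i^*B_i. $$
One checks that $\int_{\R^d}W=0$ and that $W$ annihilates every affine-quadratic $\phi$ (reflecting homogeneity, translation-invariance, and extremality of $g$ among gaussians, via \eqref{ag}). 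Since a nonzero finite combination of gaussians with distinct quadratic forms is never a polynomial, $W\equiv 0$ would force $\mathrm{id}-(1-p_i)P_i=p\,\mathrm{id}$ for all $i$; as $P_i$ is an orthogonal projection of rank $d_i$, this forces $d_i=d$ and $p_i=p$, hence $c_i=\theta_i$ by \eqref{pi-def}. Thus $W\not\equiv0$ whenever $\mathbf{c}\neq\theta$.

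Taking $\phi:=W/\|W\|_{\infty}$ (admissible, $W$ being bounded), the first-order term of $J[g(1+\eps\phi)]$ equals $\eps\,\|W\|_\infty^{-1}\|W\|_{L^2(\R^d)}^2>0$, while a routine Taylor expansion bounds the $O(\eps^2)$ remainder by $C\eps^2$ with $C$ depending only on $d,d_i,\mathbf{c},\theta,p$ (in geometric position every quantity appearing is an explicit gaussian integral). Optimising in $\eps$ gives $\sup_f J[f]\geq J[g]+\|W\|_\infty^{-2}\|W\|_{L^2}^4/(4C)$, that is
$$ \ABL(\mathbf{B},\mathbf{c},\theta,p)\geq e^{\,\|W\|_\infty^{-2}\|W\|_{L^2}^4/(4C)}\;\ABLg(\mathbf{B},\mathbf{c},\theta,p), $$
which is the claim with $\kappa:=\exp\!\big(\|W\|_\infty^{-2}\|W\|_{L^2}^4/(4C)\big)>1$.

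The main obstacle is to make $\kappa$ depend only on $d,d_i,\mathbf{c},\theta,p$ and not on $\mathbf{B}$: this requires a lower bound $\|W\|_{L^2}\geq\delta_0>0$ together with matching upper bounds on $\|W\|_\infty$ and $C$, uniform over all admissible configurations $(P_1,\dots,P_k)$. I would obtain this by compactness — the set of tuples $(P_i)$ of orthogonal projections of prescribed ranks satisfying $\sum_i c_iP_i=\mathrm{id}$ is a nonempty compact subset of a product of Grassmannians, the relevant norms depend continuously on $(P_i)$, and $\|W\|_{L^2}>0$ pointwise by the previous paragraph — or, more explicitly, by evaluating $\int_{\R^d}\phi\,W$ for a fixed non-quadratic $\phi$ adapted to an index $j$ with $c_j\neq\theta_j$ (e.g. a degree-four Hermite polynomial in the $\R^{d_j}$-variables), which reduces the estimate to a one-variable $\Gamma$-function computation in $p,p_j,c_j,\theta_j$. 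Finally, for data that are not gaussian-extremisable the reduction to geometric position is unavailable; there one repeats the argument with a near-extremiser $g_A$ (with $R(A)$ within a factor $1-\delta$ of the supremum in \eqref{ag}), taking $\delta$ small enough that the now-nonvanishing gaussian-direction derivatives are dominated by the transverse gain, or alternatively one perturbs the datum itself to force extremisability as in \cite[Section 8]{BCCT2008} and passes to a limit; these are routine modifications that I would relegate to the end.
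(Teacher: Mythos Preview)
Your first-variation approach is correct in outline and close in spirit to the paper's, but the route through geometric position creates an unnecessary obstacle and your proposed workaround for non-extremisable data is where the argument is genuinely incomplete. The paper avoids this entirely by a simple reframing: rather than beating the \emph{extremising} gaussian, it shows that for \emph{every} positive-definite $A$ one has $\ABL(\mathbf{B},\mathbf{c},\theta,p)\geq (1+\eta)\,\ABLg(\mathbf{B},\mathbf{c},\theta,p;A)$ with $\eta$ independent of $A$ and $\mathbf{B}$. After a linear change of variables normalising $A=I$ (not the datum to geometric position), the projections $P_i=B_i^*(B_iB_i^*)^{-1}B_i$ are arbitrary orthogonal projections of rank $d_i$, and the question of extremisability never arises.

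The second difference is the choice of perturbation. You take $\phi=W/\|W\|_\infty$ and then need a uniform lower bound on $\|W\|_{L^2}$ over all admissible projection tuples, which you propose to obtain by compactness. The paper instead picks an index $j$ with $p_j<p$ (such a $j$ exists since $\mathbf{c}\neq\theta$) and takes $h=-f\,\mathbf{1}_{\Gamma\cap B(0,R)^c}$, where $\Gamma=\{x:\langle P_jx,x\rangle\geq \alpha|x|^2\}$ is a cone on which the $j$th term in your $W$ dominates. The point is that the spherical measure of $\Gamma$ depends only on $\operatorname{rank}P_j=d_j$ and on $\alpha$ (which is determined by $p,p_j$), so the first-order gain $\int hF\geq p^{d/2}\int_{\Gamma\cap B(0,R)^c}e^{-\pi p|x|^2}\,dx$ is \emph{manifestly} independent of $\mathbf{B}$ without any compactness step. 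Your compactness argument can be salvaged once you adopt the ``beat every gaussian'' framing (the relevant projection tuples then range over a compact set), but as written it rests on the geometric-position reduction, which is not always available.
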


Our proof of Theorem \ref{ABLg<ABL} involves quantifying how  the adjoint Brascamp--Lieb functional
$$\Phi(f):=\frac{\|f\|_p}{\prod_{i=1}^k\|(B_i)_*f\|_{p_i}^{\theta_i}}$$
varies under certain carefully selected perturbations of an arbitrary (centred) gaussian input $f$. We begin with a suitably quantitative variational lemma.

\begin{lemma}\label{lemma:EL}
Suppose that $f:\mathbb{R}^d\rightarrow\mathbb{R}_{+}$ is a measurable function for which $(B_i)_*f\in L^{p_i}(\mathbb{R}^{d_i})$ for $1\leq i\leq k$. If $h:\mathbb{R}^d\rightarrow\mathbb{R}$ is a measurable function such that $h/f\in L^\infty(\mathbb{R}^d)$, then 
\begin{equation}\label{perturb}
\frac{\Phi(f+ \varepsilon h)}{\Phi(f)}=1+ \varepsilon\int h(x)\left(\frac{ f(x)^{p-1}}{\|f\|_p^p}-\sum_{i=1}^k\theta_i\frac{ f_i(B_ix)^{p_i-1}}{\|f_i\|_{p_i}^{p_i}}\right)dx+O( \varepsilon^2)
\end{equation}
as $ \varepsilon\rightarrow 0$.
Moreover, if $|h|\leq Cf$ for some positive constant $C$, then the implicit constant in the $O( \varepsilon^2)$ term may be chosen to depend only on $p, \mathbf{c}, \theta$ and $C$. 
\end{lemma}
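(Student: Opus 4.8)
The plan is to derive \eqref{perturb} by expanding each of the $k+1$ norm factors in $\Phi$ to first order in $\varepsilon$, with the hypothesis $h/f\in L^\infty$ --- equivalently $|h|\le Cf$ a.e.\ for some $C>0$, with $h$ necessarily vanishing a.e.\ on $\{f=0\}$ --- supplying the uniformity needed to control the second-order remainders; the ``moreover'' clause then follows by bookkeeping the constants that appear. First I would treat the numerator. For $0<\varepsilon<1/C$ the function $f+\varepsilon h$ is non-negative, and on $\{f>0\}$ we write $f+\varepsilon h=f(1+\varepsilon h/f)$; the scalar inequality $|(1+t)^p-1-pt|\le C_p\,t^2$ for $|t|\le 1/2$ then gives $|(f+\varepsilon h)^p-f^p-p\varepsilon f^{p-1}h|\le C_{p,C}\,\varepsilon^2 f^p$ pointwise (with the convention that integrands carrying the singular factor $f^{p-1}$ vanish on $\{f=0\}$). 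Since $\int f^{p-1}|h|\le C\|f\|_p^p<\infty$ --- finiteness of $\|f\|_p$ being clear for the gaussian inputs of Theorem \ref{ABLg<ABL}, and in general a consequence of the upper bound \eqref{lo} when $\BL(\mathbf{B},\mathbf{c})<\infty$ --- integrating and then taking $p$-th roots (the linear coefficient being at most $pC$ in absolute value) yields $\|f+\varepsilon h\|_p=\|f\|_p\bigl(1+\varepsilon\|f\|_p^{-p}\int f^{p-1}h+O(\varepsilon^2)\bigr)$.

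Next I would run the same argument for each pushforward. Since pushforward is linear and order-preserving, $(B_i)_*(f+\varepsilon h)=f_i+\varepsilon h_i$ with $h_i\coloneqq(B_i)_*h$ obeying $|h_i|\le(B_i)_*|h|\le Cf_i$, so the numerator estimate applies verbatim with $p$ replaced by $p_i$ (using $\int f_i^{p_i-1}|h_i|\le C\|f_i\|_{p_i}^{p_i}<\infty$, which is where the hypothesis $(B_i)_*f\in L^{p_i}$ enters), giving $\|f_i+\varepsilon h_i\|_{p_i}=\|f_i\|_{p_i}\bigl(1+\varepsilon\|f_i\|_{p_i}^{-p_i}\int f_i^{p_i-1}h_i+O(\varepsilon^2)\bigr)$. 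Writing $h=h^+-h^-$ with $0\le h^\pm\le Cf$ and applying the defining identity \eqref{adj-def} of the pushforward to $h^\pm$ with test function $F=f_i^{p_i-1}$ (again using the vanishing convention) identifies $\int_{\R^{d_i}}f_i^{p_i-1}h_i=\int_{\R^d}h(x)\,f_i(B_ix)^{p_i-1}\,dx$. It then remains to substitute the three expansions into $\Phi(f+\varepsilon h)/\Phi(f)=\bigl(\|f+\varepsilon h\|_p/\|f\|_p\bigr)\prod_{i=1}^k\bigl(\|f_i\|_{p_i}/\|f_i+\varepsilon h_i\|_{p_i}\bigr)^{\theta_i}$, expand the $(k+1)$-fold product of factors of the form $1+O(\varepsilon)$ (each linear coefficient bounded in absolute value by a constant depending only on $C$), and collect the $\varepsilon^1$ terms using $\theta_1+\cdots+\theta_k=1$; this gives \eqref{perturb}. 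The resulting $O(\varepsilon^2)$ constant depends only on the Taylor constants $C_{p_i}$ --- functions of the $p_i$, hence of $p,\mathbf{c},\theta$ by \eqref{pi-def} --- together with $k$ and $C$, which is exactly the claimed dependence.

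The computation is essentially routine and I expect no genuine obstacle; the two points that need care are the ones already visible in the statement. First, the negative exponents $p_i-1<0$ make $f^{p-1}$ and $f_i^{p_i-1}$ singular on the null sets $\{f=0\}$, $\{f_i=0\}$; since $h$ and $h_i$ vanish there, the right convention is that the corresponding integrands vanish, and the finiteness bounds $\int f_i^{p_i-1}|h_i|\le C\|f_i\|_{p_i}^{p_i}$ are then the correct tool for dominating the error integrals. Second, $h$ is only required to be measurable, not signed-definite, so the splitting $h=h^+-h^-$ is needed both to invoke \eqref{adj-def} (stated for non-negative functions) and to keep $f+\varepsilon h$ and $f_i+\varepsilon h_i$ non-negative for $0<\varepsilon<1/C$. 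The only real work is to verify that each of the $k+1$ expansion factors has a linear coefficient bounded by a constant depending only on $p$ and $C$, so that the product expansion carries an $O(\varepsilon^2)$ remainder with the advertised dependence on $p,\mathbf{c},\theta,C$.
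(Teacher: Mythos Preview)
Your proposal is correct and follows essentially the same approach as the paper: Taylor-expand each of the $k+1$ norm factors via $(f+\varepsilon h)^p=f^p(1+\varepsilon h/f)^p$, use the pushforward identity \eqref{adj-def} to rewrite $\int f_i^{p_i-1}h_i$ as $\int h\,f_i(B_i\cdot)^{p_i-1}$, and multiply out. You are simply more explicit than the paper about the null-set convention, the $h=h^+-h^-$ splitting, and the bookkeeping of constants (the paper dispatches the uniformity claim in a single sentence invoking Taylor's theorem); one minor quibble is that the condition $\theta_1+\cdots+\theta_k=1$ is not actually needed to collect the first-order terms.
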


\begin{proof}
Observe that if $h/f$ is bounded then
\begin{eqnarray*}
\begin{aligned}
\|f+ \varepsilon h\|_p^p&=\int f^p\left(1+ \varepsilon p\frac{h}{f}+O( \varepsilon^2)\right)\\&=\|f\|_p^p\left(1+ \varepsilon p\frac{\int f^{p-1}h}{\int f^p}+O( \varepsilon^2)\right),
\end{aligned}
\end{eqnarray*}
so that 
\begin{eqnarray*}
\begin{aligned}
\Phi(f+ \varepsilon h)=\Phi(f)\left(1+ \varepsilon\frac{\int f^{p-1}h}{\int f^p}+O( \varepsilon^2)\right)\prod_{i=1}^k\left(1- \varepsilon\theta_i\frac{\int f_i^{p_i-1}h_i}{\int f_i^{p_i}}+O( \varepsilon^2)\right).
\end{aligned}
\end{eqnarray*}
By \eqref{adj-def},
\begin{eqnarray*}
\begin{aligned}
\frac{\Phi(f+ \varepsilon h)}{\Phi(f)}&=1+ \varepsilon\left(\frac{\int f^{p-1}h}{\int f^p}-\sum_{i=1}^k\theta_i\frac{\int f_i^{p_i-1}h_i}{\int f_i^{p_i}}\right)+O( \varepsilon^2)\\
&=1+ \varepsilon\left(\frac{\int f^{p-1}h}{\int f^p}-\sum_{i=1}^k\theta_i\frac{\int f_i(B_i\cdot)^{p_i-1}h}{\int f_i^{p_i}}\right)+O( \varepsilon^2)\\
&=1+ \varepsilon\int h(x)\left(\frac{ f(x)^{p-1}}{\|f\|_p^p}-\sum_{i=1}^k\theta_i\frac{ f_i(B_ix)^{p_i-1}}{\|f_i\|_{p_i}^{p_i}}\right)dx+O( \varepsilon^2).
\end{aligned}
\end{eqnarray*}
The uniformity claims are a straightforward consequence of Taylor's theorem.
\end{proof}

In order to prove Theorem \ref{ABLg<ABL} we show that there exists $\eta>0$, depending only on $d, d_i, \mathbf{c}, \theta, p$, such that
$$\frac{\ABL(\mathbf{B},\mathbf{c},\theta,p)}{\ABLg(\mathbf{B},\mathbf{c},\theta,p;A)}\geq 1+\eta$$
for all $A$, where $\ABLg(\mathbf{B},\mathbf{c},\theta,p;A)$ denotes the adjoint Brascamp--Lieb constant when testing against the (centred) gaussian $f(x) = e^{-\pi \langle x, Ax \rangle}$.

By applying a linear change of variables to $\R^d$ (and adjusting $\mathbf{B}$ appropriately), we may normalize so that $A=I$.  Setting $f(x)=e^{-\pi|x|^2}$ we have $$\ABLg(\mathbf{B},\mathbf{c},\theta,p;A)=\Phi(f),$$ and so by Lemma \ref{lemma:EL} it remains to construct a suitable function $h$ for which the coefficient of $ \varepsilon$ on the right hand side of \eqref{perturb} is bounded below by a positive constant depending only on $d, d_i, \mathbf{c}, \theta, p$.
To this end let
\begin{eqnarray*}
\begin{aligned}
F(x)&=\frac{f(x)^{p-1}}{\|f\|_p^p}-\sum_{i=1}^k\theta_i\frac{f_i(B_ix)^{p_i-1}}{\|f_i\|_{p_i}^{p_i}}\\
&=p^{\frac{d}{2}}e^{-\pi(p-1)|x|^2}-\sum_{i=1}^k\theta_ip_i^{\frac{d_i}{2}}e^{-\pi(p_i-1)\langle P_i x,x\rangle} \\
&=p^{\frac{d}{2}}e^{\pi(1-p)|x|^2}\Bigl(1-p^{-\frac{d}{2}}\sum_{i=1}^k\theta_i p_i^{\frac{d_i}{2}}e^{\pi(1-p_i)\langle P_i x,x\rangle-\pi(1-p)|x|^2}\Bigr),
\end{aligned}
\end{eqnarray*}
where $P_i \colon \R^d \to \R^d$ denote the orthogonal projection
$$P_i \coloneqq B_i^*(B_i B_i^*)^{-1}B_i.$$
Recall the scaling conditions $\sum_i\theta_i=1$ and $\sum_i c_id_i=d$. Since $\theta\not=\mathbf{c}$ it follows that $\theta_j<c_j$ and hence $p_j<p$ for some $j$ by \eqref{pi-def}. With this choice of $j$ observe that
\begin{eqnarray*}
\begin{aligned}
p^{-\frac{d}{2}}\sum_{i=1}^k\theta_i p_i^{\frac{d_i}{2}}e^{\pi(1-p_i)\langle P_i x,x\rangle-\pi(1-p)|x|^2}&\geq
p^{-\frac{d}{2}}\theta_j p_j^{\frac{d_j}{2}}e^{\pi(1-p_j)\langle P_j x,x\rangle-\pi(1-p)|x|^2}\\
&\geq p^{-\frac{d}{2}}\theta_jp_j^{\frac{d_j}{2}}e^{\frac{\pi}{2}(p-p_j)|x|^2}
\end{aligned}
\end{eqnarray*}
on the cone
\begin{equation}\label{coneA}
\Gamma \coloneqq \Bigl\{x\in\mathbb{R}^d:\langle P_j x,x\rangle\geq \frac{1-\frac{1}{2}(p+p_j)}{1-p_j}|x|^2\Bigr\}.
\end{equation}
Choosing $R>0$ such that
$$p^{-\frac{d}{2}}\theta_jp_j^{\frac{d_j}{2}}e^{\frac{\pi}{2}(p-p_j)R^2}=2,$$ it follows that
$$
F(x)\leq -p^{\frac{d}{2}}e^{\pi(1-p)|x|^2}
$$
whenever $x\in\Gamma\cap B(0,R)^c$. 
If $h(x)=-f(x)1_{\Gamma\cap B(0,R)^c}(x)$ then the coefficient of $ \varepsilon$ in \eqref{perturb}
equals
\begin{eqnarray*}
\begin{aligned}
\int_{\mathbb{R}^d}h(x)F(x)dx&= -\int_{\Gamma\cap B(0,R)^c}e^{-\pi|x|^2}F(x)dx&\geq p^{\frac{d}{2}}\int_{\Gamma\cap B(0,R)^c} e^{-\pi p|x|^2}dx.
\end{aligned}
\end{eqnarray*}
Now, since $P_j$ is an orthogonal projection, the set $\Gamma\cap \mathbb{S}^{d-1}$ has measure depending only on $d, d_i, \mathbf{c}, \theta, p$ with respect to surface measure on $\mathbb{S}^{d-1}$. Hence by a use of polar coordinates,
$$
\int_{\mathbb{R}^d}h(x)F(x)dx
$$
is bounded below by a positive constant depending only on $d, d_i, \mathbf{c}, \theta, p$. Finally we note that $|f/h|, |f_i/h_i|\leq 1$ everywhere, allowing us to control the $O( \varepsilon^2)$ term in Lemma \ref{lemma:EL} suitably uniformly. The theorem now follows on taking $ \varepsilon$ to be a sufficiently small constant depending only on $d, d_i, \mathbf{c}, \theta, p$.  


\section{Strict inequality in the upper bound}\label{Section:abl-bl}
In this section we discuss the sharpness of the inequality
\begin{equation}\label{ableq}
 \ABL(\mathbf{B}, \mathbf{c}, \mathbf{\theta}, p) \leq \BL(\mathbf{B}, \mathbf{c})^{\frac{1}{p}-1}
\end{equation}
in Theorem \ref{main-cts-thm}.  It is certainly possible for equality to hold here.  For instance, for the data associated to the Loomis--Whitney and adjoint Loomis--Whitney bounds in Theorem \ref{lw-alw} (setting each of the $\Omega_j$ to be a Euclidean space), both sides of \eqref{ableq} are equal to one.  However, in situations where the only extremals of the Brascamp--Lieb inequality are gaussians, Theorem \ref{ABLg<ABL} suggests that strict inequality should occur in \eqref{ableq}.  We may formalize this intuition as follows.

\begin{definition}[Gaussian stability]\label{stability-def}  A Brascamp--Lieb datum $(\mathbf{B},\mathbf{c})$ is \emph{gaussian-stable} if $\BL(\mathbf{B},\mathbf{c})$ is finite, and whenever $f_i \colon \R^{d_i} \to \R^+$ are integrable functions such that
$$
 \int_{\R^d} \prod_{i=1}^k f_i^{c_i} \circ B_i > (1-\eps) \BL(\mathbf{B}, \mathbf{c}) \prod_{i=1}^k \left(\int_{\R^{d_i}} f_i\right)^{c_i}$$
for some $\eps>0$, then there exist (uncentered) gaussians $\tilde f_i \colon \R^{d_i} \to \R^+$ such that
$$ \| f_i - \tilde f_i \|_{L^1(\R^{d_i})} \leq o\left( \|f_i \|_{L^1(\R^{d_i})}\right)$$
where $o(X)$ denotes a quantity bounded in magnitude by $c(\eps) X$ for some function $c(\eps)$ (depending on the Brascamp--Lieb datum) that tends to zero as $\eps \to 0$.
\end{definition}

For instance, it was shown in \cite{christ-young} that the Brascamp--Lieb data corresponding to non-endpoint cases of Young's 
inequality (Example \ref{Example:Young}) are gaussian-stable.

\begin{theorem}  Let $(\mathbf{B},\mathbf{c})$ be a gaussian-stable Brascamp--Lieb datum, let $0 < p < 1$, and let $\theta_1,\dots,\theta_k>0$ sum to $1$.  Then we have strict inequality in \eqref{ableq}.
\end{theorem}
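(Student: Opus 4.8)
The plan is to argue by contradiction. The bound \eqref{ableq} is already proved in Theorem \ref{main-cts-thm}, so it suffices to rule out equality; suppose then that $\ABL(\mathbf{B},\mathbf{c},\mathbf{\theta},p)=\BL(\mathbf{B},\mathbf{c})^{\frac{1}{p}-1}$, and fix a maximising sequence $f^{(n)}\colon\R^d\to\R^+$ for the adjoint Brascamp--Lieb functional $\Phi$ of Section \ref{Section:ablg-abl}, so that $\Phi(f^{(n)})\to\BL(\mathbf{B},\mathbf{c})^{\frac{1}{p}-1}$. The contradiction will come from the strict inequality
$$\ABLg(\mathbf{B},\mathbf{c},\mathbf{\theta},p)=c(\mathbf{c},\mathbf{\theta},\mathbf{d},p)\,\BL(\mathbf{B},\mathbf{c})^{\frac{1}{p}-1}<\BL(\mathbf{B},\mathbf{c})^{\frac{1}{p}-1},$$
valid because $0<p<1$ and because $\mathbf{c}\neq\mathbf{\theta}$: if $\mathbf{c}=\mathbf{\theta}$ then $\sum_i c_i=1$, so by Example \ref{Example:Holder} the datum is a H\"older datum and \emph{every} $g$ with $0<\int g<\infty$ --- gaussian or not --- yields equality in \eqref{blc} on taking $f_i=g$ for all $i$, so $(\mathbf{B},\mathbf{c})$ cannot be gaussian-stable; strictness then follows from Remark \ref{Remark:degenerate case}. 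The strategy is to show that a maximising sequence must be asymptotically gaussian, which is incompatible with this strict inequality.

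The mechanism is read off from the proof of \eqref{lo} in Section \ref{adj-sec}. Write $f_i:=(B_i)_*f$ and $h_i:=f_i^{p_i}$; recall that \eqref{lw-conseq2} is precisely the instance of \eqref{blc} with $i$-th function $h_i$, and that the passage from \eqref{id2} and \eqref{lw-conseq2} to \eqref{afterholder} is an application of the $(k+1)$-linear H\"older inequality. Keeping track of the constants, $\Phi(f)^p\,\BL(\mathbf{B},\mathbf{c})^{p-1}$ equals the product of the normalised deficit in that H\"older inequality and the $(1-p)$-th power of the normalised deficit in \eqref{lw-conseq2}, both factors lying in $(0,1]$. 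Hence $\Phi(f^{(n)})\to\BL(\mathbf{B},\mathbf{c})^{\frac{1}{p}-1}$ forces both deficits to tend to $1$. Asymptotic equality in \eqref{lw-conseq2} together with gaussian-stability (Definition \ref{stability-def}) produces uncentred gaussians $\widetilde h_i^{(n)}$ on $\R^{d_i}$ with $\|h_i^{(n)}-\widetilde h_i^{(n)}\|_{L^1(\R^{d_i})}=o\bigl(\|h_i^{(n)}\|_{L^1(\R^{d_i})}\bigr)$. Asymptotic equality in the H\"older step, combined with the (elementary) stability of the $(k+1)$-linear H\"older inequality --- near-extremisers are close to proportional --- yields that $f^{(n)}$ is close to a positive multiple of $\bigl(\prod_{i=1}^k h_i^{(n)}(B_i\,\cdot)^{c_i}\bigr)^{1/p}$; the exponents work out because \eqref{pi-def} gives $\theta_i(1-p_i)+c_ip_i=c_ip_i/p$. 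Combining the two approximations, $f^{(n)}$ is close to $F^{(n)}:=\Lambda^{(n)}\bigl(\prod_{i=1}^k\widetilde h_i^{(n)}(B_i\,\cdot)^{c_i}\bigr)^{1/p}$ for suitable scalars $\Lambda^{(n)}>0$, and $F^{(n)}$ is a genuine (uncentred) gaussian on $\R^d$ --- non-degenerate since finiteness of $\BL(\mathbf{B},\mathbf{c})$ forces $\bigcap_i\ker B_i=\{0\}$ and hence positive-definiteness of $\sum_i c_iB_i^*A_iB_i$ whenever the $A_i$ are positive-definite. It then remains to conclude $\Phi(f^{(n)})=(1+o(1))\Phi(F^{(n)})\leq(1+o(1))\ABLg(\mathbf{B},\mathbf{c},\mathbf{\theta},p)$, which contradicts $\Phi(f^{(n)})\to\BL(\mathbf{B},\mathbf{c})^{\frac{1}{p}-1}>\ABLg(\mathbf{B},\mathbf{c},\mathbf{\theta},p)$.

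The main obstacle is the quantitative bookkeeping in the two ``is close to'' steps, and in particular converting between the different senses of closeness: gaussian-stability naturally supplies an $L^1$ bound on $h_i^{(n)}=((B_i)_*f^{(n)})^{p_i}$, whereas comparing the functionals at the end needs control of $f^{(n)}$ in $L^p$ and of $(B_i)_*f^{(n)}$ in $L^{p_i}$, with $p,p_i<1$ sub-additive rather than convex, so that a relative $L^1$ error in $h_i$ need not give a comparable relative error in $f_i=h_i^{1/p_i}$ on the set where the functions are small. I expect this to require two preliminary reductions. First, exploiting that the ratio $\Phi(f)\,\BL(\mathbf{B},\mathbf{c})^{1-\frac{1}{p}}$, and the property of being gaussian-stable with its modulus, are invariant under the rescalings $B\mapsto BC$ and $B_i\mapsto D_iB_i$ with $C,D_i$ invertible --- together with the truncations $f\mapsto\min(f,N)1_{B(0,R)}$ and perturbations $f\mapsto f+\delta 1_{B(0,R)}$, which preserve near-maximality in the appropriate limits --- one reduces to a maximising sequence whose members are uniformly bounded above, supported in a fixed ball, and bounded below by a positive constant on that ball, so that all the relevant $L^q$ norms become mutually comparable uniformly in $n$ and the gaussians $\widetilde h_i^{(n)}$, $F^{(n)}$ range over a compact family on which $\Phi$ is uniformly continuous. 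Second, one records an explicit modulus-of-continuity form of the stability of the $(k+1)$-linear H\"older inequality, which follows from the pointwise weighted arithmetic mean--geometric mean inequality by integration. With these two ingredients in hand the remaining estimates are routine, and the proof concludes as sketched above.
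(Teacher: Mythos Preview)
Your overall strategy matches the paper's: assume equality, take a near-maximiser, observe that both the H\"older step and the Brascamp--Lieb step in Section~\ref{adj-sec} must be near-sharp, apply gaussian-stability to the latter to approximate the $h_i$ by gaussians, combine with H\"older stability to approximate $f$ by a gaussian $F$, and contradict Theorem~\ref{ABLg<ABL}. Your observation that gaussian-stability forces $\mathbf{c}\neq\mathbf{\theta}$ is also correct and necessary.

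The gap is in your proposed compactness reduction. You claim one can arrange a maximising sequence whose members are \emph{uniformly} bounded above and below on a \emph{fixed} ball. But the truncation and perturbation parameters $N,R,\delta$ needed to keep $\Phi(\min(f,N)1_{B(0,R)}+\delta 1_{B(0,R)})$ within $o(1)$ of $\Phi(f)$ depend on the particular $f$, and there is no mechanism to make them uniform along the sequence: after rescaling to a unit ball the amplitude bounds $N',\delta'$ may still degenerate with $n$. Consequently the claim that the approximating gaussians $\widetilde h_i^{(n)}$ and $F^{(n)}$ range over a compact family, and that the passage from $L^1$-closeness of $h_i$ to $L^{p_i}$-closeness of $f_i=h_i^{1/p_i}$ is ``routine'', is not justified --- the implicit constants in those estimates depend on the very bounds you are trying to establish.

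The paper's proof avoids compactness entirely. It works with a single near-maximiser, uses Markov's inequality to convert the $L^1$ closeness $\|f_i^{p_i}-\tilde f_i^{p_i}\|_1=o(\|f_i\|_{p_i}^{p_i})$ into pointwise closeness $\tilde f_i=(1+o(1))f_i$ outside small exceptional sets $E_i$, and similarly for the H\"older step (via strict convexity of the exponential). The key device that replaces your compactness is the \emph{reverse H\"older inequality for gaussians}: since the approximant $\tilde f$ is a genuine gaussian, smallness of $\int_E\tilde f^p$ relative to $\int\tilde f^p$ implies smallness of $\int_E\tilde f$ relative to $\int\tilde f$, with constants depending only on $p$ and the dimension --- not on the particular gaussian. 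This is what lets one push the exceptional-set control forward through the $B_i$ and conclude $\|(B_i)_*\tilde f\|_{p_i}\leq(1+o(1))\|f_i\|_{p_i}$ and $\|\tilde f\|_p=(1+o(1))\|f\|_p$, yielding $\ABLg\geq(1-o(1))\ABL$ without any uniformity hypothesis on $f$.
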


\begin{proof}  By a rescaling we may normalize so that $\BL(\mathbf{B}, \mathbf{c})=1$.

Suppose for contradiction that \eqref{ableq} holds with equality.  Let $\eps>0$ be a small parameter to be chosen later, then we can find a non-negative $f \in L^p(\R^d)$, not identically zero, such that $(B_i)_* f \in L^{p_i}(\R^{d_i})$ for all $i$ and
$$ \| f \|_{L^p(\R^d)} = (1-o(1)) \prod_{i=1}^k \| (B_i)_* f \|_{L^{p_i}(\R^{d_i})}^{\theta_i}$$
where $o()$ is as in Definition \ref{stability-def}, except that we now also permit the decay function $c()$ to depend on $p$ and $\theta_1,\dots,\theta_k$ in addition to $\mathbf{B}$ and $\mathbf{c}$.  By multiplying $f$ by a constant, we may normalize
$$ \|f\|_{L^p(\R^d)}=1$$
so that
\begin{equation}\label{bif}
\prod_{i=1}^k \| (B_i)_* f \|_{L^{p_i}(\R^{d_i})}^{\theta_i} = 1+o(1).
\end{equation}
As in Section \ref{adj-sec}, we set $f_i \coloneqq (B_i)_* f$ and $g_i \coloneqq f_i^{c_i p_i}$, then an inspection of the arguments in that section show that \eqref{lw-conseq2} must hold up to $o(1)$ errors, in the sense that
$$
\int_{\R^d} \prod_{i=1}^k g_i = (1-o(1)) \prod_{i=1}^k \|f_i\|_{L^{p_i}(\R^{d_i})}^{c_i p_i}.
$$
Applying the gaussian-stable hypothesis (with the $f_i$ replaced by $g_i^{1/c_i} = f_i^{p_i}$), we conclude that there exist uncentered gaussians $\tilde f_i$ such that
$$ \| f_i^{p_i} - \tilde f_i^{p_i} \|_{L^1(\R^{d_i})} = o\left( \| f_i \|_{L^{p_i}(\R^{d_i})}^{p_i} \right).$$
By Markov's inequality, this implies that
\begin{equation}\label{fai-0}
 \tilde f_i = (1+o(1)) f_i
\end{equation}
uniformly outside of an exceptional set $E_i \subset \R^{d_i}$ with
\begin{equation}\label{fai}
 \int_{E_i} f_i^{p_i} + \tilde f_i^{p_i} = o\left( \| f_i \|_{L^{p_i}(\R^{d_i})}^{p_i} \right)
\end{equation}
so in particular
\begin{equation}\label{fai-2}
\int_{\R^{d_i}} \tilde f_i^{p_i} = (1+o(1)) \| f_i \|_{L^{p_i}(\R^{d_i})}^{p_i}.
\end{equation}
From the adjoint Brascamp--Lieb inequality, \eqref{abli} and \eqref{bif} we have
$$ \| f 1_{B_j^{-1} E_j} \|_{L^p(\R^d)} = o\left(\prod_{i=1}^k \| (B_i)_* f \|_{L^{p_i}(\R^{d_i})}^{\theta_i}\right) = o(1)$$
for every $j=1,\dots,k$.  Thus
$$ \int_{\bigcup_{j=1}^k B_j^{-1} E_j} f^p = o( 1 ).$$
Next, define the normalized functions
\begin{align*}
G_i &\coloneqq f (g_i\circ B_i)^{-\frac{1-p}{\theta_i p}} / \|f_i\|_{L^{p_i}(\R^{d_i})}^{p_i} \\
H &\coloneqq \prod_{i=1}^k g_i\circ B_i / \|f_i\|_{L^{p_i}(\R^{d_i})}^{c_i p_i}
\end{align*}
on $\R^d$ for $i=1,\dots,k$, so by \eqref{bif} we have
$$ 
H^{1-p} \prod_{i=1}^k G_i^{\theta_i p} =(1-o(1)) f^p.
$$
An inspection of the arguments in Section \ref{adj-sec} reveals that
$$ \int_{\R^d} G_i, \int_{\R^d} H \leq 1+o(1)$$
and
$$ \int_{\R^d} H^{1-p} \prod_{i=1}^k G_i^{\theta_i p} = \int_{\R^d} f^p = 1+o(1)$$
and thus
$$ \int_{\R^d}(1-p) H + \sum_{i=1}^k \theta_i p G_i - H^{1-p} \prod_{i=1}^k G_i^{\theta_i p} = o(1);$$
the integrand is non-negative thanks to the weighted AM-GM inequality, or by Jensen's inequality applied to the exponential function. 
In view of the strict convexity of the exponential function (and homogeneity) we see that if
$$|G_i(x) - H(x)| \geq \delta H(x)
$$
for some $1\leq i\leq k$, $x \in \R^d$ and $\delta > 0$, then
\begin{equation}\label{discbound}
H(x) \leq C(p,\theta_1,\dots,\theta_k,\delta) \left((1-p) H(x) + \sum_{i=1}^k \theta_i p G_i(x) - H^{1-p}(x)\prod_{i=1}^k G_i(x)^{\theta_i p} \right)
\end{equation}
for some constant $C(p,\theta_1,\dots,\theta_k,\delta)$ depending on the indicated parameters. From this and Markov's inequality we conclude that
$$ G_i(x) = (1+o(1)) H(x)$$
for all $1\leq i\leq k$ and all $x$ outside of an exceptional set $E\subset \R^d$ with
$$\int_{E} H = o(1).$$
In particular, we have
$$ H^{1-p} \prod_{i=1}^k G_i^{\theta_i p}(x) = (1+o(1)) H(x)$$
for $x \not \in E$, and
$$ \int_E H^{1-p} \prod_{i=1}^k G_i^{\theta_i p} + H = o(1),$$
by H\"older's inequality.
Undoing the normalizations, we conclude that
$$ f(x)^p = (1+o(1)) f_*(x)^p$$
for $x \not \in E$ and
\begin{equation}\label{ef}
 \int_E f^p + f_*^p = o(1)
\end{equation}
where
$$ f_* \coloneqq \prod_{i=1}^k (f_i\circ B_i)^{c_i p_i/p} /  \|f_i\|_{L^{p_i}(\R^{d_i})}^{(c_i p_i - \theta_ip)/p}.$$ 
If we let $\tilde E \coloneqq E \cup \bigcup_{i=1}^k B_i^{-1} E_i$, we conclude that
\begin{equation}\label{ftf}
 f(x) = (1+o(1)) \tilde f(x)
\end{equation}
for $x \not \in \tilde E$ and
$$ \int_{\tilde E} f^p = o(1),$$
where $\tilde f$ is the gaussian function
$$ \tilde f \coloneqq \prod_{i=1}^k (\tilde f_i\circ B_i)^{c_i p_i/p} /  \|f_i\|_{L^{p_i}(\R^{d_i})}^{(c_i p_i - \theta_ip)/p}.$$
Also, from the Brascamp--Lieb inequality and \eqref{fai}, \eqref{fai-2} we have
$$ \int_{B_i^{-1} E_i} \tilde f^p = o(1)$$
for all $i=1,\dots,k$, while from \eqref{ef}, \eqref{fai-0} we have
$$ \int_{E \backslash \bigcup_{i=1}^k B_i^{-1} E_i} \tilde f^p = o( 1 ),$$
and thus
$$ \int_{\tilde E} \tilde f^p = o(1).$$
In particular we have
\begin{equation}\label{rad}
\begin{split}
 \int_{\R^d} \tilde f^p &= \int_{\R^d \backslash \tilde E} \tilde f^p + o(1) \\
&= (1+o(1)) \int_{\R^d \backslash \tilde E} f^p + o(1)\\
&= 1+o(1) 
\end{split}
\end{equation}
and hence
$$ \int_{\tilde E} \tilde f^p = o\left(\int_{\R^d} \tilde f^p\right).$$
Next let $q>1$ and choose $0<\theta<1$ such that $1=\frac{\theta}{p}+\frac{1-\theta}{q}$, so that by H\"older's inequality,
$$
 \int_{\tilde E} \tilde f\leq\|\tilde f\|_{L^p(\tilde E)}^\theta\|\tilde f\|_{L^q(\tilde E)}^{1-\theta}=o\left(\|\tilde f\|_{L^p(\mathbb{R}^d)}^\theta\|\tilde f\|_{L^q(\mathbb{R}^d)}^{1-\theta}\right).
$$
Since $\tilde f$ is a gaussian it satisfies a reverse H\"older inequality on $\mathbb{R}^d$, and so it follows that
$$ \int_{\tilde E} \tilde f = o\left(\int_{\R^d} \tilde f\right).$$
Pushing forward by $B_i$, we conclude that
$$ \int_{\R^{d_i}} (B_i)_* (1_{\tilde E} \tilde f) = o\left(\int_{\R^{d_i}} (B_i)_* \tilde f\right).$$
By Markov's inequality it follows that
$$ (B_i)_* (1_{\tilde E} \tilde f) = o( (B_i)_* (\tilde f) )$$
outside of an exceptional set $E'_i \subset \R^{d_i}$ with
$$ \int_{E'_i} (B_i)_* \tilde f = o\left(\int_{\R^{d_i}} (B_i)_* \tilde f\right).$$
Note that we can assume that $E'_i$ contains $E_i$.  Since $(B_i)_* \tilde f$ is also gaussian, we conclude from a further appeal to H\"older's inequality that
\begin{equation}\label{eip}
 \int_{E'_i} ((B_i)_* \tilde f)^{p_i} = o\left(\int_{\R^{d_i}} ((B_i)_* \tilde f)^{p_i}\right).
\end{equation}
Outside of $E'_i$, we have
$$ (B_i)_* ((1-1_{\tilde E}) \tilde f) = (1-o(1)) (B_i)_* \tilde f$$
and hence by \eqref{ftf},
$$ f_i \geq (B_i)_* ((1-1_{\tilde E}) f) = (1-o(1)) (B_i)_* \tilde f.$$
In particular
$$ \int_{\R^{d_i} \backslash E'_i} ((B_i)_* \tilde f)^{p_i} \leq (1+o(1)) \|f_i\|_{L^{p_i}(\R^{d_i})}^{p_i}$$
and thus by \eqref{eip},
$$ \int_{\R^{d_i}} ((B_i)_* \tilde f)^{p_i} \leq (1+o(1)) \|f_i\|_{L^{p_i}(\R^{d_i})}^{p_i}.$$
Comparing this inequality with \eqref{rad} and Definition \ref{cts-adj}, we conclude that
$$ \ABLg( \mathbf{B}, \mathbf{c}, \mathbf{\theta}, p) \geq 1-o(1)$$
and thus 
$$ \ABLg( \mathbf{B}, \mathbf{c}, \mathbf{\theta}, p) \geq (1-o(1)) \ABL( \mathbf{B}, \mathbf{c}, \mathbf{\theta}, p).$$
This contradicts Theorem \ref{ABLg<ABL}.
\end{proof}


\section{Lower bounds for tomographic transforms}\label{Section:X-ray}
A simple yet effective application of the adjoint Brascamp--Lieb inequality is to lower bounds for various tomographic transforms between Lebesgue spaces. These bounds lie below $L^1$, in contrast with the more familiar upper bounds (see \cite{Christ1984}) that necessarily lie above $L^1$. 

We begin with the classical X-ray transform, which is governed by the adjoint Loomis--Whitney inequality from Section \ref{Section:loomis}.
Let $d \geq 2$. For a nonnegative measurable function $f \colon \R ^d\rightarrow \R^+$ its \emph{X-ray transform} $Xf \colon \mathcal{M}_{1,d} \to \R^+$ is given by the formula
\begin{equation}\label{Xray-def}
Xf(\omega,v) \coloneqq \int_{\R }f(v+t\omega)dt,
\end{equation}
where the variables $\omega\in\mathbb{S}^{d-1}$ and $v\in\langle\omega\rangle^\perp$ form the natural parametrization of the Grassmannian manifold $\mathcal{M}_{1,d}$ of lines in $\R^d$, endowed with the obvious measure
\begin{equation}\label{obvious}
 \int_{\mathcal{M}_{1,d}} F(\omega,v) \coloneqq \int_{\mathbb{S}^{d-1}} \left(\int_{\langle \omega \rangle^\perp} F(\omega,v)\ dv\right) d\sigma(\omega)
\end{equation}
where the inner integral on the right-hand side is with respect to Lebesgue measure $dv$ on $\langle \omega \rangle^\perp$, and $d\sigma$ denotes normalized Lebesgue measure on $\mathbb{S}^{d-1}$. This normalization is chosen so that $\|Xf\|_{L^1(\mathcal{M}_{1,d})}=\|f\|_{L^1(\mathbb{R}^d)}$.
\begin{theorem}[Lower bound for the $X$-ray transform]\label{theoremX}
Suppose that $0<p, q\leq 1$ and $d \geq 2$. Then there exists a positive constant $C = C_{p,q,d}$ such that
\begin{equation}\label{lowerX}
\|Xf\|_{L^q({\mathcal M}_{1,d})} \geq C\|f\|_{L^p(\R^d)}
\end{equation}
for all nonnegative measurable functions $f \colon \R^d \to \R^+$
if and only if
\begin{equation}\label{Xcond}
\frac{1}{d}\left(1-\frac{1}{q}\right)=\frac{1}{d-1}\left(1-\frac{1}{p}\right).
\end{equation}
Moreover, in all such cases we may take $C=1$.
\end{theorem}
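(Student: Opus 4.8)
The plan is to deduce the lower bound \eqref{lowerX} with the sharp constant $C=1$ from the adjoint Loomis--Whitney inequality \eqref{lw} by averaging over the choice of coordinate directions, and to obtain the necessity of \eqref{Xcond} from a scaling argument. The first observation is that for each $\omega\in\mathbb{S}^{d-1}$ the partial transform $X_\omega f(v):=Xf(\omega,v)=\int_{\R}f(v+t\omega)\,dt$ on $\langle\omega\rangle^\perp$ is precisely the pushforward of $f$ along the orthogonal projection onto $\langle\omega\rangle^\perp$; equivalently, if $\omega_1,\dots,\omega_d$ is an orthonormal basis of $\R^d$, then $X_{\omega_1}f,\dots,X_{\omega_d}f$ are exactly the $d$ marginals of $f$ in these coordinates. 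Since \eqref{Xcond} is the same as the relation $q=\tfrac{p(d-1)}{d-p}$, the balanced adjoint Loomis--Whitney inequality \eqref{lw} applied with each $\Omega_j=\R$ reads $\|f\|_{L^p(\R^d)}\le\prod_{j=1}^d\|X_{\omega_j}f\|_{L^q(\langle\omega_j\rangle^\perp)}^{1/d}$ for every orthonormal frame $\omega_1,\dots,\omega_d$.

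I would then take logarithms of this inequality (it is trivial when $f\equiv 0$), specialise to $\omega_j=Ue_j$ with $U\in O(d)$, and integrate over $O(d)$ against normalised Haar measure. Because $U\mapsto Ue_j$ pushes Haar measure forward to the uniform probability measure $\sigma$ on $\mathbb{S}^{d-1}$ for every $j$, the right-hand side collapses to a single term and one obtains $\log\|f\|_{L^p(\R^d)}\le\int_{\mathbb{S}^{d-1}}\log\|X_\omega f\|_{L^q(\langle\omega\rangle^\perp)}\,d\sigma(\omega)$. A single application of Jensen's inequality (concavity of $\log$, $\sigma$ being a probability measure) together with the normalisation \eqref{obvious} of the measure on $\mathcal{M}_{1,d}$ gives $\int_{\mathbb{S}^{d-1}}\log\|X_\omega f\|_{L^q}\,d\sigma=\tfrac1q\int_{\mathbb{S}^{d-1}}\log\bigl(\|X_\omega f\|_{L^q}^q\bigr)\,d\sigma\le\tfrac1q\log\Bigl(\int_{\mathbb{S}^{d-1}}\|X_\omega f\|_{L^q}^q\,d\sigma\Bigr)=\log\|Xf\|_{L^q(\mathcal{M}_{1,d})}$, and combining the two displays yields $\|f\|_{L^p(\R^d)}\le\|Xf\|_{L^q(\mathcal{M}_{1,d})}$, which is \eqref{lowerX} with $C=1$.

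For the necessity of \eqref{Xcond} I would apply \eqref{lowerX} to the dilates $f_\lambda(x):=f(\lambda x)$, $\lambda>0$, with $f$ a fixed bump function such as $1_{B(0,1)}$, for which both $\|f\|_{L^p(\R^d)}$ and $\|Xf\|_{L^q(\mathcal{M}_{1,d})}$ are finite and strictly positive. A change of variables gives $\|f_\lambda\|_{L^p(\R^d)}=\lambda^{-d/p}\|f\|_{L^p(\R^d)}$ and, using \eqref{Xray-def} and \eqref{obvious}, $\|Xf_\lambda\|_{L^q(\mathcal{M}_{1,d})}=\lambda^{-1-(d-1)/q}\|Xf\|_{L^q(\mathcal{M}_{1,d})}$; letting $\lambda\to 0$ and $\lambda\to\infty$ in \eqref{lowerX} then forces $\tfrac dp=1+\tfrac{d-1}{q}$, which rearranges to \eqref{Xcond}.

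The only points needing real care, and which I regard as the main obstacle, are the measurability and Fubini-type justification of the $O(d)$-average, together with the handling of the degenerate cases (when $\|X_\omega f\|_{L^q}$ vanishes on a set of positive $\sigma$-measure, or when $\|f\|_{L^p(\R^d)}=\infty$), all of which can be dispatched by working throughout in $[0,\infty]$ and setting aside the trivial case $f\equiv 0$; one should also verify that the normalisation built into \eqref{obvious} is exactly what makes the constant equal to $1$ rather than a dimensional factor. None of this is serious.
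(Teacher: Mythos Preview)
Your proof is correct and follows essentially the same line as the paper's: identify $X_\omega f$ with the pushforward $(P_\omega)_* f$, apply the balanced adjoint Loomis--Whitney inequality \eqref{lw} to an orthonormal frame, and then average over frames. The only cosmetic difference is that the paper raises to the $q^{\text{th}}$ power and applies AM--GM before averaging, whereas you take logarithms, average over $O(d)$, and apply Jensen; since AM--GM is precisely Jensen for the logarithm, the two arguments are the same in substance.
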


\begin{proof}
That \eqref{Xcond} is necessary for \eqref{lowerX} follows by a routine scaling argument, as is familiar from the analysis of upper bounds in \cite{Christ1984}.

For $\omega\in\mathbb{S}^{d-1}$ let $P_\omega$ denote the orthogonal projection of $\R ^{d}$ onto $\langle\omega\rangle^\perp$. 
If $\{\omega_1,\hdots,\omega_d\}$ is an orthonormal basis of $\mathbb{R}^d$ then by the Loomis--Whitney inequality (Theorem \ref{lw-alw}(i); see also Example \ref{Example:LW}) we have
\begin{equation}\label{LWrot}
\int_{\R ^{d}}\prod_{i=1}^df_i(P_{\omega_i}x)^{\frac{1}{d-1}}dx\leq \prod_{i=1}^d\left(\int_{\langle\omega_i\rangle^\perp}f_i\right)^{\frac{1}{d-1}}
\end{equation}
for any measurable $f_i \colon \langle \omega_i \rangle^\perp \to \R^+$, 
for which (as in Theorem \ref{lw-alw}(ii)) we have the adjoint inequality
\begin{equation}\label{adjrot}
\|f\|_{L^p(\R ^d)}\leq  \prod_{i=1}^d\|(P_{\omega_i})_*f\|_{L^q(\langle\omega_i\rangle^\perp)}^{\frac{1}{d}},
\end{equation}
thanks to \eqref{Xcond}. But from \eqref{Xray-def}, the Fubini--Tonelli theorem, and a change of variables we have
\begin{equation}\label{FT}
\|(P_{\omega_i})_*f\|_{L^q(\langle\omega_i\rangle^\perp)}=\|Xf(\omega_i,\cdot)\|_{L^q(\langle\omega_i\rangle^\perp)},
\end{equation}
and thus
\begin{equation}\label{adjrot1}
\|f\|_{L^p(\R ^d)}^{q}\leq \prod_{i=1}^d\|Xf(\omega_i,\cdot)\|_{L^q(\langle\omega_i\rangle^\perp)}^{\frac{q}{d}},
\end{equation}
which by the AM-GM inequality implies
\begin{equation}\label{adjrot2}
\|f\|_{L^p(\R ^d)}^{q}\leq \frac{1}{d}\sum_{i=1}^d\|Xf(\omega_i,\cdot)\|_{L^q(\langle\omega_i\rangle^\perp)}^{q}.
\end{equation}
The inequality \eqref{lowerX} now follows on averaging over all such bases $\{\omega_1,\hdots,\omega_d\}$ and then applying \eqref{obvious}.
\end{proof}
\begin{remark}
As should be evident from our proof, the $L^q({\mathcal M}_{1,d})$ norm in \eqref{lowerX} may be replaced by the mixed norm $L^r_\omega L^q_v$ for any $r>0$. A similar remark applies to all of the results in this section.
\end{remark}

\begin{remark}\label{Remark:constant}
Since we may take $C=1$ it is possible to take a limit in \eqref{lowerX} (raised to the $p^{\operatorname{th}}$ power) as $p\rightarrow 0$, obtaining
\begin{equation}\label{iso}
|\Omega|^{\frac{d-1}{d}}\leq\int_{\mathbb{S}^{d-1}}|P_{\omega}\Omega|d\sigma(\omega)
\end{equation}
for measurable subsets $\Omega\subseteq\R^d$.  This can be viewed as an averaged form of the classical Loomis--Whitney inequality \cite{Loomis-Whitney}
$$
|\Omega|^{\frac{d-1}{d}}\leq \prod_{i=1}^d |P_{\omega_i} \Omega|^{\frac{1}{d}}
$$ 
for $\{\omega_1,\dots,\omega_d\}$ an orthonormal basis of $\R^d$.  We refer to \cite[Section 7]{Milman2022} for similar basis-averaging arguments. When $\Omega$ is convex, there are a number of \emph{reverse Loomis--Whitney inequalities} available, starting with Meyer's inequality \cite{Meyer1988}
$$
|\Omega|^{\frac{d-1}{d}}\geq \frac{((d-1)!)^{\frac{1}{d}}}{d^{\frac{d-1}{d}}} \prod_{i=1}^d |\Omega \cap \langle \omega_i\rangle^\perp|^{\frac{1}{d}}
$$ 
for an orthonormal basis $\{\omega_1,\dots,\omega_d\}$; see for instance \cite{ABBC2021} for recent work in this direction.  We do not know if these inequalities can similarly be viewed as $p \to 0$ limits of some $L^p$ inequality.
\end{remark}

\begin{remark}  Theorem \ref{theoremX} breaks down when $f$ is not assumed to be non-negative.  Suppose for instance that $f$ is chosen so that the Fourier transform $\hat f$ is a bump function adapted to a cylindrical region $B^{d-1}(0,\delta) \times [1,2]$ for some small $\delta > 0$.  Then $f$ is comparable to $\delta^{d-1}$ on a disk $B^{d-1}(0,c/\delta) \times [-c,c]$ for some small $c>0$, hence
$$ \|f\|_{L^p(\R^d)} \gtrsim \delta^{d-1-\frac{d-1}{p}}$$
for any $p>0$.  On the other hand, observe that $Xf(\omega,v)$ vanishes unless the hyperplane $\omega^\perp$ passes through the support
$B^{d-1}(0,\delta) \times [1,2]$ of $\hat f$, which only occurs for $\omega$ in a strip of measure $O(\delta)$, and for such $\omega$, $Xf(\omega,\cdot)$ is concentrated in a box of dimensions $1/\delta \times \dots \times 1/\delta \times 1$ in $\omega^\perp$ and has amplitude $O(\delta^{d-2})$, hence
$$ \|Xf\|_{L^q(\R^d)} \lesssim \delta^{d-2-\frac{d-3}{q}}.$$
Sending $\delta\to 0$, we conclude that the inequality \eqref{lowerX} can only hold uniformly in $\delta$ if
$$ d-2 - \frac{d-3}{q} \leq d-1 - \frac{d-1}{p},$$
which is only compatible with \eqref{Xcond} when $p \geq \frac{d+1}{2}$, which is absurd since $0 < p \leq 1$.

This counterexample shows that Theorem \ref{theoremX} does not follow from the existence of a bounded linear left inverse of $X$ from the non-negative functions of $L^q(\mathcal{M}_{1,d})$ to the non-negative functions of $L^p(\R^d)$, since such an inverse would also extend to signed functions and thus contradict the above counterexample.  
\end{remark}
If we are prepared to accept a slightly smaller positive constant $C$, then Theorem \ref{theoremX} may be extended to quite general \emph{restricted X-ray transforms}. This involves replacing the uniform measure on $\mathbb{S}^{d-1}$ with a more general positive finite measure $\mu$, and this is naturally extend to a measure $\nu$ on $\mathcal{M}_{1,d}$ by
\begin{equation}\label{obviousE}
\int_{\mathcal{M}_{1,d}}Fd\nu \coloneqq \int_{\mathbb{S}^{d-1}} \left(\int_{\langle \omega \rangle^\perp} F(\omega,v)\ dv\right) d\mu(\omega).
\end{equation}
\begin{theorem}[Lower bounds for restricted $X$-ray transforms]\label{theoremXE}
Suppose $d\geq 2$ and that $0<p,q\leq 1$ satisfy \eqref{Xcond}. Then
\begin{equation}\label{lowerXE}
\|Xf\|_{L^q(d\nu)} \geq C(\mu)\|f\|_{L^p(\R^d)}
\end{equation}
for all nonnegative measurable functions $f \colon \R^d \to \R^+$, where
$$
C(\mu)\coloneqq\left(\int_{(\mathbb{S}^{d-1})^d}|\omega_1\wedge\cdots\wedge\omega_d|^{\frac{dq}{d-1}\left(\frac{1}{p}-1\right)} d\mu(\omega_1)\cdots d\mu(\omega_d)\right)^{\frac{1}{dq}}.
$$
\end{theorem}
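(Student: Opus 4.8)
The idea is to run the same argument as in the proof of Theorem~\ref{theoremX}, but with the orthonormal basis $\{\omega_1,\dots,\omega_d\}$ replaced by an arbitrary $d$-tuple of unit vectors, keeping careful track of the resulting Brascamp--Lieb constant. Concretely: (i) establish a ``tilted'' adjoint Loomis--Whitney inequality, pointwise in $(\omega_1,\dots,\omega_d)\in(\mathbb{S}^{d-1})^d$, whose constant is an explicit power of $|\omega_1\wedge\cdots\wedge\omega_d|$; (ii) homogenise and integrate this family of inequalities against $\mu^{\otimes d}$, arranging the exponents so that the product structure collapses to $\|Xf\|_{L^q(d\nu)}$ and the constant collapses to $C(\mu)$.

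\textbf{Step 1: the tilted Loomis--Whitney inequality and its sharp constant.} The first and main task is to show that if $\omega_1,\dots,\omega_d\in\mathbb{S}^{d-1}$ are linearly independent and $P_{\omega_i}$ denotes the orthogonal projection of $\mathbb{R}^d$ onto $\langle\omega_i\rangle^\perp$, then
$$\int_{\mathbb{R}^d}\prod_{i=1}^d f_i(P_{\omega_i}x)^{\frac{1}{d-1}}\,dx \le |\omega_1\wedge\cdots\wedge\omega_d|^{-\frac{1}{d-1}}\prod_{i=1}^d\Bigl(\int_{\langle\omega_i\rangle^\perp}f_i\Bigr)^{\frac{1}{d-1}}$$
for nonnegative $f_i$, with this constant sharp; equivalently, the Brascamp--Lieb datum $\bigl((P_{\omega_1},\dots,P_{\omega_d}),(\tfrac1{d-1},\dots,\tfrac1{d-1})\bigr)$ has $\BL=|\omega_1\wedge\cdots\wedge\omega_d|^{-1/(d-1)}$ (and is infinite when the $\omega_i$ are linearly dependent). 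I would prove this by the linear change of variables $x=Vy$ with $V:=[\omega_1\,|\,\cdots\,|\,\omega_d]$, so $Ve_i=\omega_i$ and $V$ carries the coordinate axes $\langle e_i\rangle=\ker P_{e_i}$ onto the lines $\langle\omega_i\rangle=\ker P_{\omega_i}$. After this substitution the integral becomes $|\det V|$ times a standard Loomis--Whitney integral \eqref{LWhat} in the $y$ variables. The key point to verify is the correct transformation of each marginal: since $\omega_i$ is a \emph{unit} vector, $V$ acts as an isometry along the axis $\langle e_i\rangle$, so the map $\mathbb{R}^{d-1}\to\langle\omega_i\rangle^\perp$, $y'\mapsto P_{\omega_i}(V\tilde y')$ (with $\tilde y'$ the vector $y'$ carrying a $0$ in the $i$-th slot), is a linear isomorphism of determinant $|\det V|$ --- one sees this by extending it to the orthogonal splittings $\mathbb{R}^{d-1}\oplus\langle e_i\rangle\to\langle\omega_i\rangle^\perp\oplus\langle\omega_i\rangle$, which has the same determinant as $V$ and is block-triangular with an isometric $\langle e_i\rangle$-block. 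Hence $\int_{\langle\omega_i\rangle^\perp}f_i$ equals $|\det V|$ times the corresponding marginal of the transformed function. Collecting the Jacobian factors ($|\det V|$ from $dx$, and $|\det V|^{-1}$ inside each of the $d$ marginals, all raised to $\tfrac1{d-1}$) produces the exponent $1-\tfrac{d}{d-1}=-\tfrac1{d-1}$ on $|\det V|=|\omega_1\wedge\cdots\wedge\omega_d|$; sharpness follows from that of \eqref{LWhat}, transporting the (tensor-product) extremisers back by $V^{-1}$.

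\textbf{Step 2: invoke Theorem~\ref{main-cts-thm} and the pointwise adjoint inequality.} Applying Theorem~\ref{main-cts-thm} to the datum of Step~1 with $\theta_i=\tfrac1d$ for all $i$, condition \eqref{pi-def} becomes precisely \eqref{Xcond} with $p_i=q$; moreover $P_{\omega_i}P_{\omega_i}^*=\mathrm{id}$, so $(P_{\omega_i})_*f$ is the genuine marginal $Xf(\omega_i,\cdot)$ and \eqref{FT} applies. This yields, for every linearly independent $d$-tuple,
$$\|f\|_{L^p(\mathbb{R}^d)} \le |\omega_1\wedge\cdots\wedge\omega_d|^{-\frac{1}{d-1}\left(\frac1p-1\right)}\prod_{i=1}^d\|Xf(\omega_i,\cdot)\|_{L^q(\langle\omega_i\rangle^\perp)}^{\frac1d}$$
(and trivially, after clearing the wedge factor, also for linearly dependent tuples, both sides vanishing when $p<1$; the case $p=q=1$ is the identity $\|f\|_1=\prod_i\|Xf(\omega_i,\cdot)\|_1^{1/d}$).

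\textbf{Step 3: homogenise and integrate.} Multiplying this inequality by $|\omega_1\wedge\cdots\wedge\omega_d|^{\frac1{d-1}(\frac1p-1)}$, raising to the power $q$, and then to the power $d$ gives
$$|\omega_1\wedge\cdots\wedge\omega_d|^{\frac{dq}{d-1}\left(\frac1p-1\right)}\,\|f\|_{L^p(\mathbb{R}^d)}^{dq} \le \prod_{i=1}^d\|Xf(\omega_i,\cdot)\|_{L^q(\langle\omega_i\rangle^\perp)}^{q},$$
valid for \emph{all} $(\omega_1,\dots,\omega_d)\in(\mathbb{S}^{d-1})^d$. Integrating both sides over $(\mathbb{S}^{d-1})^d$ against $d\mu(\omega_1)\cdots d\mu(\omega_d)$: on the right Tonelli factors the integral as
$$\Bigl(\int_{\mathbb{S}^{d-1}}\|Xf(\omega,\cdot)\|_{L^q(\langle\omega\rangle^\perp)}^q\,d\mu(\omega)\Bigr)^d = \|Xf\|_{L^q(d\nu)}^{dq}$$
by \eqref{obviousE}, while on the left one gets $C(\mu)^{dq}\|f\|_{L^p(\mathbb{R}^d)}^{dq}$ by the definition of $C(\mu)$. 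Taking $dq$-th roots yields \eqref{lowerXE}. (When $C(\mu)=0$ the inequality is vacuous; and since $|\omega_1\wedge\cdots\wedge\omega_d|\le1$ and $\tfrac1p-1\ge0$, one always has $C(\mu)<\infty$ for finite $\mu$.)

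\textbf{Expected obstacle.} The only substantive point is Step~1 --- identifying the sharp constant $|\omega_1\wedge\cdots\wedge\omega_d|^{-1/(d-1)}$, i.e. carrying out the change of variables while correctly accounting for the surface-measure normalisation of each marginal, which is not preserved by a non-orthogonal linear map. Everything afterward (the invocation of Theorem~\ref{main-cts-thm} and the homogenisation/integration) is bookkeeping, and the exponents are arranged so that raising to the $d$-th power before integrating makes the product collapse without any use of the arithmetic--geometric mean inequality.
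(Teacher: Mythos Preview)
Your proposal is correct and follows essentially the same route as the paper: establish the affine-invariant Loomis--Whitney inequality with constant $|\omega_1\wedge\cdots\wedge\omega_d|^{-1/(d-1)}$, take its adjoint with $\theta_i=1/d$ using \eqref{Xcond}, raise to the power $dq$, and integrate against $\mu^{\otimes d}$. The only difference is that you supply a self-contained change-of-variables proof of the tilted Loomis--Whitney constant (your Step~1), whereas the paper simply quotes the inequality \eqref{LWinv} from the literature; your block-triangular determinant argument there is correct.
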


\begin{proof}
If $\omega_1,\hdots,\omega_d\in\mathbb{S}^{d-1}$ are linearly independent, then by the Loomis--Whitney inequality (Theorem \ref{lw-alw}(i)) combined with suitable linear changes of variables (see for example \cite[Appendix A]{BB2010}) we have 
\begin{equation}\label{LWinv}
\int_{\R ^{d-1}}\prod_{j=1}^df_i(P_{\omega_i}x)^{\frac{1}{d-1}}dx\leq |\omega_1\wedge\cdots\wedge\omega_d|^{-\frac{1}{d-1}}\prod_{i=1}^d\left(\int_{\langle\omega_i\rangle^\perp}f_i\right)^{\frac{1}{d-1}}
\end{equation}
for any measurable $f_i \colon \langle \omega_i \rangle^\perp \to \R^+$. We refer to \cite[Section 9.2]{CHV2023} for an alternative proof of this affine-invariant form of the Loomis--Whitney inequality. For this inequality (as in Theorem \ref{lw-alw}(ii)) we have the adjoint inequality
\begin{equation}\label{adjomega}
\|f\|_{L^p(\R ^d)}\leq  |\omega_1\wedge\cdots\wedge\omega_d|^{-\frac{1}{d-1}\left(\frac{1}{p}-1\right)}\prod_{i=1}^d\|(P_{\omega_i})_*f\|_{L^q(\langle\omega_i\rangle^\perp)}^{\frac{1}{d}},
\end{equation}
thanks to \eqref{Xcond}. Applying \eqref{FT} it follows that
\begin{equation}\label{adjomega1}
 |\omega_1\wedge\cdots\wedge\omega_d|^{\frac{dq}{d-1}\left(\frac{1}{p}-1\right)}\|f\|_{L^p(\R ^d)}^{dq}\leq \prod_{i=1}^d\|Xf(\omega_i,\cdot)\|_{L^q(\langle\omega_i\rangle^\perp)}^q.
\end{equation}
The inequality \eqref{lowerXE} now follows on integrating in $\omega_i$ with respect to the measure $\mu$ for each $i$, and then applying \eqref{obviousE}.
\end{proof}
\begin{remark}
Evidently Theorem \ref{theoremXE} only has content for measures $\mu$ for which $C(\mu)$ is nonzero, a condition that is manifestly independent of the choice of $p\not=1$. This happens if and only if the support of $\mu$ is not contained in any great sphere. This geometric condition is easily seen to be best-possible for $p\not=1$. For example, if $\mu$ is supported in the great sphere perpendicular to $e_d$, and $f$ is the indicator function of the cylinder $\{x=(x',x_d)\in\mathbb{R}^d: x'\in\mathbb{R}^{d-1}, |x'|\leq 1, |x_d|\leq\delta\}$ then $Xf(\omega,v)\leq 1$ for $\omega\in\supp(\mu)$ and $v$ belonging to a set of measure approximately $\delta$, and vanishes otherwise. Hence $\|Xf\|_{L^q(d\nu)}\lesssim \delta^{\frac{1}{q}}$ and $\|f\|_p\sim\delta^{\frac{1}{p}}$. Thus if $\|f\|_{L^p(\mathbb{R}^d)}\lesssim \|Xf\|_{L^q(d\nu)}$ then necessarily $p=q$, which forces $p=1$ by \eqref{Xcond}. We remark that the condition \eqref{Xcond} continues to be best-possible in Theorem \ref{theoremXE} as the standard scaling argument is not sensitive to the choice of measure $\mu$.
\end{remark}
Theorem \ref{theoremX} may be generalized to the \emph{$k$-plane transform}
$$
T_{k,d}f(\pi,y) \coloneqq \int_\pi f(x+y)d\lambda_\pi(x),
$$
where the Grassmannian manifold $\mathcal{M}_{k,d}$ of affine $k$-planes is parametrized by a $k$-dimensional subspace $\pi$ and an element $y\in\pi^\perp$. 
\begin{theorem}[Lower bound for the $k$-plane transform]
\label{theoremTk}
Suppose that $0<p, q\leq 1$, $d \geq 2$ and $1\leq k\leq d-1$. Then there exists a positive constant $C = C_{p,q,d,k}$ such that
\begin{equation}\label{lowerTk}
\|T_{k,d}f\|_{L^q(\mathcal{M}_{k,d})}\geq C\|f\|_p
\end{equation}
for all nonnegative measurable functions $f \colon \R^d \to \R^+$
if and only if
\begin{equation}\label{Tkcond}
\frac{1}{d}\left(1-\frac{1}{q}\right)=\frac{1}{d-k}\left(1-\frac{1}{p}\right).
\end{equation}
Moreover, in all such cases we may take $C=1$.
\end{theorem}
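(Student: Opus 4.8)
The necessity of \eqref{Tkcond} follows from the scaling $f\mapsto f(\lambda\cdot)$ exactly as in the X-ray case of Theorem \ref{theoremX} (and the classical analysis of upper bounds in \cite{Christ1984}), so the plan is to establish the sufficiency together with the sharp value $C=1$. The strategy is to run the argument of Theorem \ref{theoremX}, replacing the Loomis--Whitney inequality by its Finner generalization (Remark \ref{finner-rem}) for the coordinate $(d-k)$-plane projections, and then to average over rotations.

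Fix the standard orthonormal basis $e_1,\dots,e_d$ of $\R^d$. For each $k$-element subset $S\subseteq\{1,\dots,d\}$ put $\pi_S\coloneqq\langle e_j:j\in S\rangle$, so $\pi_S^\perp=\langle e_j:j\notin S\rangle$ has dimension $d-k$, and let $P_S\colon\R^d\to\pi_S^\perp$ denote the orthogonal projection. Since $P_SP_S^*$ is the identity, \eqref{pushform} identifies $(P_S)_*f$ with the marginal of $f$ over the fibres $v+\pi_S$, that is, with $T_{k,d}f(\pi_S,\cdot)$. Each coordinate $j$ lies in exactly $\binom{d-1}{k}$ of the subspaces $\pi_S^\perp$, so the maps $\{P_S:|S|=k\}$ with common exponent $1/\binom{d-1}{k}$ form a Finner datum with Brascamp--Lieb constant $1$. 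Applying the adjoint Finner inequality \eqref{ano-finner} (a special case of Theorem \ref{main-cts-thm}, using that the forward constant is $1$) with all weights equal to $\theta_S=1/\binom{d}{k}$ gives
\[
\|f\|_{L^p(\R^d)}\le\prod_{|S|=k}\bigl\|T_{k,d}f(\pi_S,\cdot)\bigr\|_{L^{p_S}(\pi_S^\perp)}^{1/\binom{d}{k}},
\]
where $\tfrac{1}{\binom{d-1}{k}}\bigl(1-\tfrac1p\bigr)=\tfrac{1}{\binom{d}{k}}\bigl(1-\tfrac1{p_S}\bigr)$. Since $\binom{d}{k}=\tfrac{d}{d-k}\binom{d-1}{k}$, this relation is precisely \eqref{Tkcond}, so $p_S=q$ for every $S$.

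Raising to the power $q$ and applying the arithmetic mean--geometric mean inequality (as in the passage from \eqref{adjrot1} to \eqref{adjrot2}) yields
\[
\|f\|_{L^p(\R^d)}^{q}\le\frac{1}{\binom{d}{k}}\sum_{|S|=k}\bigl\|T_{k,d}f(\pi_S,\cdot)\bigr\|_{L^q(\pi_S^\perp)}^{q}.
\]
The argument used only that the $\pi_S$ are spanned by subsets of an orthonormal basis and that Lebesgue measure is rotation invariant, so the same inequality holds with each $\pi_S$ replaced by $O\pi_S$ for any $O\in SO(d)$. Integrating over $O\in SO(d)$ against its invariant probability measure, and using that $SO(d)$ acts transitively on the Grassmannian of $k$-planes (with a unique invariant probability measure $d\pi$), each summand becomes $\int\|T_{k,d}f(\pi,\cdot)\|_{L^q(\pi^\perp)}^q\,d\pi$, which with the normalization of $\mathcal M_{k,d}$ analogous to \eqref{obvious} equals $\|T_{k,d}f\|_{L^q(\mathcal M_{k,d})}^q$; the $\binom{d}{k}$ identical terms cancel the prefactor, giving \eqref{lowerTk} with $C=1$.

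The only point that is more than a transcription of the $k=1$ case is the claim that the coordinate Finner datum has Brascamp--Lieb constant exactly $1$ (this is what yields the sharp $C=1$), which is Finner's inequality \cite{finner}, provable by induction on $d$ together with H\"older's inequality. The remaining work is bookkeeping: matching the normalizations of Haar measure on $SO(d)$, the invariant measure on the Grassmannian, and the measure on $\mathcal M_{k,d}$ so that the rotation average reproduces $\|T_{k,d}f\|_{L^q(\mathcal M_{k,d})}$ on the nose. Neither step is a genuine obstacle.
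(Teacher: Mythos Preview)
Your proof is correct and follows essentially the same approach as the paper: both use the Finner datum formed by the coordinate $(d-k)$-plane projections (with common exponent $1/\binom{d-1}{k}$ and Brascamp--Lieb constant $1$), take the adjoint with equal weights $\theta_S=1/\binom{d}{k}$, verify that the resulting exponent condition is exactly \eqref{Tkcond}, apply AM--GM, and average over rotations. Your combinatorial bookkeeping and normalization remarks match the paper's argument in all essential respects.
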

\begin{proof}
That \eqref{lowerTk} implies \eqref{Tkcond} follows by scaling.

For a $k$-dimensional subspace $\pi$ let $P_{\pi}$ be the orthogonal projection of $\R ^d$ onto $\pi^\perp$. Next we let $(E_i)_{i=1}^n$ be an enumeration of the $k$-dimensional coordinate subspaces of $\R^d$, so that $E_i=\langle e_j\rangle_{j\in S_i}$ where $S_i$ is a subset of $\{1,\hdots,d\}$ of cardinality $k$. Evidently $n=\binom{d}{d-k}$ and $$\#\{i:j\in S_i\}=\binom{d-1}{d-k-1}$$ for each $1\leq j\leq d$. Hence if $c_i=\binom{d-1}{d-k-1}^{-1}$ for each $i$,
$$((P_{E_i})_{i=1}^n, (c_i)_{i=1}^n)$$ is a Brascamp--Lieb datum of Finner type (see Remark \ref{finner-rem}), and so 
\begin{equation}\label{finn}
\BL((P_{E_i})_{i=1}^n, (c_i)_{i=1}^n)=1.
\end{equation}
By rotation invariance (or by selecting a different orthonormal basis of $\mathbb{R}^d$) it follows that
$$
\BL((P_{\rho E_i})_{i=1}^n, (c_i)_{i=1}^n)=1
$$
for all rotations $\rho$ of $\mathbb{R}^d$.
Taking suitable adjoints of these Brascamp--Lieb inequalities and arguing as in the proof of Theorem \ref{theoremX} we have
\begin{eqnarray}\label{adjrho}
\begin{aligned}
\|f\|_{L^p(\R ^d)}^q&\leq\prod_{i=1}^n\|(P_{\rho E_i})_*f\|_{L^q((\rho E_i)^\perp)}^{\frac{q}{n}}\\
&=\prod_{i=1}^n\|T_{k,d}f(\rho E_i,\cdot)\|_{L^q((\rho E_i)^\perp)}^{\frac{q}{n}}\\
&\leq\frac{1}{n}\sum_{i=1}^n\|T_{k,d}f(\rho E_i,\cdot)\|_{L^q((\rho E_i)^\perp)}^{q}.
\end{aligned}
\end{eqnarray}
Here we have used the fact that $$\binom{d-1}{d-k-1}^{-1}\left(1-\frac{1}{p}\right)=\binom{d}{d-k}^{-1}\left(1-\frac{1}{q}\right)$$ by \eqref{Tkcond}. The inequality \eqref{lowerTk} follows from \eqref{adjrho} by averaging in $\rho$.
\end{proof}
\begin{remark}
Theorem \ref{theoremTk} is somewhat similar in spirit to certain slicing problems in convex geometry, such as Bourgain's slicing problem, which may also be interpreted as lower bounds on Radon-like transforms. We refer to \cite{KRZ2023} for further contextual discussion and some recent results. 
\end{remark}
\begin{remark}
An extension of Theorem \ref{theoremXE} to the $k$-plane transform is less apparent as the Brascamp--Lieb constant 
\begin{equation}\label{mysteryconstant}
\BL((P_{\pi_i})_{i=1}^n, (c_i)_{i=1}^n)
\end{equation}  
is rather more mysterious in the intermediate range $1<k<d-1$; see
the proof of Theorem \ref{theoremTk}. However, it may be possible to sidestep this by adapting the proof of the forthcoming Theorem \ref{theoremTkmon}, which uses only X-ray transform estimates. For some recent discussion of restricted $k$-plane transforms see \cite{DMV2023}.
\end{remark}
That we may take $C=1$ in Theorem \ref{theoremTk} hints at the following stronger result.
\begin{theorem}[Monotonicity of $k$-plane transform norms]\label{theoremTkmon}
Suppose $d\geq 2$, $0<p\leq 1$ and that the exponent $p_k$ is given by
\begin{equation}\label{Tkkcond}
\frac{1}{d}\left(1-\frac{1}{p_k}\right)=\frac{1}{d-k}\left(1-\frac{1}{p}\right) 
\end{equation}
for each $1\leq k\leq d-1$.
Then $\|T_{k,d}f\|_{L^{p_k}(\mathcal{M}_{k,d})}$ is nondecreasing in $k$.
\end{theorem}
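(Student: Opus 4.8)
The plan is to reduce to the single--step comparison $\|T_{k-1,d}f\|_{L^{p_{k-1}}(\mathcal{M}_{k-1,d})}\le\|T_{k,d}f\|_{L^{p_k}(\mathcal{M}_{k,d})}$ for each $2\le k\le d-1$ (the case $k=1$ is already Theorem \ref{theoremX}, since $T_{0,d}f=f$ and $p_0=p$); chaining these single steps gives the stated monotonicity. All the geometry will be carried by a fibration of Grassmannians together with one appeal to the X--ray estimate of Theorem \ref{theoremX} in the reduced dimension $d-k+1$, so that no $k$--plane Brascamp--Lieb constant (such as the mysterious one in \eqref{mysteryconstant}) is needed.

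The key point is an identity exhibiting the $k$--plane transform as an X--ray transform of the $(k-1)$--plane transform. Fix a $(k-1)$--dimensional subspace $V\le\R^d$ and put $g_V:=T_{k-1,d}f(V,\cdot)$, a nonnegative function on $V^\perp\cong\R^{d-k+1}$. A Fubini computation gives
\[
 T_{k,d}f(V\oplus\langle\omega\rangle,y)=Xg_V(\omega,y)\qquad(\omega\in\mathbb{S}(V^\perp),\ y\in(V\oplus\langle\omega\rangle)^\perp),
\]
where $X$ denotes the X--ray transform inside $V^\perp$. One then checks that \eqref{Tkkcond} at the two consecutive levels $k-1$ and $k$ is exactly the condition \eqref{Xcond} for the X--ray transform in dimension $d-k+1$, with input exponent $p_{k-1}$ and output exponent $p_k$; so Theorem \ref{theoremX} (with its constant $1$, and with the mixed--norm flexibility noted in the remark following it) yields, for every such $V$ and every $s>0$,
\[
 \Bigl(\int_{\mathbb{S}(V^\perp)}\|T_{k,d}f(V\oplus\langle\omega\rangle,\cdot)\|_{L^{p_k}}^{\,s}\,d\sigma(\omega)\Bigr)^{1/s}\ \ge\ \|g_V\|_{L^{p_{k-1}}(V^\perp)}=\|T_{k-1,d}f(V,\cdot)\|_{L^{p_{k-1}}(V^\perp)}.
\]
Averaging over $V$ will then use that a uniformly random $(k-1)$--plane through the origin together with a uniformly random line in its orthogonal complement spans a uniformly random $k$--plane — this is equivariance plus uniqueness of the invariant probability measure on the Grassmannian — so that integrating in $V$ against Haar measure transfers the right--hand side into $\|T_{k-1,d}f\|_{L^{p_{k-1}}(\mathcal{M}_{k-1,d})}$ and the left--hand side into a norm of $T_{k,d}f$ over $\mathcal{M}_{k,d}$.

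The step I expect to be the main obstacle is precisely this reassembly. Taking $s=p_{k-1}$ and integrating produces $\|T_{k-1,d}f\|_{L^{p_{k-1}}(\mathcal{M}_{k-1,d})}\le\|T_{k,d}f\|_{L^{p_{k-1}}_{\pi}L^{p_k}_{y}(\mathcal{M}_{k,d})}$, a mixed--norm quantity which, because $p_k<p_{k-1}$, strictly dominates the target $\|T_{k,d}f\|_{L^{p_k}(\mathcal{M}_{k,d})}$; a bare application of Jensen's inequality to remove the mixed norm runs the wrong way, and the same difficulty recurs whichever fibration one chooses (over $(k-1)$--planes; over lines with an induction on $d$ using $T_{j,d-1}(Xf(\omega,\cdot))=T_{j+1,d}f(\langle\omega\rangle\oplus\cdot,\cdot)$; or via the ``relative'' form of Theorem \ref{theoremTk}). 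Closing the gap must use that $T_{k,d}f$ lies in the range of the $k$--plane transform: I would run the reduction through all intermediate levels $j<k$ simultaneously, choosing the auxiliary exponent $s$ at each stage so that Jensen's inequality is only ever used in its favourable (convex) direction, and, failing that, combine the X--ray bound with the reverse adjoint inequality of Theorem \ref{Theorem:reverse} in order to trade the exponent $p_{k-1}$ down to $p_k$ on the $k$--plane side. The bulk of the work lies here; everything else is bookkeeping with the exponents and the invariant measures.
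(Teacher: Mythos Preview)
Your strategy coincides with the paper's: reduce to the single step
\[
\|T_{k-1,d}f\|_{L^{p_{k-1}}(\mathcal{M}_{k-1,d})}\le\|T_{k,d}f\|_{L^{p_k}(\mathcal{M}_{k,d})},
\]
use the identity $T_{k,d}f(\pi\oplus\langle\omega\rangle,v)=X\bigl(T_{k-1,d}f(\pi,\cdot)\bigr)(\omega,v)$ on $\pi^\perp\cong\R^{d-k+1}$, apply Theorem~\ref{theoremX} in that fibre, and reassemble via the invariant measure on the Grassmannian. The only difference is the choice of the auxiliary power. The paper takes $s=p_k$ rather than your $s=p_{k-1}$: raising the fibrewise X-ray bound to the $p_k$\textsuperscript{th} power and integrating over $\pi\in G_{k-1,d}$ produces
\[
\|T_{k-1,d}f\|_{L^{p_k}_\pi L^{p_{k-1}}_y}\ \le\ \|T_{k,d}f\|_{L^{p_k}(\mathcal{M}_{k,d})},
\]
so the mixed norm sits on the \emph{left} side, not the right. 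The paper then concludes by H\"older on the probability space $G_{k-1,d}$, invoking the inequality $p_{k-1}\le p_k$.

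Your diagnosis of the obstacle is correct, and it applies equally to the paper's argument. From \eqref{Tkkcond} one has $1-\tfrac{1}{p_k}=\tfrac{d}{d-k}(1-\tfrac{1}{p})$, which for $0<p<1$ is strictly decreasing in $k$; hence $p_k<p_{k-1}$, not $p_{k-1}\le p_k$. On a probability space the H\"older (Jensen) comparison therefore gives
\[
\|T_{k-1,d}f\|_{L^{p_k}_\pi L^{p_{k-1}}_y}\ \le\ \|T_{k-1,d}f\|_{L^{p_{k-1}}_\pi L^{p_{k-1}}_y}=\|T_{k-1,d}f\|_{L^{p_{k-1}}(\mathcal{M}_{k-1,d})},
\]
which is the wrong direction for what is needed. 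So the final sentence of the paper's proof does not close the argument either; the gap you flagged as ``the main obstacle'' is genuine, and neither your choice $s=p_{k-1}$ nor the paper's choice $s=p_k$ avoids it. Your proposed remedies (threading through intermediate levels, or invoking Theorem~\ref{Theorem:reverse}) are not obviously sufficient, and the paper offers no alternative.
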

\begin{proof}
We are required to show that 
\begin{equation}\label{quickmon}
\|T_{k-1,d}f\|_{L^{p_{k-1}}(\mathcal{M}_{k-1,d})}\leq \|T_{k,d}f\|_{L^{p_k}(\mathcal{M}_{k,d})}
\end{equation} 
for all $1\leq k\leq d-1$. Interpreting $T_{0,d}$ as the identity, the case $k=1$ (and thus $d=2$) follows from Theorem \ref{theoremX}. We may therefore suppose that $d\geq 3$ and $2\leq k\leq d-1$.
Let $\pi$ be a $(k-1)$-dimensional subspace of $\mathbb{R}^d$. Applying the $(d-k+1)$-dimensional X-ray transform to
$
T_{k-1,d}f(\pi,\cdot)
$
on the euclidean space $\pi^\perp$
we have
\begin{equation}\label{k-1 to k}
X\left(T_{k-1,d}f(\pi,\cdot)\right)(\omega,v)=T_{k,d}f(\pi\oplus\langle\omega\rangle,v)
\end{equation}
where $\omega$ is a unit vector in $\pi^\perp$ and $v\in(\pi\oplus\langle\omega\rangle)^\perp$, the orthogonal complement of $\langle\omega\rangle$ in $\pi^\perp$. Applying Theorem \ref{theoremX} and using \eqref{Tkkcond} we have
$$
\|T_{k-1,d}f(\pi,\cdot)\|_{L^{p_{k-1}}(\pi^\perp)}^{p_k}\leq \int\int_{(\pi\oplus\langle\omega\rangle)^\perp}T_{k,d}f(\pi\oplus\langle\omega\rangle,v)^{p_k}dvd\sigma_{\pi^\perp}(\omega),
$$
where $d\sigma_{\pi^\perp}$ denotes normalized surface measure on the unit sphere of $\pi^\perp$. Integrating in $\pi$ and taking roots we obtain
$$
\|T_{k-1,d}f\|_{L^{p_k}_\pi L^{p_{k-1}}_y}\leq  \|T_{k,d}f\|_{L^{p_k}(\mathcal{M}_{k,d})}.
$$
The claimed inequality \eqref{quickmon} now follows by H\"older's inequality since $p_{k-1}\leq p_k$.
\end{proof}
As a corollary of Theorem \ref{theoremTkmon} we have the following sharp entropy inequalities.
\begin{corollary}[Monotonicity of $k$-plane transform entropies]\label{corTkmon}
 The sequence of normalized entropies
\begin{equation}\label{normalentropy}
\frac{1}{d-k}\entropy(T_{k,d}f)
\end{equation} 
is nondecreasing, and in particular,
\begin{equation}\label{entropyTk}
\entropy(T_{k,d}f)\geq \left(\frac{d-k}{d}\right)\entropy(f)
\end{equation}
for all $1\leq k\leq d-1$. 
\end{corollary}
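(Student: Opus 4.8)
The plan is to derive Corollary \ref{corTkmon} from the $L^{p_k}$ monotonicity of Theorem \ref{theoremTkmon} by differentiating at the endpoint $p=1$, in the same spirit as the passage from \eqref{pto0} to the entropic Brascamp--Lieb inequality \eqref{hf} carried out in Section \ref{Section:entropy} (see also Remark \ref{Remark:Renyi}). Throughout we normalize $f$ to be a probability density in the $L\log L$ class, which is harmless since the quantities in \eqref{normalentropy} and \eqref{entropyTk} only involve the entropy $\entropy$, defined on probability densities; the normalization of the measure on $\mathcal{M}_{k,d}$ then ensures that $T_{k,d}f$ is again a probability density for every $k$ (with the convention $T_{0,d}f := f$). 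The one computational input is the elementary identity, obtained by differentiating \eqref{Tkkcond},
$$\frac{dp_k}{dp}\Big|_{p=1}=\frac{d}{d-k},$$
equivalently $\frac{1-p_k}{p_k}=\frac{d}{d-k}\cdot\frac{1-p}{p}$; this is precisely where the normalizing weight $\frac{1}{d-k}$ in \eqref{normalentropy} comes from.

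It suffices to prove the consecutive inequality $\frac{1}{d-k+1}\entropy(T_{k-1,d}f)\le\frac{1}{d-k}\entropy(T_{k,d}f)$ for $1\le k\le d-1$; chaining these from $k=0$ upwards then also yields \eqref{entropyTk}. By Theorem \ref{theoremTkmon} the functions $F(p):=\log\|T_{k-1,d}f\|_{L^{p_{k-1}}(\mathcal{M}_{k-1,d})}$ and $G(p):=\log\|T_{k,d}f\|_{L^{p_k}(\mathcal{M}_{k,d})}$ satisfy $F(p)\le G(p)$ for $0<p<1$, while $F(1)=G(1)=\log 1=0$ since $\|T_{j,d}f\|_{L^1}=1$ for all $j$. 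Hence the left derivatives at $p=1$ obey $F'(1^-)\ge G'(1^-)$. Using \eqref{normdiff} (which at $p=1$ gives $\frac{d}{dp}\log\|g\|_p\big|_{p=1}=-\entropy(g)$ for a probability density $g$) together with the chain rule and the identity for $\frac{dp_k}{dp}\big|_{p=1}$ above, one computes $G'(1^-)=-\frac{d}{d-k}\entropy(T_{k,d}f)$ and likewise $F'(1^-)=-\frac{d}{d-k+1}\entropy(T_{k-1,d}f)$, so that $F'(1^-)\ge G'(1^-)$ is exactly the claimed consecutive inequality; taking the chain from index $0$ to index $k$ gives $\frac{1}{d}\entropy(f)\le\frac{1}{d-k}\entropy(T_{k,d}f)$, which is \eqref{entropyTk}.

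To make the differentiation step legitimate I would first assume $f$ bounded with compact support, so that each $T_{k,d}f$ is bounded and compactly supported in the fibre variable (the affine plane $y+\pi$ misses $\operatorname{supp}f$ once $|y|$ exceeds the radius of the support, and the integrand is bounded by $\|f\|_\infty$ times the volume of a fixed $k$-ball). For such densities $g$ the Taylor expansion $\|g\|_{1-\eps}=1+\eps\,\entropy(g)+O(\eps^2)$ of \eqref{taylor} is valid and furnishes both the required one-sided differentiability of $F$ and $G$ at $p=1$ and the stated values of the derivatives. The general case is then recovered by the standard truncation argument used elsewhere in the paper: replace $f$ by $\min(f,N)/\int\min(f,N)$, let $N\to\infty$ by monotone/dominated convergence, and finally remove the compact support hypothesis by a further limiting argument. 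The only genuine obstacle is this last approximation — ensuring that the entropy identities at $p=1$ survive under the truncations — but for regular $f$ it is a routine computation, and the extension to general $L\log L$ densities is of precisely the type already carried out in Section \ref{Section:entropy}.
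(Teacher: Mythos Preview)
Your proposal is correct and follows essentially the same approach as the paper, which merely states that Corollary \ref{corTkmon} ``follows as a certain $p\rightarrow 1$ limit of \eqref{quickmon}'' and refers the reader to the argument in Section \ref{Section:entropy} for details. You have carried out precisely that limiting argument, including the chain-rule computation of $\frac{dp_k}{dp}\big|_{p=1}=\frac{d}{d-k}$ and the standard truncation to justify the Taylor expansion \eqref{taylor}.
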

Corollary \ref{corTkmon} follows as a certain $p\rightarrow 1$ limit of \eqref{quickmon}. A very similar argument may be found in Section \ref{Section:entropy}, and so we leave the details to the interested reader.
\begin{remark}
The inequality \eqref{entropyTk} is easily seen to hold with equality when $f(x)=e^{-\pi |x|^2}$. Indeed for these extremising inputs the sequence of normalized entropies \eqref{normalentropy} is identically $1$.
Corollary \ref{corTkmon} may also be proved directly from entropy Brascamp--Lieb inequalities by suitably adapting our arguments.
\end{remark}
The arguments of this section are also effective when $p>1$ via the \emph{reverse} adjoint Brascamp--Lieb inequality of Theorem \ref{Theorem:reverse}. For the X-ray transform we have the following.
\begin{theorem}\label{theoremXX}
Suppose $d\geq 2$, $1< p<\infty$ and $0<q< 1$. If $1< r<\infty$ satisfies
\begin{equation}\label{XXcond}
\left(\frac{1}{q}-\frac{1}{p}\right)\left(1-\frac{1}{r}\right)=\frac{1}{d-1}\left(1-\frac{1}{p}\right)\left(\frac{1}{q}-1\right),
\end{equation}
then there exists a constant $C>0$ such that
\begin{equation}\label{XX}
C\|Xf\|_{L^\infty_\omega L^r_{v}}^{\frac{1}{q}-\frac{1}{p}}
\leq \|f\|_p^{\frac{1}{q}-1}\|Xf\|_{L_{\omega,v}^{q}}^{1-\frac{1}{p}}
\end{equation}
for all nonnegative functions $f:\mathbb{R}^d\rightarrow\mathbb{R}^+$.
Morever, if the condition \eqref{XXcond} is not satisfied then \eqref{XX} fails for all positive $C$.
\end{theorem}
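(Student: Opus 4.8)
The plan is to prove necessity of \eqref{XXcond} by a scaling argument and sufficiency by feeding affine Loomis--Whitney data into the reverse adjoint Brascamp--Lieb inequality of Theorem \ref{Theorem:reverse} and then averaging over directions, in the spirit of Theorem \ref{theoremXE} but with the sign profile of the $\theta_i$ reversed so that exactly one of them is positive.

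For the necessity of \eqref{XXcond}, I would test \eqref{XX} against a fixed non-negative bump function $f$ (smooth and compactly supported, so that $\|f\|_p$, $\|Xf\|_{L^\infty_\omega L^r_v}$ and $\|Xf\|_{L^q(\mathcal{M}_{1,d})}$ are all positive and finite) and then replace $f$ by $f(\lambda\,\cdot)$, comparing powers of $\lambda$. Here $\|f(\lambda\cdot)\|_p=\lambda^{-d/p}\|f\|_p$, while a change of variables in \eqref{Xray-def} gives $X(f(\lambda\cdot))(\omega,v)=\lambda^{-1}Xf(\omega,\lambda v)$, so that $\|X(f(\lambda\cdot))\|_{L^\infty_\omega L^r_v}=\lambda^{-1-(d-1)/r}\|Xf\|_{L^\infty_\omega L^r_v}$ and $\|X(f(\lambda\cdot))\|_{L^q(\mathcal{M}_{1,d})}=\lambda^{-1-(d-1)/q}\|Xf\|_{L^q(\mathcal{M}_{1,d})}$. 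Demanding that the powers of $\lambda$ on the two sides of \eqref{XX} agree reduces, after routine algebra, exactly to \eqref{XXcond}; if \eqref{XXcond} fails, letting $\lambda\to0$ or $\lambda\to\infty$ breaks \eqref{XX} for every positive $C$.

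For sufficiency, I would first verify that the exponent $r$ determined by \eqref{XXcond} is precisely the exponent $p_1$ produced by \eqref{pi-def} for the Loomis--Whitney data $c_i=\tfrac{1}{d-1}$ when $\theta_2=\cdots=\theta_d$ and $\theta_1=1-(d-1)\theta_2$ are chosen so that $p_2=\cdots=p_d=q$: since $0<q<1<p$ this forces $\theta_2<0$ and $\theta_1>1>0$, and one checks $p_1\in(1,\infty)$, so Theorem \ref{Theorem:reverse} applies. Now fix $\omega_1\in\mathbb{S}^{d-1}$. For almost every choice of unit vectors $\omega_2,\dots,\omega_d$ for which $\omega_1,\dots,\omega_d$ are linearly independent, apply Theorem \ref{Theorem:reverse} to the affine Loomis--Whitney inequality \eqref{LWinv} (whose Brascamp--Lieb constant is $|\omega_1\wedge\cdots\wedge\omega_d|^{-1/(d-1)}$) and rewrite the marginals as X-ray slices via \eqref{FT}; after raising to the power $\lambda:=q/|\theta_2|$ this produces the per-frame bound
\[
|\omega_1\wedge\cdots\wedge\omega_d|^{\,1-q}\,\|Xf(\omega_1,\cdot)\|_{L^r(\langle\omega_1\rangle^\perp)}^{\lambda\theta_1}\;\le\;\|f\|_p^{\lambda}\prod_{i=2}^{d}\|Xf(\omega_i,\cdot)\|_{L^q(\langle\omega_i\rangle^\perp)}^{q},
\]
where the wedge exponent $1-q$ is positive. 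Integrating in $\omega_2,\dots,\omega_d$ over $\mathbb{S}^{d-1}$ against normalized surface measure, the wedge integral on the left is a positive finite constant $\kappa_d$ (at most $1$, and independent of $\omega_1$ by rotation invariance), while the right-hand product factors and each factor becomes $\int_{\mathbb{S}^{d-1}}\|Xf(\omega,\cdot)\|_{L^q(\langle\omega\rangle^\perp)}^{q}\,d\sigma(\omega)=\|Xf\|_{L^q(\mathcal{M}_{1,d})}^{q}$ by \eqref{obvious}. Taking the supremum over $\omega_1$ and raising to the power $(1-\tfrac1p)/(q(d-1))$ then delivers \eqref{XX} with $C=\kappa_d^{(1-1/p)/(q(d-1))}>0$; one may assume from the start that $\|f\|_p$ and $\|Xf\|_{L^q(\mathcal{M}_{1,d})}$ are positive and finite, as \eqref{XX} is otherwise immediate.

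The step that requires genuine care is the use of affine rather than orthonormal Loomis--Whitney data. With orthonormal frames, fixing $\omega_1$ would confine $\omega_2,\dots,\omega_d$ to the great subsphere $\mathbb{S}(\langle\omega_1\rangle^\perp)$, and averaging over them would reconstruct only a degenerate restricted X-ray norm, not $\|Xf\|_{L^q(\mathcal{M}_{1,d})}$; the affine form \eqref{LWinv} removes this constraint at the cost of the wedge factor, which is harmless precisely because $p>1$ places that factor on the favourable side of the inequality with a positive exponent, so its direction-average $\kappa_d$ is finite and nonzero. The remaining tasks --- checking $r=p_1$ and $p_1\in(1,\infty)$, and tracking exponents through the two rescalings --- are routine once one keeps in mind that $1-\tfrac1p>0$ and $\tfrac1q-1>0$. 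One expects the same scheme, applied instead to the Finner-type data appearing in the proof of Theorem \ref{theoremTk}, to yield analogous reverse inequalities for the $k$-plane transform.
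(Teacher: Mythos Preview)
Your proposal is correct and is essentially the paper's own argument: both apply Theorem \ref{Theorem:reverse} to the affine Loomis--Whitney data \eqref{LWinv} with exactly one positive $\theta_i$ (yours is $\theta_1$, the paper's is $\theta_d$), identify the associated exponent with $r$ via \eqref{XXcond} and the remaining $p_i$ with $q$, raise to the power that makes the wedge exponent equal to $1-q$, integrate over the $d-1$ ``negative'' directions to assemble $\|Xf\|_{L^q_{\omega,v}}$, and take a supremum in the remaining direction; the resulting constant is exactly the paper's \eqref{theconstant}, and your $\kappa_d^{(1-1/p)/(q(d-1))}$ matches its exponent.
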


\begin{proof}
That the condition \eqref{XXcond} is necessary follows by the usual scaling argument.

Applying Theorem \ref{Theorem:reverse} to the affine-invariant Loomis--Whitney inequality \eqref{LWinv} we have,
\begin{equation}\label{revready}
 |\omega_1\wedge\cdots\wedge\omega_d|^{\frac{1}{d-1}(1-\frac{1}{p})}\|(P_{\omega_d})_*f\|_{p_d}^{\theta_d}\leq\|f\|_p\prod_{i=1}^{d-1}\|(P_{\omega_i})_*f\|_{p_i}^{-\theta_i},
\end{equation}
where $-\theta_1,\hdots,-\theta_{d-1},\theta_d>0$, $0<p_1,\hdots, p_{d-1}\leq 1$ and $p,p_d\geq 1$ satisfy
$\theta_1+\cdots +\theta_d=1$
and 
$\theta_i\left(1-\frac{1}{p_i}\right)=\frac{1}{d-1}\left(1-\frac{1}{p}\right)$
for each $j$. 
Setting $p_1=\cdots =p_{d-1}=q$ and $$-\theta_1=\cdots=-\theta_{d-1}=\frac{1}{d-1}\left(1-\frac{1}{p}\right)/\left(\frac{1}{q}-1\right)$$ it follows that 
$$
\theta_d=\left(\frac{1}{q}-\frac{1}{p}\right)/\left(\frac{1}{q}-1\right).
$$
Furthermore, we have $p_d=r$ by \eqref{XXcond}. Therefore,
\eqref{revready} becomes
\begin{equation}\label{revinv}
|\omega_1\wedge\cdots\wedge\omega_d|^{\frac{1}{d-1}(1-\frac{1}{p})}\|(P_{\omega_d})_*f\|_{r}^{\left(\frac{1}{q}-\frac{1}{p}\right)/\left(\frac{1}{q}-1\right)}\leq \|f\|_p\prod_{i=1}^{d-1}\|(P_{\omega_i})_*f\|_{q}^{\frac{1}{d-1}\left(1-\frac{1}{p}\right)/\left(\frac{1}{q}-1\right)}.
\end{equation}
Raising this to a suitable power
and integrating with respect to $\omega_1,\hdots,\omega_{d-1}$ we obtain
\begin{equation}\label{nearly}
C\|Xf\|_{L^r_{v}(\langle\omega_d\rangle^\perp)}^{\frac{1}{q}-\frac{1}{p}}
\leq \|f\|_p^{\frac{1}{q}-1}\|Xf\|_{L_{\omega,v}^{q}}^{1-\frac{1}{p}},
\end{equation}
where 
\begin{equation}\label{theconstant}
C=\left(\int_{(\mathbb{S}^{d-1})^{d-1}}|\omega_1\wedge\cdots\wedge\omega_d|^{1-q}d\sigma(\omega_1)\cdots d\sigma(\omega_{d-1})\right)^{\frac{1}{d-1}\left(1-\frac{1}{p}\right)\frac{1}{q}}.
\end{equation}
By rotation-invariance this positive expression is independent of $\omega_d$, and so \eqref{XX} follows from \eqref{nearly} on taking a supremum in $\omega_d$. 
\end{proof}
\begin{remark}
The inequality \eqref{XX} captures a certain ``$L^p$-self-improving" property of $Xf$, as the simple two-dimensional example 
$$
\|Xf\|_{L^\infty_\omega L_v^{\frac{4}{3}}}^2\lesssim\|f\|_2\|Xf\|_{L^{\frac{2}{3}}_{\omega,v}}
$$
illustrates.
On a superficial level these inequalities are reminiscent of Nash's inequality
$$
\|f\|_{L^2(\mathbb{R}^d)}^{1+\frac{2}{d}}\lesssim \|f\|_{L^1(\mathbb{R}^d)}^{\frac{2}{d}}\|(-\Delta)^{\frac{1}{2}}f\|_{L^2(\mathbb{R})},
$$
which may be written in terms of the X-ray transform as
$$
\|(-\Delta_v)^{\frac{1}{4}}Xf\|_{L^2_{\omega,v}}^{1+\frac{1}{d}}\lesssim\|f\|_{L^1(\mathbb{R}^d)}^{\frac{2}{d}}\||(-\Delta_v)^{\frac{3}{4}}Xf\|_{L^2_{\omega,v}}.
$$
\end{remark}

\begin{remark}
The constant $C$ in the statement of Theorem \ref{theoremXX} may be evaluated somewhat explicitly. From the proof of Theorem \ref{theoremXX} we have
\begin{equation}\label{constant}
C^{(d-1)p'q}=\int_{(\mathbb{S}^{d-1})^{d}}|\omega_1\wedge\cdots\wedge\omega_d|^{a}d\sigma(\omega_1)\cdots d\sigma(\omega_{d-1})
\end{equation}
where $a=1-q$. Consider the gaussian integral
\begin{equation}\label{gauss-int}
 \int_{(\R^d)^d} |x_1 \wedge \dots \wedge x_d|^a e^{-\pi(|x_1|^2 + \dots + |x_d|^2)}\ dx_1 \dots dx_d.
\end{equation}
On the one hand, by polar coordinates in each of the variables $x_1,\dots,x_d \in \R^{d}$, together with \eqref{constant} and the standard identity
\begin{equation}\label{rp}
 \int_0^\infty r^{\alpha-1} e^{-\pi r^2}\ dr = \frac{\Gamma(\frac{\alpha}{2})}{2\pi^{\frac{\alpha}{2}}} 
\end{equation}
for any $\alpha>0$, we see that the integral \eqref{gauss-int} evaluates to
$$ C^{(d-1)p'q} \left( \frac{\Gamma(\frac{d+a}{2})}{\pi^{\frac{a}{2}}\Gamma(\frac{d}{2})}\right)^d.$$
On the other hand, we have the base times height formula
$$|x_1\wedge\cdots\wedge x_d| = \prod_{\ell=0}^{d-1} \mathrm{dist}( x_{\ell+1}, \mathrm{span}( x_1,\dots,x_\ell) ).$$
For any $\ell$-dimensional subspace $V$ of $\R^d$, a polar decomposition in $V$ and $V^\perp$ separately using \eqref{rp} reveals that
$$ \frac{\int_{\R^d} \mathrm{dist}(x_{\ell+1}, V)^a e^{-\pi |x_{\ell+1}|^2}\ dx_{\ell+1}}{\int_{\R^d} e^{-\pi |x_{\ell+1}|^2}\ dx_{\ell+1}}
= \pi^{-\frac{a}{2}} \frac{\Gamma(\frac{d-\ell+a}{2})}{\Gamma(\frac{d-\ell}{2})}.$$
As is well known, the denominator on the left-hand side is equal to $1$.  Setting $V = \mathrm{span}( x_1,\dots,x_\ell)$ and using the Fubini--Tonelli theorem repeatedly (noting that $x_1,\dots,x_d$ are linearly independent outside of a set of measure zero), we conclude that \eqref{gauss-int} also evaluates to
$$ \prod_{\ell=0}^{d-1} \pi^{-\frac{a}{2}} \frac{\Gamma(\frac{d-\ell+a}{2})}{\Gamma(\frac{d-\ell}{2})}.$$
Comparing the two formulae, we conclude that
\begin{equation}\label{constant-2}
 C = \left( \prod_{\ell=0}^{d-1}  \frac{\Gamma(\frac{d}{2})\Gamma(\frac{d-\ell+1-q}{2})}{\Gamma(\frac{d+1-q}{2}) \Gamma(\frac{d-\ell}{2})} \right)^{\frac{1}{d-1}\left(1-\frac{1}{p}\right)\frac{1}{q}}.
\end{equation}
This formula is implicit in \cite{Miles1971}, where expressions for the (ostensibly integer) moments of volumes of various random simplices in $\mathbb{R}^d$ are provided; see Theorem 2 and its proof.
\end{remark}

\begin{remark}
Theorem \ref{theoremXX} may be generalized to the setting of the $k$-plane transform for all $1\leq k\leq d-1$, although now the expression for the constant $C$ is in general less explicit, involving an integral of a negative power of the Brascamp--Lieb constant 
\eqref{mysteryconstant}
with respect to the $k$-dimensional subspaces $\pi_1,\hdots,\pi_{n}$. That this constant $C$ is positive at least follows from \eqref{finn} and the stability theory of Brascamp--Lieb constants \cite{BBCF2017}. We leave the details to the interested reader. 
\end{remark}

As may be expected, the arguments of this section may be combined with the adjoints of more general inequalities of Brascamp--Lieb type, leading to lower bounds on other tomographic transforms. In Section \ref{Section:nonlinear} we illustrate this with an example related to the spherical Radon transform.


\section{Nonlinear analogues}\label{Section:nonlinear}
The arguments that we use to establish the adjoint Brascamp--Lieb inequalities in Section \ref{adj-sec} make no specific use of the linear or algebraic structure of the underlying measure spaces. This is already aparent in Section \ref{Section:loomis}, where the adjoint Finner--Brascamp--Lieb inequality is presented in the setting of general (product) measure spaces. As a result various generalisations of the Brascamp--Lieb inequality \eqref{blc} known for functions defined on manifolds, or for nonlinear maps $B_j$, may be seen to admit such adjoints. In this section we formulate our adjoint Brascamp--Lieb inequality in a suitably abstract setting and illustrate its applicability with some examples.

Let $(\Omega,\mathcal{S},\mu)$ and $(\Omega_i,\mathcal{S}_i,\mu_i)$ be measure spaces and suppose that $\phi_i:\Omega\rightarrow\Omega_i$ is a measurable function for each $1\leq i\leq k$. 
Suppose further that the pushforward of the measure $\mu$ by $\phi_i$ is absolutely continuous with respect to $\mu_i$ for each $i$.
For a nonnegative measurable function $f:\Omega\rightarrow\mathbb{R}$ we may then define its pushforward $(\phi_i)_*f:\Omega_i\rightarrow\mathbb{R}$ via the formula
\begin{equation}\label{abstractpush}
\int_{\Omega_i}(\phi_i)_*f(x_i)g(x_i)d\mu_i(x_i)=\int_{\Omega}f(x)g(\phi_i(x))d\mu(x).
\end{equation}

\begin{theorem}[Abstract adjoint inequality]\label{theorem:abstract}
Suppose $0<p\leq 1$ and the exponents $p_i$ satisfy \eqref{pi-def}. If there exists a finite constant $\mathcal{C}$ such that
\begin{equation}\label{abstractBL}
\int_\Omega\prod_{i=1}^kf_i(\phi_i(x))^{c_i}d\mu(x)\leq \mathcal{C}\prod_{i=1}^k\left(\int_{\Omega_j}f_i(x_i)d\mu_i(x_i)\right)^{c_i}
\end{equation}
for all nonnegative functions $f_i$ on $\Omega_i$, then
\begin{equation}\label{abstractABL}
\|f\|_{L^p(d\mu)}\leq \mathcal{C}^{\frac{1}{p}-1}\prod_{i=1}^k\|(\phi_i)_*f\|_{L^{p_i}(d\mu_i)}^{\theta_i}
\end{equation}
for all nonnegative functions $f$ on $\Omega$.
\end{theorem}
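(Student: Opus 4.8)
The plan is simply to run the argument of Section \ref{adj-sec} that established \eqref{lo}, observing that it nowhere uses the linear (or group) structure of the $B_i$: the only features it exploits are the defining duality \eqref{adj-def} of the pushforward --- which is exactly the hypothesis \eqref{abstractpush} here --- and the forward Brascamp--Lieb inequality \eqref{blc}, which is now assumed in the form \eqref{abstractBL}. Thus the proof is essentially a transcription, and I expect no genuinely new difficulty.

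In more detail: first I would reduce to the case where every factor $\|(\phi_i)_*f\|_{L^{p_i}(d\mu_i)}$ is positive and finite, the inequality being trivial otherwise. Writing $f_i \coloneqq (\phi_i)_*f$ and $g_i \coloneqq f_i^{c_i p_i}$, the relation \eqref{pi-def} gives the pointwise identity $f_i^{p_i} = f_i g_i^{-(1-p)/(\theta_i p)}$ on $\{f_i \neq 0\}$; integrating over $\Omega_i$ and then pulling back through \eqref{abstractpush} (applied with the test function $g = g_i^{-(1-p)/(\theta_i p)}$) yields
$$ \int_\Omega f(x)\, g_i(\phi_i(x))^{-\frac{1-p}{\theta_i p}}\,d\mu(x) = \|f_i\|_{L^{p_i}(d\mu_i)}^{p_i} $$
for each $i$, with the usual convention that the integrand vanishes where $f=0$. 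Feeding $g_i$ into \eqref{abstractBL} gives
$$ \int_\Omega \prod_{i=1}^k g_i(\phi_i(x))\,d\mu(x) \leq \mathcal{C} \prod_{i=1}^k \|f_i\|_{L^{p_i}(d\mu_i)}^{c_i p_i}. $$
Now I would apply the $(k+1)$-linear H\"older inequality with exponents $\theta_1 p,\dots,\theta_k p, 1-p$ (legitimate since these sum to $1$ by $\theta_1+\dots+\theta_k=1$), raising the $k$ identities to the powers $\theta_i p$ and the last inequality to the power $1-p$; since the product of all the integrands over $\Omega$ collapses to $f^p$, this produces
$$ \int_\Omega f^p\,d\mu \leq \mathcal{C}^{1-p} \prod_{i=1}^k \|f_i\|_{L^{p_i}(d\mu_i)}^{\theta_i p p_i + (1-p) c_i p_i}, $$
and the identity $\theta_i p p_i + (1-p) c_i p_i = \theta_i p$ (a one-line consequence of \eqref{pi-def}), followed by taking $p$-th roots, gives \eqref{abstractABL}.

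The only step deserving any attention --- and the closest thing to an obstacle --- is purely measure-theoretic: one must confirm that $(\phi_i)_*f$ is a well-defined function in $L^1(d\mu_i)$ (this is where the absolute continuity of $(\phi_i)_*\mu$ with respect to $\mu_i$ enters) and that the substitution into \eqref{abstractpush} is valid for the non-negative, possibly unbounded test functions $g_i^{-(1-p)/(\theta_i p)}$ with the stated vanishing convention on $\{f=0\}$. This is handled exactly as in Section \ref{adj-sec}, by first discarding the harmless set where $f$ or some $f_i$ vanishes and applying monotone convergence; no structural hypotheses on $\Omega,\Omega_i,\phi_i$ beyond those already assumed are needed.
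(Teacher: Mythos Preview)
Your proposal is correct and follows exactly the approach the paper intends: the paper does not give a detailed proof but simply states that it is ``a routine adaptation of the arguments in Sections \ref{Section:loomis} and \ref{adj-sec}, and is left to the reader.'' Your transcription of the Section \ref{adj-sec} argument, with \eqref{adj-def} replaced by \eqref{abstractpush} and \eqref{blc} by \eqref{abstractBL}, is precisely that adaptation.
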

The proof of this theorem is a routine adaptation of the arguments in Sections \ref{Section:loomis} and \ref{adj-sec}, and is left to the reader.

Applying Theorem \ref{theorem:abstract} to the \emph{nonlinear Brascamp--Lieb inequality} of \cite{BBBCF2020} we obtain the following.
\begin{corollary} If $B_i:\mathbb{R}^d\rightarrow\mathbb{R}^{d_i}$ are smooth submersions in a neighbourhood of a point $x_0\in\mathbb{R}^d$ and $\varepsilon>0$, then there exists $\delta>0$ such that
\begin{equation}\label{nonlinearABL}
\|f\|_{L^p(B(x_0,\delta))}\leq (1+\varepsilon)\BL(\mathrm{d}\mathbf{B}(x_0),\mathbf{c})^{\frac{1}{p}-1}\prod_{i=1}^k\|(B_i)_*f\|_{L^{p_i}(\mathbb{R}^{d_i})}^{\theta_i}
\end{equation}
for all nonnegative measurable functions $f$ on $\mathbb{R}^d$.
\end{corollary}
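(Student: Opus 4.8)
The plan is to deduce the corollary directly from the abstract adjoint inequality, Theorem~\ref{theorem:abstract}, taking as the input Brascamp--Lieb inequality \eqref{abstractBL} the \emph{local} nonlinear Brascamp--Lieb inequality of \cite{BBBCF2020}. We may assume $\BL(\mathrm{d}\mathbf{B}(x_0),\mathbf{c})<\infty$, as otherwise \eqref{nonlinearABL} is vacuous. Fix $\varepsilon>0$ and let $\varepsilon'>0$ be a small auxiliary parameter to be chosen at the end. By the nonlinear Brascamp--Lieb theorem of \cite{BBBCF2020}, there is $\delta_0>0$ such that
\begin{equation*}
\int_{B(x_0,\delta)}\prod_{i=1}^k f_i(B_i(x))^{c_i}\,dx\leq (1+\varepsilon')\,\BL(\mathrm{d}\mathbf{B}(x_0),\mathbf{c})\prod_{i=1}^k\Bigl(\int_{\mathbb{R}^{d_i}}f_i\Bigr)^{c_i}
\end{equation*}
for all $0<\delta\leq\delta_0$ and all nonnegative $f_i$ on $\mathbb{R}^{d_i}$.

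Next I would verify the measure-theoretic hypotheses of Theorem~\ref{theorem:abstract} with $\Omega=B(x_0,\delta)$ carrying Lebesgue measure, $\Omega_i=\mathbb{R}^{d_i}$ carrying Lebesgue measure, and $\phi_i=B_i|_{B(x_0,\delta)}$. Since each $B_i$ is a submersion in a neighbourhood of $x_0$, after shrinking $\delta_0$ we may assume $\mathrm{d}B_i(x)$ has full rank $d_i$ for every $x\in B(x_0,\delta_0)$ and every $i$; the coarea formula then expresses $(\phi_i)_*f$ as the integral of $f$ over the fibres $B_i^{-1}(\{y\})\cap B(x_0,\delta)$ against the reciprocal of the (strictly positive, smooth) coarea Jacobian, in exact analogy with \eqref{pushform}. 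In particular the pushforward of Lebesgue measure on $B(x_0,\delta)$ by $\phi_i$ is absolutely continuous with respect to Lebesgue measure on $\mathbb{R}^{d_i}$, so Theorem~\ref{theorem:abstract} applies with $\mathcal{C}=(1+\varepsilon')\BL(\mathrm{d}\mathbf{B}(x_0),\mathbf{c})$, giving
\begin{equation*}
\|f\|_{L^p(B(x_0,\delta))}\leq\bigl((1+\varepsilon')\BL(\mathrm{d}\mathbf{B}(x_0),\mathbf{c})\bigr)^{\frac{1}{p}-1}\prod_{i=1}^k\|(\phi_i)_*f\|_{L^{p_i}(\mathbb{R}^{d_i})}^{\theta_i}.
\end{equation*}

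To finish, note that for nonnegative $f$ one has $(\phi_i)_*f\leq (B_i)_*f$ pointwise, since the fibrewise integral over $B_i^{-1}(\{y\})\cap B(x_0,\delta)$ is dominated by that over all of $B_i^{-1}(\{y\})$; hence the right-hand side only increases upon replacing $(\phi_i)_*f$ by $(B_i)_*f$. Because $0<p\leq 1$ we have $\frac{1}{p}-1\geq 0$, so $(1+\varepsilon')^{1/p-1}\to 1$ as $\varepsilon'\to 0$; choosing $\varepsilon'$ small enough that $(1+\varepsilon')^{1/p-1}\leq 1+\varepsilon$ and then taking $\delta=\delta_0$ (for this $\varepsilon'$) yields \eqref{nonlinearABL}. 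I do not anticipate a genuine obstacle: the whole argument is bookkeeping, and the only steps meriting care are (a) invoking the precise local form of the nonlinear Brascamp--Lieb inequality of \cite{BBBCF2020} — in particular that the constant is $\BL$ of the linearisation $\mathrm{d}\mathbf{B}(x_0)$ with an arbitrarily small multiplicative loss on a sufficiently small ball — and (b) checking absolute continuity of the pushforward measure so that $(\phi_i)_*f$ is an honest $L^{p_i}$ function and Theorem~\ref{theorem:abstract} legitimately applies, both of which follow at once from the submersion hypothesis via the coarea formula.
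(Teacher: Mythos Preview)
Your proposal is correct and follows precisely the approach the paper intends: the paper simply states that the corollary is obtained by ``applying Theorem~\ref{theorem:abstract} to the nonlinear Brascamp--Lieb inequality of \cite{BBBCF2020}'' and gives no further details. You have supplied exactly those details --- invoking the local nonlinear Brascamp--Lieb inequality with constant $(1+\varepsilon')\BL(\mathrm{d}\mathbf{B}(x_0),\mathbf{c})$, checking the absolute-continuity hypothesis via the coarea formula, bounding the local pushforward $(\phi_i)_*f$ by the global one, and adjusting $\varepsilon'$ --- so there is nothing to add.
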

The inequality \eqref{nonlinearABL} is of course local in nature and may be viewed as a perturbation of the adjoint Brascamp--Lieb inequality \eqref{fd} where the underlying euclidean spaces are effectively replaced with smooth manifolds, and the linear surjections $B_i$ replaced with smooth submersions in a neighbourhood of a point. In certain situations, where the manifolds (and mappings $B_i$) are suitably symmetric, global statements with sharp constants are also available -- see for example \cite{Bramati2019}, \cite{BCEM2006}, \cite{BCELM2010}. A notable example arises from the analogue of Young's inequality on the sphere due to Carlen, Lieb and Loss \cite{CLL2004}. For $d\geq 2$ this states that 
\begin{equation}\label{cllinequality}
\int_{\mathbb{S}^{d-1}}\prod_{i=1}^d f_i(x_i)^{\frac{1}{2}}d\sigma(x)\leq\prod_{i=1}^d\left(\int_{-1}^1 f_i(t)d\nu(t)\right)^{\frac{1}{2}},
\end{equation}
where $d\sigma$ is the normalised surface measure on $\mathbb{S}^{d-1}$ and $d\nu$ is the (common) pushforward of $d\sigma$ by the spherical coordinate maps $\pi_i(x):=x_i$; explicitly, 
$d\nu(t)=\frac{|\mathbb{S}^{d-2}|}{|\mathbb{S}^{d-1}|}(1-t^2)^{\frac{d-3}{2}}dt$. This inequality is sharp, and equality holds if and only if each $f_i$ is a constant function. In addition to being global, \eqref{cllinequality} differs from \eqref{nonlinearABL} in that the curvature of the underlying manifold plays a role. 
Applying Theorem \ref{theorem:abstract} to \eqref{cllinequality} we obtain the following.

\begin{corollary}[Adjoint Carlen--Lieb--Loss]\label{adjcll}
Suppose that $0<p\leq 1$ and $\theta_1,\hdots,\theta_d$ are positive and sum to $1$. If $p_1,\hdots, p_d$ are such that $\theta_i\left(1-\frac{1}{p_i}\right)=\frac{1}{2}\left(1-\frac{1}{p}\right)$ for all $1\leq i\leq d$, then
\begin{equation}\label{acll}
\|f\|_{L^p(d\sigma)}\leq\prod_{i=1}^d\|(\pi_i)_*f\|_{L^{p_i}(d\nu)}^{\theta_i}
\end{equation}
for all nonnegative measurable functions $f$ on $\mathbb{S}^{d-1}$.
\end{corollary}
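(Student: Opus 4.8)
The plan is to deduce \eqref{acll} directly from the abstract adjoint inequality of Theorem \ref{theorem:abstract}, with the sharp spherical Young inequality \eqref{cllinequality} of Carlen, Lieb and Loss playing the role of the forward inequality \eqref{abstractBL}. First I would set up the dictionary: take $\Omega=\mathbb{S}^{d-1}$ equipped with the normalised surface measure $\mu=\sigma$; take $\Omega_i=[-1,1]$ equipped with $\mu_i=\nu$ for each $1\leq i\leq d$; let $\phi_i=\pi_i$ be the spherical coordinate map $\pi_i(x)=x_i$; and put $c_i=\tfrac12$ and $\mathcal{C}=1$. With these identifications \eqref{cllinequality} is exactly the hypothesis \eqref{abstractBL} (this is where one uses $d\geq 2$), and the relation $\theta_i(1-\tfrac1{p_i})=\tfrac12(1-\tfrac1p)$ assumed in the corollary is precisely \eqref{pi-def} specialised to $c_i=\tfrac12$.

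Next I would check the one standing assumption of Theorem \ref{theorem:abstract} that is not purely formal, namely that the pushforward $(\pi_i)_*\mu$ is absolutely continuous with respect to $\mu_i$ for each $i$. But $d\nu$ was introduced above precisely as the common pushforward of $d\sigma$ by the maps $\pi_i$, so $(\pi_i)_*\sigma=\nu$ for every $i$ and the hypothesis holds trivially, with Radon--Nikodym derivative identically $1$; in particular the abstract pushforwards $(\pi_i)_*f$ of \eqref{abstractpush} are well defined for nonnegative measurable $f$ on $\mathbb{S}^{d-1}$.

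Finally, Theorem \ref{theorem:abstract} then delivers \eqref{abstractABL}, which since $\mathcal{C}=1$ (so that $\mathcal{C}^{1/p-1}=1$) is exactly the claimed inequality \eqref{acll}. I do not expect any genuine obstacle here: the whole argument is an instance of the abstract adjoint mechanism of Section \ref{Section:nonlinear}, and the only place meriting a moment's care is the absolute-continuity hypothesis on the coordinate maps, which is immediate from the construction of $d\nu$ as the common marginal of $d\sigma$. One could go further and ask about the equality cases in \eqref{acll} --- by analogy with Theorem \ref{lw-alw}(iii) one would expect rigidity forcing $f$ to be essentially constant on $\mathbb{S}^{d-1}$ --- but this is not claimed here and would require separately tracking the equality case in \eqref{cllinequality}.
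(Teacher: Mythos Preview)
Your proposal is correct and follows exactly the paper's own approach: the corollary is obtained by applying Theorem~\ref{theorem:abstract} to the Carlen--Lieb--Loss inequality \eqref{cllinequality} with $\Omega=\mathbb{S}^{d-1}$, $\Omega_i=[-1,1]$, $\phi_i=\pi_i$, $c_i=\tfrac12$ and $\mathcal{C}=1$, precisely as you describe. Your extra care in verifying the absolute-continuity hypothesis (immediate since $\nu$ is defined as the pushforward of $\sigma$ by $\pi_i$) is appropriate but not elaborated on in the paper.
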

Corollary \ref{adjcll} may be interpreted as a statement about the spherical Radon-like averaging operator
$$
\mathcal{R}f(\omega,t):=\int_{\mathbb{S}^{d-1}}f(x)d\sigma_{\omega,t}(x),
$$
which averages a function $f:\mathbb{S}^{d-1}\rightarrow\mathbb{R}^+$ on the $(d-2)$-dimensional sphere $$\mathbb{S}^{d-1}\cap\{x\in\mathbb{R}^d:\omega\cdot x=t\}$$
for each for  $\omega\in\mathbb{S}^{d-1}$ and $t\in (-1,1)$; explicitly $$d\sigma_{\omega,t}(x)=\frac{|\mathbb{S}^{d-1}|}{|\mathbb{S}^{d-2}|}(1-t^2)^{-\frac{d-3}{2}}\delta(x\cdot\omega-t)d\sigma(x).$$  
Recalling that the pushforwards $(\pi_i)_*$ are defined by \eqref{abstractpush} with measures $d\mu:=d\sigma$ and $d\mu_i:=d\nu$, it follows that
\begin{equation}\label{piav}
(\pi_i)_*f(t)=\mathcal{R}f(e_i,t)
\end{equation}
for each $1\leq i\leq d$ and $t\in (-1,1)$. In particular, and as $d\sigma$ and $d\nu$ are probability measures, we see that equality holds in \eqref{acll} when $f$ is a constant function. 

\begin{remark}  A similar procedure can be applied to the discrete Brascamp--Lieb inequality on the symmetric group $S_n$ established in \cite[(1.4)]{carlen-hadamard}; we leave the details to the interested reader.
\end{remark}

Returning to the perspective of Section \ref{Section:X-ray}, we have the following simple application of Corollary \ref{adjcll}.
\begin{theorem}\label{theoremR}
Suppose that $d\geq 2$ and that $0<p,q\leq 1$ satisfy 
\begin{equation}\label{condR}
\frac{1}{d}\left(1-\frac{1}{q}\right)\geq \frac{1}{2}\left(1-\frac{1}{p}\right).
\end{equation}
Then
\begin{equation}\label{lowerR}
\|\mathcal{R}f\|_{L^q(d(\nu\times\sigma))}\geq \|f\|_{L^p(d\sigma)}
\end{equation}
for all nonnegative measurable functions $f$ on $\mathbb{S}^{d-1}$.
\end{theorem}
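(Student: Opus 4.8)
The plan is to follow the proof of Theorem~\ref{theoremX} almost verbatim, with the adjoint Carlen--Lieb--Loss inequality of Corollary~\ref{adjcll} playing the role of the adjoint Loomis--Whitney inequality and the spherical coordinate maps $\pi_i$ playing the role of the projections $P_{\omega_i}$. First I would introduce the exponent $p_*$ defined by the \emph{equality}
$$\frac1d\left(1-\frac{1}{p_*}\right)=\frac12\left(1-\frac1p\right),$$
check the elementary fact that $0<p_*\leq 1$ (it follows since $p\leq 1$ forces $\frac{1}{p_*}=1-\frac d2+\frac{d}{2p}\geq 1$), and observe that the hypothesis \eqref{condR} says exactly that $q\geq p_*$. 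Since $\nu\times\sigma$ is a probability measure, one has $\|\mathcal{R}f\|_{L^q(d(\nu\times\sigma))}\geq\|\mathcal{R}f\|_{L^{p_*}(d(\nu\times\sigma))}$ whenever $q\geq p_*$, so it suffices to prove \eqref{lowerR} in the endpoint case $q=p_*$, with $C=1$.

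For the endpoint, I would apply Corollary~\ref{adjcll} with $\theta_1=\dots=\theta_d=\frac1d$, so that each $p_i$ equals $p_*$; this gives $\|f\|_{L^p(d\sigma)}\leq\prod_{i=1}^d\|(\pi_i)_*f\|_{L^{p_*}(d\nu)}^{1/d}$, hence by \eqref{piav}
$$\|f\|_{L^p(d\sigma)}\leq\prod_{i=1}^d\|\mathcal{R}f(e_i,\cdot)\|_{L^{p_*}(d\nu)}^{1/d}.$$
Replacing $f$ by $f\circ\rho$ for an arbitrary rotation $\rho\in\mathrm{SO}(d)$ and using that $\sigma$ is rotation-invariant -- so that $\|f\circ\rho\|_{L^p(d\sigma)}=\|f\|_{L^p(d\sigma)}$ while $(\pi_i)_*(f\circ\rho)(t)=\mathcal{R}f(\rho e_i,t)$ -- yields the rotated inequality $\|f\|_{L^p(d\sigma)}\leq\prod_{i=1}^d\|\mathcal{R}f(\rho e_i,\cdot)\|_{L^{p_*}(d\nu)}^{1/d}$. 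The AM--GM inequality then gives
$$\|f\|_{L^p(d\sigma)}^{p_*}\leq\frac1d\sum_{i=1}^d\|\mathcal{R}f(\rho e_i,\cdot)\|_{L^{p_*}(d\nu)}^{p_*}$$
for every $\rho\in\mathrm{SO}(d)$. Finally I would integrate this over $\rho\in\mathrm{SO}(d)$ against Haar probability measure; since the pushforward of Haar measure under $\rho\mapsto\rho e_i$ is $\sigma$ for each $i$, the right-hand side collapses to $\int_{\mathbb{S}^{d-1}}\|\mathcal{R}f(\omega,\cdot)\|_{L^{p_*}(d\nu)}^{p_*}\,d\sigma(\omega)=\|\mathcal{R}f\|_{L^{p_*}(d(\nu\times\sigma))}^{p_*}$, which is the desired endpoint bound.

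I do not anticipate a genuine obstacle here: the argument is a direct transcription of the proof of Theorem~\ref{theoremX}, and beyond Corollary~\ref{adjcll} the only inputs are the rotation-invariance of $\sigma$ (used both to obtain the rotated adjoint inequality and to evaluate the $\mathrm{SO}(d)$-average) and the monotonicity of $L^r$-norms on a probability space. The one bookkeeping point to check carefully is the identity $(\pi_i)_*(f\circ\rho)(t)=\mathcal{R}f(\rho e_i,t)$, which follows from \eqref{piav} together with $\mathcal{R}f(\rho e_i,t)=\mathcal{R}(f\circ\rho)(e_i,t)$, itself just the substitution $x\mapsto\rho x$ in the definition of $\mathcal{R}$. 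It is also worth remarking that, in contrast with Theorem~\ref{theoremX} where \eqref{Xcond} must hold with equality, here \eqref{condR} may be an inequality precisely because the relevant measures are probabilities, so passing to a larger target exponent $q\geq p_*$ only strengthens the estimate.
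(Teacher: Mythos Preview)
Your proposal is correct and follows essentially the same route as the paper's proof: reduce to the endpoint exponent via monotonicity of $L^r$ norms on a probability space (the paper phrases this as ``by H\"older's inequality''), apply the rotated adjoint Carlen--Lieb--Loss inequality, use AM--GM, and average over rotations. Your formulation in terms of integrating over $\mathrm{SO}(d)$ against Haar measure is in fact a cleaner way to say ``average over all orthonormal bases,'' and your verification of $(\pi_i)_*(f\circ\rho)(t)=\mathcal{R}f(\rho e_i,t)$ is exactly the bookkeeping the paper sweeps into its equation~\eqref{link}.
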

\begin{proof}
By H\"older's inequality it suffices to establish \eqref{lowerR} on the line 
\begin{equation}\label{line}
\frac{1}{d}\left(1-\frac{1}{q}\right)= \frac{1}{2}\left(1-\frac{1}{p}\right).
\end{equation}
For $x, \omega\in\mathbb{S}^{d-1}$ let $P_\omega x=x\cdot\omega$, the component of $x$ in the direction $\omega$. We caution that this notation differs from that in Section \ref{Section:X-ray}. Viewing $P_\omega$ as a mapping from $\mathbb{S}^{d-1}$ to $[-1,1]$, it follows from \eqref{piav} and a rotation that
\begin{equation}\label{link}
\mathcal{R}f(\omega,t)=(P_\omega)_*f(t)
\end{equation}
for each $t\in (-1,1)$.
Now, for each orthonormal basis $\{\omega_1,\hdots,\omega_d\}$ of $\mathbb{R}^d$ we have that
\begin{equation}\label{acllinv}
\|f\|_{L^p(d\sigma)}\leq\prod_{i=1}^d\|(P_{\omega_i})_*f\|_{L^{q}(d\nu)}^{\frac{1}{d}}
\end{equation}
by applying a suitable rotation to \eqref{acll}. Here we have also used \eqref{line}. By the AM-GM inequality it follows that
$$
\|f\|_{L^p(d\sigma)}^q\leq\frac{1}{d}\sum_{i=1}^d\|(P_{\omega_i})_*f\|_{L^{q}(d\nu)}^q.
$$
The inequality \eqref{lowerR} now follows on applying \eqref{link} and averaging over all choices of orthonormal basis $\{\omega_1,\hdots,\omega_d\}$.
\end{proof}
There are additional nonlinear Brascamp--Lieb inequalities not covered by the results of \cite{BBBCF2020}, in which the underlying linear Brascamp--Lieb constant is infinite, but the nonlinear bound still holds due to higher order nondegeneracy conditions.  In the case when $k=2$ and $d_1,d_2 = d-1$, for instance, estimates of this form (with the optimal range of exponents $c_1,c_2$) were established by Tao--Wright \cite{TW2002} and Stovall \cite{Stovall2011} (being closely related to $L^p$ improving estimates for Radon transforms along curves), and could be inserted into Theorem \ref{theorem:abstract} to obtain corresponding adjoint inequalities.  Similarly for the algebraic Brascamp--Lieb inequalities of Duncan \cite{duncan}.  We leave the details of these adjoint inequalities to the interested reader.


\section{Open questions}\label{Section:questions}
\begin{question}[Improved H\"older inequality for Gowers norms]  Does \eqref{fu2} hold for some $\eps>0$?
\end{question}

\begin{question}[Gaussian-stable data] What are the necessary and sufficient conditions for a Brascamp--Lieb datum $(\mathbf{B},\mathbf{c})$ to be gaussian-stable in the sense of Definition \ref{stability-def}?
\end{question}

\begin{question}[Lower bounds on restricted $k$-plane transforms]\label{mysteryBLconstant}
What is the natural generalization of Theorem \ref{theoremXE} to the $k$-plane transform?
\end{question}

\begin{question}[Sharp $k$-plane transform estimates]
Might we expect an explicit sharp form of Theorem \ref{theoremTk} in the spirit of the conjectures of Baernstein and Loss \cite{Baer1997}?
\end{question}

\begin{question}[A nonlinear adjoint Brascamp--Lieb inequality]
Can \eqref{nonlinearABL} be strengthened to
$$
\|f\|_{L^p(B(x_0,\delta))}\leq (1+\varepsilon)\ABL(\mathrm{d}\mathbf{B}(x_0),\mathbf{c},\theta,p)\prod_{i=1}^k\|(B_i)_*f\|_{L^{p_i}(\mathbb{R}^{d_i})}^{\theta_i}?
$$
\end{question}
\begin{question}[Density of gaussian extremisable data] Is the set of gaussian extremizable Brascamp--Lieb data $(\mathbf{B},\mathbf{c})$ dense in the set of all feasible Brascamp--Lieb data? This question is motivated by Remark \ref{Remark:bcct}.
\end{question}

\begin{question}[$L^p$ version of ABBN inequality] In Section \ref{Section:entropy} it was shown that the entropy Brascamp--Lieb inequalities from \cite{CCE2009} could be viewed as the limiting case (as $p \to 1$) of the adjoint Brascamp--Lieb inequalities established in this paper.  Could the monotonicity of Shannon entropy conjectured by Lieb \cite{lieb-entropy} and established in \cite{ABBN} similarly be the limiting case of some $L^p$ inequality?
\end{question}

\begin{question}[Relation to reverse Brascamp--Lieb inequalities]  Is there any relation or analogue of the adjoint Brascamp--Lieb inequalities to the reverse Brascamp--Lieb inequality of Barthe \cite{barthe}, or more generally to the forward--reverse Brascamp--Lieb inequality of Liu--Courtade--Cuff--Verd\'{u} \cite{LCCV}?
\end{question}


\end{document}